\documentclass[11pt]{amsart}

\usepackage{amssymb,amsmath,amsthm}
\usepackage{enumerate}
\usepackage[pagebackref,colorlinks,linkcolor=red,citecolor=blue,urlcolor=blue,hypertexnames=true]{hyperref}

\setcounter{tocdepth}{3}

\DeclareMathOperator{\Ran}{Ran}
\DeclareMathOperator{\sym}{Sym}
\DeclareMathOperator{\co}{co}
\DeclareMathOperator{\Int}{int}

\renewcommand{\span}{{\rm span}}
\newcommand{\hsp}{\bullet_{\rm H}}
\renewcommand{\emptyset}{\varnothing}
\renewcommand{\setminus}{\smallsetminus}
\renewcommand{\subset}{\subseteq}

\textwidth = 6.2 in
\textheight = 8.5 in
\oddsidemargin = 0.0 in
\evensidemargin = 0.0 in
\topmargin = 0.0 in
\headheight = 0.0 in
\headsep = 0.3 in
\parskip = 0.05 in
\parindent = 0.3 in

\def\De{\Delta}
\def\Ga{\Gamma}

\linespread{1.15}

\pagenumbering{arabic}


\def\tg{g}

\def\cB{\mathcal B}

\def\Mdd{\R^{d' \times d} }
\def\CMdd{\C^{d' \times d} }
\def\Rdd{\R^{d \times d} }
\def\Cdd{{\C^{d \times d} }}
\def\SR{\mathbb S\R }
\def\mbS{\mathbb S }
\def\SRdd{\mathbb S\R^{d \times d} }
\def\SRnn{\mathbb S\R^{n \times n} }

\def\Mnns{\R^{n\times n}}
\def\Cnn{\CC^{n\times n}}

\def\T{T}

\def\bes{\begin{equation*}}
\def\ees{\end{equation*}}

\def\beq{\begin{equation} }
\def\eeq{\end{equation} }
\def\eps{\varepsilon}
\def\ep{\varepsilon}

\def\ben{\begin{enumerate} }
\def\een{\end{enumerate} }
\def\benum{\begin{enumerate} }
\def\eenum{\end{enumerate} }

\def\x{x}
\def\xs{x^*}
\def\y{y}
\newcommand{\ax}{\langle\x\rangle}
\newcommand{\axs}{\langle\x,\xs\rangle}
\newcommand{\cy}{[\y]}

\def\bmat{\left[\begin{array}{ccccccccccccccc} }
\def\emat{\end{array}\right]}

\def\bmat{\begin{bmatrix}}
\def\emat{\end{bmatrix}}
\def\beq{\begin{equation}}
\def\eeq{\end{equation}}

\def\barr{\begin{array}}
\def\earr{\end{array}}

\def\cB{\mathcal{B}}

\def\T{\ast}

\def\NN{\mathbb N}
\def\N{\mathbb N}

\def\cA{ {\mathcal A} }
\def\cB{ {\mathcal B} }
\def\cC{ {\mathcal C} }
\def\cD{ {\mathcal D} }

\def\fB{ {\mathcal B} }

\def\cH{ {\mathcal H} }

\def\cJ{ {\mathcal J}}
\def\cK{ {\mathcal K} }
\def\cL{ L }
\def\ccL{ \mathcal L }

\def\cN{ {\mathcal N} }

\def\cP{{\mathcal P}}
\def\cQ{ {\mathcal Q} }

\def\cS{{\mathcal S} }
\def\cT{{\mathcal T}}

\def\la{\lambda}

\def\tL{{\tilde L}}
\def\tcL{{\tilde\cL}}

\def\j1tog{ j= 1, \ldots, g }

\def\tA{{\widetilde A}}
\def\ta{{\tilde a}}

\def\tA{\tilde{A}}

\def\L0t{\ (L_0 \otimes I_n ) \ }

\def\PLINE1{\beta}

\newcommand{\CC}{{\mathbb C}}
\newcommand{\C}{{\mathbb C}}
\newcommand{\HH}{{\mathbb H}}
\newcommand{\K}{{\mathbb K}}
\newcommand{\RR}{{\mathbb R}}
\newcommand{\R}{{\mathbb R}}

\newcommand{\pt}{\partial}

\newtheorem{thm}{Theorem}[section]

\newtheorem{cor}[thm]{Corollary}
\newtheorem{lem}[thm]{Lemma}
\newtheorem{lemma}[thm]{Lemma}

\newtheorem{prop}[thm]{Proposition}

\theoremstyle{definition}
\newtheorem{definition}[thm]{Definition}

\theoremstyle{remark}

\newtheorem{remark}[thm]{Remark}

\numberwithin{equation}{section}

\newtheorem{exa}[thm]{Example}
\newenvironment{example}%
         {\begin{exa}}
             {{\hfill $\Box ~~$}\end{exa}}

\newcounter{Inc}

\begin{document}
\setcounter{page}{1}

\title[The matricial relaxation of an LMI]
{The matricial relaxation of a \\ linear matrix inequality}

\author[Helton]{J. William Helton${}^1$}
\address{J. William Helton, Department of Mathematics\\
  University of California \\
  San Diego}
\email{helton@math.ucsd.edu}
\thanks{${}^1$Research supported by NSF grants
DMS-0700758, DMS-0757212, and the Ford Motor Co.}

\author[Klep]{Igor Klep${}^2$}
\address{Igor Klep, Univerza v Ljubljani, Fakulteta za matematiko in fiziko \\
and
Univerza v Mariboru, Fakulteta za naravoslovje in matematiko 
}
\email{igor.klep@fmf.uni-lj.si}
\thanks{${}^2$Research supported by the Slovenian Research Agency grants 
P1-0222, P1-0288, and J1-3608.}

\author[McCullough]{Scott McCullough${}^3$}
\address{Scott McCullough, Department of Mathematics\\
  University of Florida 
   }
   \email{sam@math.ufl.edu}
\thanks{${}^3$Research supported by the NSF grant DMS-0758306.}

\subjclass[2010]{Primary 46L07, 14P10, 90C22; Secondary 11E25, 46L89, 13J30}
\date{\today}
\keywords{linear matrix inequality (LMI), completely positive,
semidefinite programming,
Positivstellensatz, Gleichstellensatz, archimedean
quadratic module,
real algebraic geometry, free positivity}

\begin{abstract}
Given linear matrix inequalities  (LMIs) $L_1$ and $L_2$ 
it is natural to ask:
\vspace{.1cm}
\ben[\rm (Q$_1$)]
\item
when does one dominate the other, that is,
does
$L_1(X)\succeq0$ imply $L_2(X)\succeq0$?
\item
when are they mutually dominant, that is, when do
 they have the same solution set? 
\een
\vspace{.1cm}

  The matrix
 cube problem of Ben-Tal and Nemirovski  \cite{B-TN}
 is an example of LMI
domination. Hence such problems can be NP-hard.
This paper describes a natural relaxation of an LMI,
based on substituting matrices for the variables $x_j$. 
With this
relaxation, the  domination questions (Q$_1$) and (Q$_2$)
have elegant answers, indeed reduce to constructible 
semidefinite programs.
As an example, to test the strength of this relaxation we specialize it
to the matrix cube problem and obtain essentially
 the relaxation given in \cite{B-TN}. 
Thus our relaxation could be viewed as generalizing it.

Assume there is an $X$ such that $L_1(X)$ and $L_2(X)$ are both positive definite, and suppose the positivity domain of $L_1$ is bounded.
For our ``matrix variable'' relaxation  
a positive answer to (Q$_1$) is equivalent to the existence
of matrices $V_j$ such that 
\beq\tag{\rm A$_1$}\label{eq:abstr}
L_2(x)=V_1^* L_1(x) V_1 + \cdots + V_\mu^* L_1(x) V_\mu.
\eeq
 As for (Q$_2$) we show that $L_1$ and $L_2$ 
  are mutually dominant if and only if, up to
 certain  redundancies described in the paper, 
 $L_1$ and $L_2$ 
 are unitarily equivalent.
Algebraic certificates for positivity,
such as \eqref{eq:abstr} for
linear polynomials,  are typically called Positivstellens\"atze.
The paper goes on to derive a  Putinar-type Positivstellensatz for 
polynomials with a cleaner and
more powerful conclusion under the stronger hypothesis
of positivity on an underlying  bounded domain of the form
$
\{X \mid L(X)\succeq0\}.
$

An observation at the core of the paper is that the relaxed LMI domination
problem is equivalent 
to a classical problem. Namely, the 
problem of determining
if a linear map $\tau$ from a subspace of matrices to a matrix algebra is
``completely positive''. Complete positivity is one of the main techniques
of modern operator theory and 
the theory of operator algebras. On one hand it provides tools for 
studying LMIs and on the other hand, since completely positive
maps are not so far from representations and generally are more
tractable than their merely positive counterparts, the theory of 
completely positive maps
provides perspective on the
difficulties in solving LMI domination problems. 
%

\end{abstract}

\maketitle

\makeatletter
\newcommand{\mycontentsbox}{%
{\centerline{NOT FOR PUBLICATION}
\addtolength{\parskip}{-2.3pt}
\small\tableofcontents}}
\def\enddoc@text{\ifx\@empty\@translators \else\@settranslators\fi
\ifx\@empty\addresses \else\@setaddresses\fi
\newpage\mycontentsbox}
\makeatother

\newpage

\section{Introduction and the statement of the main results}

In this section
we state most of our main results of the paper.
We begin with essential definitions.

\subsection{Linear pencils and LMI sets}
   For symmetric matrices $A_0,A_1,\dots,A_{g} \in \SRdd$,  the expression
\beq\label{eq:pencil}
  L(x)=A_0+\sum_{j=1}^{g} A_j x_j \in \SRdd\ax
\eeq
in noncommuting variables $x$,
  is a {\bf linear pencil}. 
If $A_0=I$, then $L$ is {\bf monic}.
If $A_0=0$, then $L$ is a {\bf truly linear pencil}.
The truly linear part $\sum_{j=1}^{g} A_j x_j$ of a linear
pencil $L$ as in \eqref{eq:pencil} will be denoted by $L^{(1)}$.

  Given
a block column matrix
  $X=\mbox{col}(X_1,\dots,X_{g}) \in (\SRnn)^{g},$
the {\bf evaluation} $L(X)$ is defined as
\beq\label{eq:trulyPencilTensor}
  L(X)=A_0\otimes I_n + \sum A_j \otimes X_j\in \mbS \R^{dn\times dn}.
\eeq
The tensor product in this expressions is the usual (Kronecker) tensor product
of matrices. We have reserved the tensor product notation for the tensor product
of matrices and have eschewed the strong temptation of using $A\otimes x_{\ell}$ in place of
$Ax_{\ell}$ when $x_{\ell}$ is one of the variables.

Let $L$ be a linear pencil.
Its {\bf matricial linear matrix inequality
(LMI) set} (also called a {\bf matricial positivity domain}) is
\beq\label{eq:posDom}
\cD_{L}:= \bigcup_{n\in\NN}\{X\in (\SRnn)^{\tg} \mid L(X) \succeq 0\}.
\eeq
Let
\begin{eqnarray}
\cD_L(n)&=&\{ X\in (\SRnn)^{\tg}\mid L(X) \succeq 0\}=\cD_L\cap
(\SRnn)^{\tg}, \\
\pt\cD_L(n)&=&\{ X\in (\SRnn)^{\tg}\mid  L(X) \succeq 0,\,
L(X)\not\succ0\}, \\
\pt\cD_L&=&\bigcup_{n\in\NN} \pt\cD_L(n).
\end{eqnarray}
The set $\cD_L(1)\subseteq\R^g$ is the feasibility set of the semidefinite
program 
$L(X)\succeq0$ and is called a {\bf spectrahedron} by algebraic geometers.

We call $\cD_L$ {\bf bounded} if there is an $N\in\NN$ with
$\|X\|\leq N$ for all $X\in \cD_L$.
We shall see later below (Proposition \ref{prop:boundedn}) that
$\cD_L$ is bounded if and only if $\cD_L(1)$ is bounded.

\subsection{Main results on LMIs}
  Here we state our main theorems giving precise algebraic
 characterizations of (matricial) LMI domination.
 While the main theme of this article is that matricial
  LMI domination problems are more tractable than their
  traditional scalar counterparts, the reader interested only
  in algorithms for the scalar setting can proceed
  to the following subsection, \S \ref{sec:algo}, 
  and then onto Section \ref{sec:comput}. 

\begin{thm}[Linear Positivstellensatz]\label{thm:intro1}
Let
$L_j\in \mbS\R^{d_j\times d_j}\ax$, $j=1,2$, 
be monic linear pencils and assume
$\cD_{\cL_1}$ is bounded.
Then $\cD_{L_1}\subseteq\cD_{L_2}$ if
and only if there is a $\mu\in\NN$ and an isometry $V\in\R^{\mu d_1\times d_2}$ such that
\beq\label{eq:linPosIntro}
L_2(x)= V^* \big( I_\mu \otimes L_1(x) \big) V.
\eeq
\end{thm}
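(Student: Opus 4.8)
The plan is to reformulate the domination $\cD_{L_1}\subseteq\cD_{L_2}$ as a complete positivity statement about a linear map on an operator system, and then apply an Arveson-type extension/dilation theorem to obtain the Stinespring-style representation \eqref{eq:linPosIntro}. Concretely, let $\cS_1:=\span\{I,A^{(1)}_1,\dots,A^{(1)}_g\}\subseteq\SR^{d_1\times d_1}$ (where $A^{(1)}_j$ are the coefficient matrices of $L_1$), and define a unital linear map $\tau\colon\cS_1\to\SR^{d_2\times d_2}$ by $\tau(I)=I$ and $\tau(A^{(1)}_j)=A^{(1)'}_j$ (the coefficients of $L_2$), extended linearly; one must first check this is well defined, which amounts to noting that a linear dependence among $I,A^{(1)}_1,\dots,A^{(1)}_g$ forces the same dependence among the $L_2$-coefficients — this is exactly where the inclusion of positivity domains, together with the nonemptiness of the interior (the point $X$ with $L_1(X),L_2(X)\succ0$) and the boundedness of $\cD_{L_1}$, is used. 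The first main step is thus: \emph{domination $\iff$ $\tau$ is completely positive.}

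The forward direction of that equivalence is the crux. Given $X=(X_1,\dots,X_n)\in\cD_{L_1}(n)$, applying $\tau\otimes\operatorname{id}_n$ to $L_1(X)=\sum A^{(1)}_j\otimes X_j+I\otimes I\succeq0$ should yield $L_2(X)\succeq0$; the difficulty is that a general positive semidefinite element of $\cS_1\otimes M_n$ need not be of the form $L_1(X)$ with $X$ symmetric, so one cannot directly conclude $(\tau\otimes\operatorname{id}_n)$ is positive from the domination hypothesis alone. The standard remedy, which I expect to be the technical heart of the argument, is to use boundedness of $\cD_{L_1}$ to pass to the \emph{archimedean} quadratic module generated by $L_1$: boundedness implies $N^2-\sum x_j^2$ lies in this module for some $N$, so the operator system $\cS_1$ is ``archimedean,'' and a Hahn-Banach separation argument on matrix states (or equivalently, on the matricial positivity cone) shows that every positive element of $\cS_1\otimes M_n$ is a limit of evaluations $L_1(X)$, $X\in\cD_{L_1}(n)$. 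This density, combined with the domination hypothesis, gives positivity of $\tau\otimes\operatorname{id}_n$ for every $n$, i.e.\ complete positivity of $\tau$.

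Once $\tau$ is completely positive and unital on the operator system $\cS_1\subseteq\SR^{d_1\times d_1}$, the second main step is purely operator-algebraic: by Arveson's extension theorem, $\tau$ extends to a unital completely positive map $\widehat\tau\colon\SR^{d_1\times d_1}\to\SR^{d_2\times d_2}$ on the full matrix algebra, and by the Stinespring representation of UCP maps on finite-dimensional $C^*$-algebras, $\widehat\tau(A)=V^*(I_\mu\otimes A)V$ for some $\mu\in\NN$ and some isometry $V\in\R^{\mu d_1\times d_2}$ (the isometry property is exactly unitality, $V^*V=\widehat\tau(I)=I$). Evaluating at $A=A^{(1)}_j$ and $A=I$ and using linearity recovers $L_2(x)=V^*(I_\mu\otimes L_1(x))V$, which is \eqref{eq:linPosIntro}. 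Finally, the converse direction is immediate: if \eqref{eq:linPosIntro} holds and $L_1(X)\succeq0$, then $L_2(X)=(V\otimes I_n)^*\big(I_\mu\otimes L_1(X)\big)(V\otimes I_n)\succeq0$, so $\cD_{L_1}\subseteq\cD_{L_2}$. I would organize the write-up so that the operator-system/complete-positivity dictionary and the Arveson--Stinespring machinery are quoted as black boxes, leaving the archimedean density lemma as the one substantive thing to prove.
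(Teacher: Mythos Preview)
Your overall architecture --- recast domination as complete positivity of the unital map $\tau:\cS_1\to\cS_2$, $A_{1,\ell}\mapsto A_{2,\ell}$, then apply Arveson extension plus Stinespring --- matches the paper exactly (Theorem~\ref{thm:tau} and Corollary~\ref{cor:tau}), and the converse direction is indeed immediate.

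However, your proposed route to complete positivity of $\tau$ has a gap. You claim that every positive element of $\cS_1^{n\times n}$ is a limit of evaluations $L_1(X)$ with $X\in\cD_{L_1}(n)$. This is false as stated: an evaluation $L_1(X)=I\otimes I_n+\sum_j A_{1,j}\otimes X_j$ always has constant block $I\otimes I_n$, whereas a general element of $\cS_1^{n\times n}$ has constant block $I\otimes\Lambda$ for an arbitrary symmetric $\Lambda$, and this is preserved under limits. There is also a circularity risk in your plan: in the paper, archimedeanity of the quadratic module generated by $L_1$ is \emph{derived from} the linear Positivstellensatz (Proposition~\ref{prop:linArch}), not used to prove it.

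The paper's argument avoids density and separation entirely. Given $T\in\cS_1^{n\times n}$ with $T\succ0$, write (after the canonical shuffle) $S=I\otimes\Lambda+\sum_j A_{1,j}\otimes X_j$. Boundedness of $\cD_{L_1}$ alone forces $\Lambda\succeq0$ (Lemma~\ref{lem:bounded34}); after a small perturbation one may take $\Lambda\succ0$. Now conjugate:
\[
(I\otimes\Lambda^{-1/2})\,S\,(I\otimes\Lambda^{-1/2})
=I\otimes I+\sum_j A_{1,j}\otimes(\Lambda^{-1/2}X_j\Lambda^{-1/2})
=L_1(X'),
\]
with $X'\in\cD_{L_1}(n)$. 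The domination hypothesis gives $L_2(X')\succeq0$; conjugating back by $I\otimes\Lambda^{1/2}$ and undoing the shuffle yields $(I_n\otimes\tau)(T)\succeq0$. So the correct replacement for your density lemma is this one-step congruence reduction to an honest evaluation.

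Two minor points. Well-definedness of $\tau$ is simpler than you suggest: boundedness of $\cD_{L_1}$ already forces $\{I,A_{1,1},\dots,A_{1,g}\}$ to be linearly independent (Proposition~\ref{prop:bounded12}(2)), with no appeal to the domination hypothesis, and the interior-point condition you invoke is automatic for monic pencils since $L_j(0)=I$. Finally, the paper keeps the Arveson--Stinespring step over $\mathbb R$, giving a self-contained construction of a \emph{real} isometry $V$; if you black-box the complex theorems you should add a word on descending to real coefficients.
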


 Suppose $L\in\mbS\R^{d\times d}\ax$,
\[
  L= I +\sum_{j=1}^g A_j x_j
\]
 is a monic linear pencil.  A subspace $\mathcal H\subseteq \mathbb R^d$
 is {\it reducing for $L$} if $\mathcal H$ reduces each
 $A_j$; i.e., if $A_j\mathcal H\subseteq \mathcal H$.  Since
 each $A_j$ is symmetric, it also follows that 
 $A_j\mathcal H^\perp \subseteq \mathcal H^\perp$.  Hence, with respect
  to the decomposition $\mathbb R^d =\mathcal H\oplus \mathcal H^\perp$,
  $L$ can be written as the direct sum,
 \[
   L = \tL \oplus \tL^\perp 
   =\begin{bmatrix} \tL & 0 \\ 0 & \tL^\perp \end{bmatrix},
\quad\text{where}\quad
  \tL= I+\sum_{j=1}^g \tA_j x_j,
\]
  and  $\tA_j$ is the restriction of $A_j$ to $\mathcal H$. 
  (The pencil $\tL^\perp$ is defined similarly.)  
  If $\mathcal H$ has dimension $\ell$, then by identifying
  $\mathcal H$ with $\mathbb R^\ell$, the pencil
  $\tL$    is a monic linear pencil of size $\ell$. 
  We say that $\tL$ is a {\it subpencil} of $L$.
  If moreover, $\cD_L=\cD_{\tL}$, then $\tL$
  is a {\it defining subpencil} and if  
  no proper subpencil of $\tL$ is defining
  subpencil for $\cD_L$, then $\tL$ is
  a {\it minimal defining $($sub$)$pencil}. 

\begin{thm}[Linear Gleichstellensatz]
 \label{thm:minimalIntro}
Let
$L_j\in \mbS\R^{d\times d}\ax$, $j=1,2$, 
be monic linear pencils with $\cD_{\cL_1}$ bounded.
Then $\cD_{\cL_1}=\cD_{\cL_2}$
if and only if 
  minimal defining pencils $\tL_1$ and $\tL_2$
for $\cD_{\cL_1}$ and $\cD_{\cL_2}$ respectively,
are unitarily equivalent. That is,
  there is a unitary  matrix
 $U$ such that 
\beq\label{eq:Intro2}
\tL_2(x)=U^* \tL_1(x) U.
\eeq 
\end{thm}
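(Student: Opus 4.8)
The plan is to derive the Gleichstellensatz from the Linear Positivstellensatz (Theorem~\ref{thm:intro1}) together with an analysis of how two isometric conjugations can be composed. First I would observe that one direction is trivial: if the minimal defining pencils $\tL_1$ and $\tL_2$ are unitarily equivalent via $U$, then $\cD_{\tL_1}=\cD_{\tL_2}$ since $\tL_2(X)=(U\otimes I)^*\tL_1(X)(U\otimes I)$ preserves positive semidefiniteness in both directions; and since $\tL_i$ is a defining subpencil, $\cD_{\cL_i}=\cD_{\tL_i}$, so $\cD_{\cL_1}=\cD_{\cL_2}$.

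For the forward direction, assume $\cD_{\cL_1}=\cD_{\cL_2}$. Passing to minimal defining subpencils $\tL_1,\tL_2$, note that boundedness of $\cD_{\cL_1}$ is inherited. Both inclusions $\cD_{\tL_1}\subseteq\cD_{\tL_2}$ and $\cD_{\tL_2}\subseteq\cD_{\tL_1}$ hold, so Theorem~\ref{thm:intro1} supplies isometries $V\in\R^{\mu d\times d}$ and $W\in\R^{\nu d\times d}$ with
\beq\label{eq:twoside}
\tL_2(x)=V^*\big(I_\mu\otimes \tL_1(x)\big)V,\qquad
\tL_1(x)=W^*\big(I_\nu\otimes \tL_2(x)\big)W.
\eeq
Substituting one into the other gives $\tL_1(x)=W^*\big(I_\nu\otimes V^*(I_\mu\otimes\tL_1(x))V\big)W = Z^*\big(I_{\mu\nu}\otimes\tL_1(x)\big)Z$ for a suitable isometry $Z$ built from $V$ and $W$ via the tensor leg-reshuffling. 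The key point is then a rigidity statement: if a monic linear pencil $\tL_1$ that is a \emph{minimal defining pencil} satisfies $\tL_1(x)=Z^*(I_k\otimes\tL_1(x))Z$ for an isometry $Z$, then $Z$ must, after an appropriate change of basis, be the inclusion of one summand, i.e.\ essentially $Z$ picks out a copy on which the compression is the identity. Concretely, writing $Z=\mathrm{col}(Z_1,\dots,Z_k)$ with $\sum Z_i^*Z_i=I$, comparison of the constant terms forces $\sum Z_i^*Z_i=I$ (automatic) and comparison of the coefficient of each $x_j$ forces $A_j=\sum Z_i^* A_j Z_i$; minimality (no proper defining subpencil) should then force each $Z_i^*Z_i$ to be a scalar multiple of a projection commuting with all the $A_j$, and ultimately that the range of $Z$ reduces $I_k\otimes\tL_1$ to a single direct summand unitarily equivalent to $\tL_1$ — this is where I expect to need a dimension/trace count plus an extreme-point or irreducibility argument on the defining pencil. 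Applying this rigidity back to \eqref{eq:twoside}: the composite being a ``self-compression'' of the minimal pencil forces $\mu=\nu=1$ effectively (on the relevant reducing subspaces), so $V$ and $W$ become genuine unitaries $U$ and $U^*$ up to the redundancies that minimality already removed, yielding \eqref{eq:Intro2}.

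The main obstacle, and the part that needs genuine work rather than bookkeeping, is the rigidity lemma: showing that a minimal defining pencil cannot be compressed onto an ``inflated'' copy of itself except trivially. The tension is that $\cD_L$ is insensitive to multiplicity and to adding summands $\tL^\perp$ with larger positivity domain, so one must pin down exactly which invariant subspaces of the $A_j$'s are forced by $\cD_L$; the definition of minimal defining pencil is precisely engineered to strip these away, so the argument will hinge on translating ``$\cH$ is reducing and $\cD_{\tL|_\cH}=\cD_L$'' into a statement about the structure of $Z$. I would expect to prove this by: (i) using boundedness to get an interior point $X^0$ with $\tL_1(X^0)\succ0$ (guaranteed by the standing hypothesis that some $X$ makes both pencils positive definite) so that evaluations are plentiful; (ii) analyzing the equality $\tL_1(X)=Z^*(I_k\otimes\tL_1(X))Z$ at such $X$ to conclude $Z$ is (block-diagonalizably) an isometry onto a reducing subspace; and (iii) invoking minimality to collapse multiplicities. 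Once this lemma is in hand, the theorem follows by assembling the two applications of Theorem~\ref{thm:intro1} as above.
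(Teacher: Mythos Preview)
Your approach differs substantially from the paper's, and the gap is precisely where you flag it: the rigidity lemma. The paper does not attempt a direct compression argument from the two Positivstellensatz certificates. Instead it observes (Theorem~\ref{thm:tau}(3)) that $\cD_{L_1}=\cD_{L_2}$ makes the linear map $\tau:A_{1,\ell}\mapsto A_{2,\ell}$ a complete isometry, then invokes Arveson's theorem \cite[Theorem 2.2.5]{Arv1} that a complete isometry between operator systems with trivial \v Silov ideal is induced by a $*$-isomorphism of the generated $C^*$-algebras, and finally uses the structure of finite-dimensional real $C^*$-algebras (Propositions~\ref{prop:fdimcstar} and~\ref{prop:realIso}, Remark~\ref{rem:realComplIso}) to see that this $*$-isomorphism is unitarily implemented. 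Minimality enters through Proposition~\ref{prop:minimal}, which characterizes minimal pencils as exactly those whose \v Silov ideal vanishes and whose minimal reducing projections are all central.

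Your rigidity lemma is essentially the unique extension property for the identity representation of $C^*(\cS_1)$: the claim that any unital completely positive map on $\R^{d\times d}$ fixing $\cS_1$ pointwise must fix $C^*(\cS_1)$. This is Arveson's boundary-representation condition, and it holds precisely when the \v Silov ideal is trivial; your sketch (i)--(iii) does not prove it. Knowing $\sum Z_i^*A_jZ_i=A_j$ and $\sum Z_i^*Z_i=I$ does not force the $Z_i^*Z_i$ to commute with the $A_j$, and ``no proper reducing subspace defines the same $\cD_L$'' does not directly constrain arbitrary completely positive self-maps. There is also a second gap: even granting rigidity for the composite $Z=(I_\nu\otimes V)W$, you have not explained how to conclude that $V$ itself is (restricts to) a unitary; the self-map being the identity does not by itself force the individual factors to be multiplicative. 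The paper sidesteps both issues by working with the single completely isometric $\tau$ rather than two completely positive maps, and by importing the \v Silov-ideal machinery rather than reproving it.
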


An observation at the core of these results is that the relaxed LMI domination
problem is equivalent 
to the 
problem of determining
if a linear map $\tau$ from a subspace of matrices to a matrix algebra is
{\em completely positive}.

\subsection{Algorithms for LMIs}\label{sec:algo}

Of widespread interest is determining if 
\beq\label{eq:Q1}
\cD_{L_1}(1)
\subseteq
\cD_{L_2}(1),
\eeq
or if $\cD_{L_1}(1)=\cD_{L_2}(1)$.
For example, the paper of Ben-Tal and Nemirovski 
\cite{B-TN} exhibits simple cases where determining this 
is NP-hard.
We explicitly give (in Section \ref{subsec:incl})
 a certain semidefinite program 
whose feasibility is equivalent to
$\cD_{L_1}\subseteq\cD_{L_2}$. 
Of course,
if $\cD_{L_1}\subseteq\cD_{L_2}$, then
$\cD_{L_1}(1)\subseteq\cD_{L_2}(1)$. Thus our algorithm is a
type of
relaxation of the problem \eqref{eq:Q1}.
The algorithms in this section can be read immediately
after reading Section \ref{sec:algo}.

We also have an SDP algorithm (Section \ref{subsec:bd}) easily 
adapted from the first to determine
if  $\cD_L$ is bounded, and what its ``radius'' is.
Proposition \ref{prop:boundedn} shows that $\cD_L$ is bounded
if and only if $\cD_L(1)$ is bounded.
Thus our algorithm definitively tells if $\cD_L(1)$ is a bounded set;
in addition it  yields an upper bound on the radius of $\cD_L(1)$.

In Section \ref{subsec:cube} we specialize our
relaxation to solve a matricial relaxation of the classical matrix
cube problem, finding the biggest matrix cube contained
in $\cD_L$. 
It turns out, as shown in 
Section \ref{sec:more-cube}, that our matricial relaxation
 is essentially  that of \cite{B-TN}.  
Thus the our LMI inclusion relaxation could be viewed as a
 generalization
of theirs, indeed a highly canonical one, 
in light of the precise correspondence to
classical complete positivity theory shown in \S \ref{sec:arv}. 
A potential  advantage  of our relaxation
is that there are possibilities for strengthening  it,
presented generally in Section \ref{sec:reduce} and
illustrated on the matrix cube in  Section \ref{sec:not-practical}.
 
 Finally, 
given a matricial LMI set $\cD_L$,
Section \ref{subsec:silov} gives an algorithm to compute the
linear pencil $\tL\in\mbS\Rdd\ax$ with smallest possible $d$ satisfying
$\cD_L=\cD_\tL$.

\subsection{Positivstellensatz}
Algebraic characterizations   of  polynomials $p$ which are positive
on $\cD_{L}$ are called Positivstellens\"atze 
and are classical for polynomials
on $\RR^g$.
This theory underlies
the main approach currently used for global optimization
of polynomials,
cf.~\cite{Las,Par}.
The generally noncommutative techniques in this
paper lead to 
a cleaner and more powerful commutative 
Putinar-type Positivstellensatz \cite{Put} for $p$ strictly 
positive on a bounded
spectrahedron $\cD_L(1)$.  In the theorem
which follows, $\SRdd\cy$ is the set  of symmetric
$d\times d$ matrices with entries from
$\mathbb R[y]$, the algebra of (commutative)
polynomials with coefficients from $\mathbb R$.
Note that an element of $\SRdd\cy$ may be identified
with a polynomial (in commuting variables)
with coefficients from $\SRdd$. 

\begin{thm}\label{thm:commposIntro}
Suppose $L\in\SRdd\cy$ is a monic linear pencil
and $\cD_\cL(1)$ is bounded. Then for every symmetric matrix polynomial
$p\in\R^{\ell\times\ell}\cy$ with
$p|_{\cD_\cL(1)}\succ0$, there are
$A_j\in\R^{\ell\times\ell}\cy$,
and $B_k\in\R^{d\times\ell}\cy$
satisfying
\beq\label{eq:cpos1Intro}
p=\sum_j A_j^*A_j + \sum_k B_k^* \cL B_k.
\eeq
\end{thm}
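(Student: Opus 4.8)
The plan is to reduce the commutative Putinar-type statement (Theorem \ref{thm:commposIntro}) to the noncommutative Linear Positivstellensatz (Theorem \ref{thm:intro1}) together with standard archimedean-module machinery. First I would pass from the matrix polynomial $p\in\R^{\ell\times\ell}\cy$ to a scalar polynomial by the usual trick of adjoining auxiliary commuting variables: introduce a vector $v=(v_1,\dots,v_\ell)$ of $\ell$ new commuting variables and consider the scalar polynomial $q(y,v):=v^\transpose p(y)\, v$, noting that $p|_{\cD_\cL(1)}\succ0$ translates (after a homogeneity/compactness argument, since $\cD_L(1)$ is bounded) into strict positivity of $q$ on the bounded set $\cD_L(1)\times\{\|v\|\le R\}$ for suitable $R$, or more cleanly on $\cD_L(1)\times S^{\ell-1}$. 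The point of keeping things bounded is so that the relevant quadratic module is archimedean. One then wants a Putinar-style representation of $q$, and reassembling such a representation—in which the $v$-variables appear quadratically—gives exactly the sum-of-hermitian-squares-plus-$L$-weighted form \eqref{eq:cpos1Intro} for $p$; this reassembly (matching the coefficient matrices $A_j$ and $B_k$ from the $v$-homogeneous-of-degree-$2$ parts) is routine linear algebra once the scalar statement is in hand.

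The heart of the matter is therefore the scalar statement: if $q\in\R[y,v]$ is strictly positive on the bounded spectrahedron $\{L(Y)\succeq0\}$ (intersected with a ball in $v$), then $q$ lies in the quadratic module generated by $L$ and by the ball constraint. Here I would invoke the noncommutative results of the paper. The idea is that Theorem \ref{thm:intro1} controls \emph{linear} inclusions $\cD_{L_1}\subseteq\cD_{L_2}$ via the identity $L_2=V^*(I_\mu\otimes L_1)V$; to get a polynomial $q$ one first writes $q = \lambda - (\text{something})$ for a scalar $\lambda$ and, using the standard Positivstellensatz certificate structure, reduces to showing that the relevant polynomial module is archimedean and closed, so that Putinar's theorem (or its proof via the archimedean Positivstellensatz) applies. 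Concretely, the cleaner conclusion here—no denominators, and the $L$-term appearing with a single weight rather than with products of generators—comes precisely from the fact, established earlier in the paper, that on a bounded $\cD_L$ the relaxed domination problem is governed by complete positivity, so one obtains a \emph{degree-controlled} certificate. The technical engine is: boundedness of $\cD_L(1)$ $\iff$ boundedness of $\cD_L$ (Proposition \ref{prop:boundedn}), which supplies the archimedean element $N^2-\sum y_j^2$ inside the module generated by $L$; then the classical Putinar argument (Cauchy–Schwarz / separation in the archimedean quotient, or an ultrafilter/GNS construction) produces the representation.

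The main obstacle I anticipate is controlling the \emph{shape} of the certificate, i.e.\ getting \eqref{eq:cpos1Intro} with the $L$-term entering linearly as $\sum_k B_k^*\cL B_k$ rather than as a general element of the quadratic module (which would a priori involve products and higher powers of $\cL$). Overcoming this is where the linearity in Theorem \ref{thm:intro1} and the complete-positivity perspective of \S\ref{sec:arv} must be exploited: one needs that positivity of $p$ on $\cD_L(1)$, via the matricial relaxation, already forces a representation at the level of the pencil itself (an inclusion-type certificate $p + \text{(hermitian squares)} = \sum B_k^*\cL B_k$ modulo lower-order corrections), and then a bootstrapping/truncation argument in the archimedean module collapses any higher $\cL$-powers into the desired form. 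A secondary technical point is the passage from strict positivity on the \emph{scalar} spectrahedron $\cD_L(1)$ to the hypothesis one actually needs for the noncommutative tool, namely positivity on all of $\cD_L$; this is exactly the content that makes the noncommutative relaxation tight in the bounded case, and it is the step where the hypothesis "$\cD_L(1)$ bounded" is essential and cannot be dropped.
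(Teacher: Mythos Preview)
Your proposal has a genuine gap and a misunderstanding, and it also diverges from the paper's route in a way worth noting.

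\textbf{The misunderstanding.} Your ``main obstacle''---that the certificate might involve ``products and higher powers of $\cL$'' which need to be ``collapsed''---is not an obstacle at all. By definition, the quadratic module generated by the single element $L$ consists precisely of sums $\sum_j A_j^*A_j + \sum_k B_k^* L B_k$; there are no higher powers of $L$ in it. So once you know $p$ lies in this quadratic module, the shape \eqref{eq:cpos1Intro} is automatic. The real work is elsewhere.

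\textbf{The gap.} The substantive step is proving that boundedness of $\cD_L(1)$ forces the quadratic module $M_{\{L\}}^\ell$ to be archimedean, i.e.\ that $N^2-\sum_j y_j^2$ actually \emph{belongs} to $M_{\{L\}}^\ell$ for some $N$. You assert this (``supplies the archimedean element $N^2-\sum y_j^2$ inside the module generated by $L$'') but do not say how. This is exactly where the linear Positivstellensatz enters, and the mechanism is concrete: form the monic pencil $\cJ_N$ whose positivity domain is the ball of radius $N$; since $\cD_L\subseteq\cD_{\cJ_N}$, Theorem~\ref{thm:intro1} gives $\cJ_N=V^*(I_\mu\otimes L)V$ with $V$ an isometry, hence $\cJ_N\in M_{\{L\}}^{g+1}$; a Schur-complement manipulation then extracts $N^2-\sum_j x_j^2\in M_{\{L\}}$. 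Without this step, you have not shown the module is archimedean, and Putinar's machinery does not start.

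\textbf{Comparison with the paper.} The paper does \emph{not} scalarize via $q(y,v)=v^\top p(y)v$. Instead it first proves a full noncommutative Positivstellensatz (Theorem~\ref{thm:pos}) on the matricial LMI set $\cD_L$ using the archimedeanity just described plus a GNS/separation argument; it then handles extra constraints (Theorem~\ref{thm:pos2}); finally, for the commutative statement it lifts $p$ to a noncommutative $F$, adjoins the symmetrized commutator constraints $g_{ij}=(x_ix_j-x_jx_i)^2$, invokes the spectral theorem to see that $F$ is positive on the resulting (commuting-tuple) domain, applies the NC result, and then takes the commutative collapse, which kills the $g_{ij}$-terms. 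Your worry about ``passing from positivity on $\cD_L(1)$ to positivity on all of $\cD_L$'' is therefore misplaced: the paper never needs that passage for the commutative theorem; the commutator constraints plus the spectral theorem reduce the matricial domain back to $\cD_L(1)$. Your scalarization route can be made to work (it is a known device), but you would still owe the archimedeanity argument above, and you would need to handle the extra ball-in-$v$ constraint carefully when reassembling the matrix certificate.
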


We also consider symmetric (matrices of) polynomials $p$
in noncommuting variables
with the property that $p(X)$ is positive definite
for all $X$ in a bounded matricial LMI set $\cD_L$; see Section \ref{sec:pos}.
For such noncommutative (NC) polynomials (and for even more general algebras
of polynomials, see Section \ref{sec:morePosSS})
we obtain a Positivstellensatz (Theorem \ref{thm:pos}) analogous to \eqref{eq:cpos1Intro}.
In the case that the  polynomial $p$ is linear,
this Positivstellensatz reduces to 
Theorem \ref{thm:intro1}, which can be regarded as a
``Linear Positivstellensatz''.
For perspective we mention that the proofs of our Positivstellens\"atze
actually rely on the linear Positivstellensatz.
For experts we point out that the key
reason LMI sets behave better is
that the quadratic module associated to a
monic linear pencil $L$ with bounded $\cD_L$ is archimedean.

\subsection{Outline}
The paper is organized as follows. Section \ref{sec:prelim}
collects a few basic facts about linear pencils and LMIs.
In Section \ref{sec:arv}, inclusion and equality of
matricial LMI sets are characterized and our results are then applied
in the algorithmic Section \ref{sec:comput}.
 Section \ref{sec:more-cube} gives some further details 
 about matricial relaxations of the matrix cube problem. 
The last two sections give algebraic certificates for
polynomials to be positive on LMI sets.

\section{Preliminaries on LMIs}\label{sec:prelim}

This section collects a few basic facts about linear pencils
and LMIs.

\begin{prop}\label{prop:2monic}
If $L$ is a linear pencil and $\cD_L$ contains
$0$ as 
an interior point, i.e., $0\in\cD_L\setminus\partial\cD_L$,
then there is a monic pencil $\widehat L$ with $\cD_L=\cD_{\widehat L}$.
\end{prop}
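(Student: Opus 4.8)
The plan is straightforward: since $0\in\cD_L$, evaluating the pencil at $X=0$ gives $L(0)=A_0\succeq 0$, and the extra hypothesis that $0\notin\partial\cD_L$ is exactly what is needed to upgrade this to $A_0\succ 0$. Once $A_0$ is known to be positive definite, the matrix $A_0^{-1/2}$ is available, and conjugating $L$ by it produces the desired monic pencil with the same positivity domain.

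First I would write $L(x)=A_0+\sum_{j=1}^{g} A_j x_j$ with each $A_j\in\SRdd$. Taking $n=1$ and evaluating at the point $0$ gives $L(0)=A_0$, so $0\in\cD_L$ forces $A_0\succeq 0$. If $A_0$ were singular, then $L(0)=A_0\succeq 0$ but $L(0)\not\succ 0$, whence $0\in\partial\cD_L$, contradicting the hypothesis. Hence $A_0\succ 0$. (Equivalently, at level $n$ one has $L(0)=A_0\otimes I_n$, which is positive definite precisely when $A_0$ is, so the matricial interior condition at $0$ is equivalent to $A_0\succ 0$.)

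Next, set
\[
  \widehat L(x):=A_0^{-1/2}\, L(x)\, A_0^{-1/2}
  = I_d+\sum_{j=1}^{g}\big(A_0^{-1/2}A_jA_0^{-1/2}\big)x_j,
\]
which is a monic linear pencil of size $d$. For $X=\mbox{col}(X_1,\dots,X_{g})\in(\SRnn)^{g}$, the mixed–product property of the Kronecker product gives
\[
  \widehat L(X)=(A_0^{-1/2}\otimes I_n)\, L(X)\, (A_0^{-1/2}\otimes I_n).
\]
Since $A_0^{-1/2}\otimes I_n$ is an invertible symmetric matrix, congruence by it preserves positive semidefiniteness, so $L(X)\succeq 0\iff\widehat L(X)\succeq 0$ for every $n$ and every such $X$. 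Therefore $\cD_L=\cD_{\widehat L}$, as required. There is essentially no obstacle here; the only point needing a little care is reading ``$0$ is an interior point of $\cD_L$'' in the matricial sense, which is precisely what excludes a singular $A_0$ and makes the rescaling by $A_0^{-1/2}$ legitimate.
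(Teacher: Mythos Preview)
Your proof is correct and takes a more direct route than the paper's. Under the paper's definition of $\partial\cD_L$ (namely $\partial\cD_L(n)=\{X\mid L(X)\succeq 0,\ L(X)\not\succ 0\}$), the hypothesis $0\in\cD_L\setminus\partial\cD_L$ immediately forces $A_0=L(0)\succ 0$, and then conjugation by $A_0^{-1/2}$ finishes exactly as you describe. The paper's proof, by contrast, does not exploit this: it only extracts from the interior-point hypothesis the weaker consequence $A_0\succeq \pm\varepsilon A_j$ for small $\varepsilon>0$, uses that to show $\Ran A_j\subseteq\Ran A_0$ for each $j$, restricts all coefficients to $V=\Ran A_0$ (where the restricted constant term $\widetilde A_0$ is now invertible), and only then conjugates to obtain a monic pencil. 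That longer argument would be needed if ``interior point'' were read as the \emph{topological} interior of $\cD_L(1)$ in $\mathbb R^g$ --- under that reading $A_0$ can be singular (for instance if $L$ has a zero direct summand) --- but with $\partial\cD_L$ as defined in the paper your shortcut is legitimate and cleaner.
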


\begin{proof}
As $0\in\cD_L$, 
$L(0)=A_0$ is positive semidefinite.
Since $0\not\in\partial\cD_L$,
$A_0 \succeq \ep A_j$ for some small $\ep\in\R_{>0}$ and
all $j$.
Let
$V=$ Ran $A_0 \subseteq {\mathbb R}^d$, and set $$\widetilde A_j: =
A_j|_V \quad \text{ for }\quad  j=0, 1, \ldots, g.$$

Clearly,
$\widetilde A_0: V \to V$ is invertible and thus positive definite.
We next show that Ran $A_0$ contains Ran $A_j$ for $j\geq1$.
If $x \perp$ Ran $A_0$, i.e., $A_0 x = 0$, then
$0 = x^* A_0 x \geq \pm\, \ep x^* A_j x$ and hence
$x^* A_j x = 0$. Since $A_0 + \ep A_j \geq 0$ and
$x^* (A_0 + \ep A_j) x = 0$ it follows that
$(A_0 + \ep A_j) x = 0$, and since $A_0 x = 0$, we finally
conclude that $A_j x = 0$, i.e., $x \perp$ Ran $A_j$.
Consequently, $\widetilde A_j: V \to V$ are all symmetric and
$\cD_L = \cD_{\widetilde L}$ for
$\widetilde L=\widetilde A_0+ \sum_{j=1}^g \widetilde A_jx_j$.

To build $\widehat L$, factor $\widetilde A_0 = B^*B$ with
$B$ invertible and set
$$\widehat A_j : = B^{-*} \widetilde A_j B^{-1} \quad \text{ for }\quad  
j = 0, \ldots, g.$$
The resulting pencil $\widehat L=I+\sum_{j=1}^g \widehat A_jx_j$ is monic and 
$\cD_L = \cD_{\widehat L}$.
\end{proof}

Our primary focus will be on the matricial LMI sets 
$\cD_L$. If the spectrahedron $\cD_L(1)\subseteq\R^g$ does not
contain interior points, then (as it is a convex set) it is contained
in a proper affine subspace of $\R^g$. By reducing the number of variables
we arrive at a new pencil whose spectrahedron \emph{does} have an interior
point. By a translation we can ensure that $0$ is an interior point.
Then Proposition \ref{prop:2monic} applies and yields a monic
linear pencil with the same matricial LMI set. 
This reduction enables us to concentrate only on 
monic linear pencils in the sequel.

\begin{lem}
\label{lem:23lin}
Let $L\in\SRdd\ax$ be a linear pencil with $\cD_L$ bounded, and let
$\widehat L\in\SRnn \ax$ be another linear pencil. 
Set $s:=n (1+g)$.
Then:
\ben[\rm (1)]
\item
$\widehat L|_{\cD_L} \succ0$ if and only if $\widehat L|_{\cD_L(s)} \succ0$;
\item
$\widehat L|_{\cD_L} \succeq0$ if and only if $\widehat L|_{\cD_L(s)} \succeq0$.
\een 
\end{lem}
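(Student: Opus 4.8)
The plan is to prove Lemma \ref{lem:23lin} by showing that positivity (resp.\ semidefiniteness) of $\wh L$ on all of $\cD_L$ is detected already at level $s = n(1+g)$, and the mechanism is a standard compression/linearization argument. Since $\cD_L(s) \subseteq \cD_L$, one implication in each of (1) and (2) is trivial; the content is the converse. So suppose $\wh L|_{\cD_L(s)} \succeq 0$ (the definite case is analogous with strict inequalities throughout, plus a compactness remark at the end). Let $X = \mathrm{col}(X_1,\dots,X_g) \in \cD_L(N)$ for some arbitrary $N$, and let $v$ be a vector with $\langle \wh L(X) v, v\rangle < 0$; the goal is to derive a contradiction by producing, from $X$ and $v$, a tuple $\tilde X \in \cD_L(s)$ and a vector $\tilde v$ witnessing $\langle \wh L(\tilde X)\tilde v, \tilde v\rangle < 0$.

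The key step is the compression. Write $\wh L = B_0 + \sum_j B_j x_j$ with $B_k \in \SRnn$, and $L = A_0 + \sum_j A_j x_j$ with $A_k \in \SRdd$. Given $X \in \cD_L(N)$ and $v$, decompose $v = \sum_{i} e_i \otimes w_i$ with $e_i$ the standard basis of $\R^n$ and $w_i \in \R^N$ (so $v \in \R^n \otimes \R^N = \R^{nN}$, matching $\wh L(X) \in \SR^{nN\times nN}$). Let $\cK \subseteq \R^N$ be the subspace spanned by $\{w_i\} \cup \bigcup_j \{X_j w_i\}$; then $\dim \cK \le n + gn = s$. Let $P$ be the orthogonal projection of $\R^N$ onto $\cK$, and set $\tilde X_j := P X_j P$, viewed as a symmetric operator on $\cK$, which we identify with $\R^{\dim \cK} \subseteq \R^s$ (padding with zeros to reach exactly $\R^s$ if $\dim \cK < s$). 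The crucial observations are: first, $\tilde X \in \cD_L(s)$, because $L(\tilde X) = (I_d \otimes P) L(X) (I_d \otimes P) \succeq 0$ as $L(X)\succeq 0$; and second, by the choice of $\cK$ we have $X_j w_i \in \cK$, so $P X_j P w_i = X_j w_i$ for each $i$, which gives $\wh L(\tilde X)(e_i \otimes P)^* $ agreeing with $\wh L(X)$ on the relevant vectors. Concretely, $\langle \wh L(\tilde X) \tilde v, \tilde v\rangle = \langle \wh L(X) v, v\rangle < 0$ where $\tilde v = \sum_i e_i \otimes w_i$ now read inside $\R^n \otimes \cK$. This contradicts $\wh L|_{\cD_L(s)} \succeq 0$, proving (2).

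For (1), the same compression shows that $\wh L|_{\cD_L(s)} \succ 0$ forces $\wh L(X) \succ 0$ for all $X \in \cD_L$: if $\langle \wh L(X)v,v\rangle \le 0$ for some unit vector $v$, the compressed witness $\tilde v$ (which still has norm one by construction, since the $w_i$ are unchanged) gives $\langle \wh L(\tilde X)\tilde v,\tilde v\rangle \le 0$ with $\tilde X \in \cD_L(s)$, contradicting strict positivity. Here one uses that $\cD_L(s)$, being a bounded (by Proposition \ref{prop:boundedn}, since $\cD_L$ is bounded) spectrahedron, is compact, so $\wh L|_{\cD_L(s)} \succ 0$ really does mean $\langle \wh L(Y)u,u\rangle > 0$ for all $Y \in \cD_L(s)$ and all nonzero $u$, with no degenerate limiting behavior to worry about.

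I expect the main obstacle to be purely bookkeeping: getting the dimension count $s = n(1+g)$ exactly right (one contribution of dimension $n$ from the $w_i$'s since $v \in \R^{nN}$ has $n$ ``components,'' and $g$ further contributions of dimension $n$ each from the $X_j$-images), and being careful that the identification of the compressed tuple as an element of $(\SR^{s\times s})^g$ — via padding the subspace $\cK$ up to dimension $s$ and extending the $\tilde X_j$ by zero — does not change any of the evaluated quantities, which it does not since $L$ is a linear pencil and direct-summing with zero blocks simply direct-sums $L(\tilde X)$ and $\wh L(\tilde X)$ with $A_0$- and $B_0$-blocks, preserving semidefiniteness and the value $\langle \wh L(\tilde X)\tilde v,\tilde v\rangle$ on vectors supported in $\cK$. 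The boundedness hypothesis on $\cD_L$ is used only lightly here (to invoke compactness of $\cD_L(s)$ in part (1)); the algebra of the compression argument itself does not need it.
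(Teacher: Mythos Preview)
Your proposal is correct and follows essentially the same compression argument as the paper: decompose the witness vector $v$ into its $n$ components $w_i$, take $\cK$ to be the span of $\{w_i\}\cup\{X_j w_i\}$ (dimension $\le n(1+g)=s$), and compress $X$ to $PXP$. The one superfluous wrinkle is your appeal to compactness of $\cD_L(s)$ in part (1): the condition $\wh L|_{\cD_L(s)}\succ 0$ is pointwise, so producing a single $\tilde X\in\cD_L(s)$ and nonzero $\tilde v$ with $\langle \wh L(\tilde X)\tilde v,\tilde v\rangle\le 0$ already contradicts it directly---no boundedness or limiting argument is needed, and indeed the paper's proof does not use the boundedness hypothesis at all.
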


\begin{proof}
In both statements the direction $(\Rightarrow)$ is obvious.
If $\widehat L|_{\cD_L} \not\succ0$, there is an $\ell$, $X\in\cD_L(\ell)$ and
$v=\oplus_{j=1}^n v_j \in (\R^\ell)^n$ with 
$$\langle \widehat L(X)v,v\rangle\leq0.$$

Let 
$$
\cK:=\span \big(\{ X_i v_j \mid i=1,\ldots,g,\; j=1,\ldots,n\}\cup\{
v_j \mid j=1,\ldots,n\}\big)
.
$$
Clearly, $\dim \cK \leq s$.
Let $P$ be the orthogonal projection of $\R^{\ell}$ onto $\cK$. Then
$$
\langle \widehat L(PXP)v,v \rangle = \langle \widehat L(X)v,v \rangle \leq0.
$$
Since $PXP\in\cD_L(s)$, this proves (1). The proof of (2) is
the same.
\end{proof}

\begin{lem}\label{lem:boundedn0}
Let $L$ be a linear pencil. Then
$$\cD_L \text{ is bounded} \quad\Leftrightarrow\quad \cD_L\big((1+g)^2\big)
\text{ is bounded}.$$
\end{lem}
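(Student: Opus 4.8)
The implication $(\Rightarrow)$ is immediate, since $\cD_L\big((1+g)^2\big)\subseteq\cD_L$. For $(\Leftarrow)$ the plan is to recast boundedness of $\cD_L$ as an LMI inclusion against a single pencil of size $1+g$, and then appeal to the dimension reduction already packaged in Lemma~\ref{lem:23lin}. For $N\in\NN$, $N\geq 1$, let $\Omega_N$ be the $(1+g)\times(1+g)$ linear pencil
\[
\Omega_N(x)=\begin{bmatrix} N\,I_g & \operatorname{col}(x_1,\dots,x_g)\\ \operatorname{row}(x_1,\dots,x_g) & N\end{bmatrix}.
\]
Taking the Schur complement of the top-left block (positive definite upon evaluation) shows that, for a tuple $X$ of symmetric matrices, $X\in\cD_{\Omega_N}$ if and only if $\sum_j X_j^2\preceq N^2 I$. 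Since $X_j^2\preceq\|X_j\|^2 I$, it follows that $\cD_L$ is bounded if and only if $\cD_L\subseteq\cD_{\Omega_N}$ for some $N$, i.e.\ if and only if $\Omega_N|_{\cD_L}\succeq 0$ for some $N$; in exactly the same way, $\cD_L\big((1+g)^2\big)$ is bounded if and only if $\Omega_N|_{\cD_L((1+g)^2)}\succeq 0$ for some $N$.

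So it suffices to prove, for each fixed $N$, that $\Omega_N|_{\cD_L((1+g)^2)}\succeq 0$ implies $\Omega_N|_{\cD_L}\succeq 0$, which is exactly the nontrivial direction of Lemma~\ref{lem:23lin}(2) applied with $\widehat L:=\Omega_N$ (of size $1+g$, so that $s=(1+g)^2$). The only wrinkle is that we are not yet entitled to assume $\cD_L$ is bounded, but the compression argument behind Lemma~\ref{lem:23lin} does not use that hypothesis. Spelling it out: if $\Omega_N(X)\not\succeq 0$ for some $X\in\cD_L(\ell)$, then $X\otimes I_{(1+g)^2}$ again lies in $\cD_L$ and still satisfies $\Omega_N(\,\cdot\,)\not\succeq 0$ there (since $L(Y\otimes I)=L(Y)\otimes I$, and likewise for $\Omega_N$), so after this replacement we may assume $\ell\geq(1+g)^2$. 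Pick $v=(w_1,\dots,w_{1+g})\in(\R^\ell)^{1+g}$ with $\langle\Omega_N(X)v,v\rangle<0$, let $\cK\subseteq\R^\ell$ be a subspace of dimension exactly $(1+g)^2$ containing $\span\big(\{w_k\}\cup\{X_jw_k\}\big)$, and let $P$ be the orthogonal projection of $\R^\ell$ onto $\cK$. Viewing $PXP$ as a tuple on $\cK$, we get $PXP\in\cD_L\big((1+g)^2\big)$, because $L(PXP)$ is the restriction to $\R^d\otimes\cK$ of the compression $(I_d\otimes P)L(X)(I_d\otimes P)$ of $L(X)\succeq 0$; and $\langle\Omega_N(PXP)v,v\rangle=\langle\Omega_N(X)v,v\rangle<0$ since every $w_k\in\cK$. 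This contradicts $\Omega_N|_{\cD_L((1+g)^2)}\succeq 0$, which proves the claim.

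The routine pieces here are the Schur complement identity for $\Omega_N$, the elementary norm bound that identifies ``$\cD_L$ bounded'' with ``$\cD_L\subseteq\cD_{\Omega_N}$ for some $N$'', and the standard fact that a joint compression of $L(X)\succeq 0$ again lies in $\cD_L$. The one step that deserves care is the interface with Lemma~\ref{lem:23lin}: since the statement being proved is itself a boundedness statement, that lemma has to be invoked through its (hypothesis-free) compression mechanism rather than as a black box, together with the small bookkeeping---padding $X$ by $\otimes\,I$ and then enlarging the compression subspace to dimension exactly $(1+g)^2$---that lands the compressed tuple in $\cD_L$ at precisely level $(1+g)^2$.
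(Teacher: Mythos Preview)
Your argument is correct and follows essentially the same approach as the paper: introduce an auxiliary $(1+g)\times(1+g)$ pencil whose positivity domain is a ball, rephrase boundedness of $\cD_L$ as $\cD_L\subseteq\cD_{\Omega_N}$ for some $N$, and then invoke the compression mechanism behind Lemma~\ref{lem:23lin}. The paper uses the pencil $\cJ_N$ (your $\Omega_N$ with the blocks permuted and scaled) and simply cites Lemma~\ref{lem:23lin} as a black box; you are right to flag that this is slightly circular, since that lemma is stated with $\cD_L$ bounded as a hypothesis, and your workaround---observing that the compression proof never uses that hypothesis and then spelling it out with the $\otimes\,I$ padding to land exactly at level $(1+g)^2$---is the honest way to close the gap.
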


\begin{proof}
Given a positive $N\in\mathbb N$, 
consider the monic linear pencil
$$
\cJ_N(x)=
\frac 1N \begin{bmatrix}
N &  x_1 &  \cdots & x_g \\
x_1 & N  \\
\vdots && \ddots \\
x_g  & & & N
\end{bmatrix}
=
\frac 1N
\begin{bmatrix}
N & x^* \\
x & N I_g
\end{bmatrix}
\in
\mbS\R^{(g+1)\times(g+1)}\ax.
$$
Note that  $\cD_L$ is bounded if and only if
for some $N\in\N$, $\cJ_N|_{\cD_L}\succeq0$.
The statement of the lemma now follows from Lemma \ref{lem:23lin}.
\end{proof}

To the linear pencil $L$ we can also associate its
{\bf matricial ball} 
\bes
\cB_{L}:=\bigcup_{n\in\NN}
\{X\in (\SRnn)^{\tg}\mid \|L(X)\|\leq 1\} =
\{X\mid I- L(X)^2 \succeq 0\}.
\ees
Observe that $\cB_L=\cD_{L^\prime}$ for
\begin{equation}
  \label{lcl}
   L^\prime=\begin{bmatrix}I&L\\L&I\end{bmatrix}.
\end{equation}

\begin{prop}\label{prop:boundedn}
Let $L$ be a linear pencil. Then:
\ben[\rm (1)]
\item
$\cD_\cL$ is bounded if and only if $\cD_\cL(1)$ is bounded;
\item
$\cB_L$ is bounded if and only if  $\cB_L(1)$ is bounded.
\een
\end{prop}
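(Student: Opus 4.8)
The plan is to reduce both statements to applications of Lemma~\ref{lem:boundedn0} via the standard trick of encoding boundedness of a matricial domain as an LMI inclusion, together with the dimension-counting already carried out in Lemma~\ref{lem:23lin}. For part~(1), observe that Lemma~\ref{lem:boundedn0} already gives that $\cD_L$ is bounded if and only if $\cD_L\big((1+g)^2\big)$ is bounded; so it suffices to show that boundedness of $\cD_L(1)$ implies boundedness of $\cD_L\big((1+g)^2\big)$ (the reverse implication being trivial since $\cD_L(1)\subseteq\cD_L$). Here the key device is again the pencil $\cJ_N$ from the proof of Lemma~\ref{lem:boundedn0}: $\cD_L$ is bounded precisely when $\cJ_N|_{\cD_L}\succeq0$ for some $N$, and the scalar statement ``$\cD_L(1)$ bounded'' is likewise equivalent to $\cJ_N|_{\cD_L(1)}\succeq0$ for some $N$ (a compact convex subset of $\R^g$ lies in some Euclidean ball, which is exactly the spectrahedron cut out by $\cJ_N$ at level~$1$). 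So the real content is: if $\cJ_N|_{\cD_L(1)}\succeq0$, then $\cJ_N|_{\cD_L}\succeq0$, i.e.\ scalar positivity of this particular linear pencil on the spectrahedron propagates to matricial positivity on all of $\cD_L$.

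This last implication is where the work lies, and it is the analogue one level down of Lemma~\ref{lem:23lin}: that lemma shows $\widehat L|_{\cD_L}\succeq0$ is detected at dimension $s=n(1+g)$, but we need it detected at dimension~$1$. The plan is to exploit the very special structure of $\cJ_N$ — it is built from the noncommutative variables only linearly and symmetrically in the ``ball'' form $\tfrac1N\left[\begin{smallmatrix}N & x^*\\ x & NI_g\end{smallmatrix}\right]$ — so that $\cJ_N(X)\succeq0$ is equivalent to $\sum_j X_j^*X_j\preceq N^2 I$, i.e.\ to the operator-norm bound $\|X\|\le N$ on the block column. But $\|X\|\le N$ for all $X\in\cD_L$ is exactly the assertion that $\cD_L$ is bounded by~$N$, and this is a \emph{scalar-testable} condition in disguise: if $\|X\|>N$ for some $X\in\cD_L(\ell)$, compress as in Lemma~\ref{lem:23lin} to a cyclic subspace to reduce $\ell$, and then use the fact that the joint operator norm of a tuple of symmetric matrices, together with membership in the convex matricial set $\cD_L$, is already witnessed after passing to a one-dimensional ``slice'' — more precisely, one combines the compression argument with a direct appeal to part~(1) of Proposition~\ref{prop:boundedn} itself applied to the pencil $L'$ of \eqref{lcl} whose domain is $\cB_L$.

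Indeed, this observation makes part~(2) essentially a restatement of part~(1): by the identity $\cB_L=\cD_{L^\prime}$ recorded just above the proposition, with $L^\prime=\left[\begin{smallmatrix}I&L\\L&I\end{smallmatrix}\right]$ a linear pencil in the same $g$ variables, part~(1) applied to $L^\prime$ gives that $\cB_L=\cD_{L'}$ is bounded iff $\cD_{L'}(1)=\cB_L(1)$ is bounded. So once part~(1) is in hand for arbitrary linear pencils, part~(2) follows with no further argument. Thus the entire proposition collapses to proving part~(1), and within part~(1) to the single nontrivial implication ``$\cD_L(1)$ bounded $\Rightarrow$ $\cD_L$ bounded.''

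I expect the main obstacle to be exactly this implication, and specifically the jump from ``bounded at dimension~$(1+g)^2$'' (which Lemma~\ref{lem:boundedn0} hands us for free) down to ``bounded at dimension~$1$.'' The honest route is: suppose $\cD_L(1)$ is bounded, say contained in the ball of radius $N$; then $\cJ_N|_{\cD_L(1)}\succeq0$; we want $\cJ_N|_{\cD_L(s)}\succeq0$ for $s=(1+g)^2\cdot(1+g)$ or so, at which point Lemma~\ref{lem:boundedn0} (or a second application of Lemma~\ref{lem:23lin}) finishes. The passage from scalar to matricial positivity of $\cJ_N$ is the crux, and I would handle it by the same cyclic-subspace compression used in Lemma~\ref{lem:23lin}: given $X\in\cD_L(\ell)$ with $\cJ_N(X)$ not positive semidefinite, there is a vector $v$ with $\langle\cJ_N(X)v,v\rangle<0$; compress $X$ to the span $\cK$ of $v$ together with the $X_iv$ (dimension $\le 1+g$) to get $X'=PXP\in\cD_L(1+g)$ with $\langle\cJ_N(X')v,v\rangle<0$; now $X'$ is a tuple of symmetric $(1+g)\times(1+g)$ matrices failing the norm bound, and a further such compression, or a direct convexity/extreme-point argument in $\cD_L(1+g)$, must be invoked to descend all the way to a scalar point — this descent is the step I would need to argue most carefully, as it is not purely formal and relies on the archimedean flavor of the pencil $\cJ_N$.
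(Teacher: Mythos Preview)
Your reduction of part~(2) to part~(1) via $\cB_L=\cD_{L'}$ is exactly what the paper does, and is correct.

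For part~(1), however, there is a genuine gap at precisely the place you flag. Compressing a single point $X\in\cD_L(\ell)$ with $\|X\|>N$ to a lower-dimensional tuple does land you back in $\cD_L$ (indeed the scalar compression $Z_i=\langle X_iv,v\rangle$ for a unit vector $v$ satisfies $L(Z)=(I\otimes V)^*L(X)(I\otimes V)\succeq0$, so $Z\in\cD_L(1)$), but it gives you \emph{no lower bound} on $\|Z\|$: even if $\|X_i\|$ is enormous, every diagonal entry $\langle X_iv,v\rangle$ may be small. So compressing a bad point does not produce a bad scalar point, and repeated compression cannot close the gap. Your appeal to ``the archimedean flavor of $\cJ_N$'' does not supply a mechanism here.

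The paper avoids this by proving the contrapositive and compressing a \emph{ray} rather than a point. Assuming $\cD_L$ is unbounded, Lemma~\ref{lem:boundedn0} gives unboundedness of $\cD_L(N)$ for some fixed $N$; take $t_k\to\infty$ and unit-norm $X^{(k)}\in(\SR^{N\times N})^g$ with $t_kX^{(k)}\in\cD_L$, pass to a subsequential limit $X$ of norm~$1$, and use convexity (the segment from $0$ to $t_kX^{(k)}$ lies in $\cD_L$) to conclude that $tX\in\cD_L$ for all $t\ge0$. Now choose a unit vector $v$ with $\langle X_iv,v\rangle\ne0$ for some $i$ (possible since $X\ne0$ is a symmetric tuple) and set $Z_i=\langle X_iv,v\rangle$. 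Then $L(tZ)=(I\otimes V)^*L(tX)(I\otimes V)\succeq0$ for every $t>0$, so the nonzero ray $\{tZ:t\ge0\}$ lies in $\cD_L(1)$, and $\cD_L(1)$ is unbounded. The point is that compressing a ray preserves the ray property even though it destroys norm information; the compactness step is what manufactures the ray in the first place, and this is the idea your outline is missing.
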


\begin{proof}
(1)
The implication $(\Rightarrow)$ is obvious. For the converse
suppose $\cD_\cL$ is unbounded. By Lemma \ref{lem:boundedn0},
this means $\cD_\cL(N)$ is unbounded for some $N\in\NN$.
Then there exists a sequence $(X^{(k)})$
  from $(\SR^{N\times N})^\tg$ such
  that $\|X^{(k)}\|=1$ and a sequence
  $t_k\in\RR_{>0}$ tending to $\infty$ such that
  $\cL(t_k X^{(k)})\succeq0$.  A subsequence
   of $(X^{(k)})$ converges to $X=(X_1,\ldots,X_g)
\in (\mbS\R^{N\times N})^\tg$ which
  also has norm $1$.
For any $t$, $tX^{(k)}\to tX$ and for $k$ big enough,
$tX^{(k)}\in\cD_{\cL}$ by convexity.
So $X$ satisfies $\cL(t X)\succeq 0$ for
all $t\in\RR_{\geq0}$.

There is a nonzero vector $v$ so that $\langle X_iv,v\rangle\neq 0$ 
for at least one $i$.
Then with $Z:=(\langle X_1v,v\rangle,\ldots,
\langle X_gv,v\rangle)\in\R^g\setminus\{0\}$,
and $V$ denoting the map $V:\mathbb R\to \mathbb R^N$
 defined by $Vr=rv$, 
$$
 \cL(tZ)= (I\otimes V)^* L(tX)(I\otimes V)
$$
is nonnegative for all $t>0$, so
$\cD_\cL(1)$ is unbounded.

To conclude the proof observe that (2) is immediate from (1) using \eqref{lcl}.
\end{proof}

  A linear pencil
  $L$ is {\bf nondegenerate}, if
  it is one-one in that $L(X)=L(Y)$ implies $X=Y$ for all
  $n\in\NN$ and $X,Y\in(\SRnn)^{g}$. 
  In particular, a truly linear
pencil $L$ is nondegenerate if and only if $L(X)\neq0$ for $X\neq0$.

\begin{lemma}\label{lem:nonsingular}
  For a linear pencil $L(x)=A_0+\sum_{j=1}^{g} A_j x_j$ the following
 are equivalent:
\ben[\rm (i)]
 \item $L$ is nondegenerate;
 \item $L(Z)=L(W)$ implies $Z=W$ for all $Z,W\in\R^{g}$;
 \item the set $\{A_j \mid j=1,\ldots,g\}$ is linearly independent;
\item $L^{(1)}$ is nondegenerate.
\een
\end{lemma}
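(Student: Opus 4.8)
The plan is to establish the cycle of implications (i) $\Rightarrow$ (ii) $\Rightarrow$ (iii) $\Rightarrow$ (iv) $\Rightarrow$ (i). The implication (i) $\Rightarrow$ (ii) is immediate, since (ii) is just the special case $n=1$ of the definition of nondegeneracy in (i). For (ii) $\Rightarrow$ (iii): suppose the $A_j$ are linearly dependent, say $\sum_j c_j A_j = 0$ with $c = (c_1,\dots,c_g) \neq 0$. Then $L(Z) = L(Z + c)$ for every $Z \in \R^g$ (the $A_0$ terms cancel and $\sum_j A_j (Z_j + c_j) = \sum_j A_j Z_j$), contradicting (ii) since $Z \neq Z+c$. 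Hence the $A_j$ are linearly independent.

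For (iii) $\Rightarrow$ (iv): recall $L^{(1)}(x) = \sum_{j=1}^g A_j x_j$ is the truly linear part, and by the remark preceding the lemma, $L^{(1)}$ is nondegenerate iff $L^{(1)}(X) \neq 0$ whenever $X \neq 0$. Suppose $L^{(1)}(X) = \sum_j A_j \otimes X_j = 0$ for some $X = (X_1,\dots,X_g) \in (\SRnn)^g$, not all $X_j$ zero. Pick indices and a pair of coordinates so that some entry $(X_{j_0})_{pq}$ is nonzero; compressing $L^{(1)}(X)$ to the $(p,q)$ block entry (i.e., applying $e_p^* (\,\cdot\,) e_q$ on the second tensor factor) yields $\sum_j A_j (X_j)_{pq} = 0$ with not all scalars $(X_j)_{pq}$ zero — contradicting linear independence of $\{A_j\}$. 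So $L^{(1)}$ is nondegenerate.

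For (iv) $\Rightarrow$ (i): suppose $L(X) = L(Y)$ for $X, Y \in (\SRnn)^g$. Then $A_0 \otimes I_n + \sum_j A_j \otimes X_j = A_0 \otimes I_n + \sum_j A_j \otimes Y_j$, so $\sum_j A_j \otimes (X_j - Y_j) = L^{(1)}(X - Y) = 0$. By nondegeneracy of $L^{(1)}$, $X - Y = 0$, i.e., $X = Y$; hence $L$ is nondegenerate.

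I do not expect a genuine obstacle here: the argument is essentially bookkeeping about linear independence and the fact that the constant term $A_0 \otimes I_n$ always cancels in any comparison $L(X) = L(Y)$. The only mildly delicate point is the compression argument in (iii) $\Rightarrow$ (iv) — one must be a little careful that the symmetry constraint $X_j \in \SRnn$ does not interfere, but since one can always choose a diagonal entry $(X_{j_0})_{pp} \neq 0$ when $X_{j_0} \neq 0$ is symmetric and nonzero (or, failing that, use that a nonzero symmetric matrix has a nonzero entry and the compression to that entry is still a valid linear functional), this causes no trouble.
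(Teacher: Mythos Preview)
Your proof is correct and follows essentially the same route as the paper: the paper proves (i)$\Leftrightarrow$(iv), notes (i)$\Rightarrow$(ii)$\Rightarrow$(iii) are obvious, and for (iii)$\Rightarrow$(i) uses the canonical shuffle to rewrite $L^{(1)}(X-Y)=0$ as $\sum_j (X_j-Y_j)\otimes A_j=0$ and then reads off $X_j=Y_j$ entrywise from the linear independence of the $A_j$ --- exactly your compression argument. One small slip: a nonzero symmetric matrix need \emph{not} have a nonzero diagonal entry (e.g.\ $\bigl(\begin{smallmatrix}0&1\\1&0\end{smallmatrix}\bigr)$), but your parenthetical already gives the correct fix, and in fact the symmetry of the $X_j$ plays no role in the compression step at all.
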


\begin{proof}
Clearly, (i) $\Leftrightarrow$ (iv). Also,
(i) $\Rightarrow$ (ii) and (ii) $\Rightarrow$ (iii) are obvious.
For the remaining implication (iii) $\Rightarrow$ (i),
assume $L(X)=L(Y)$ for some $X,Y\in(\SRnn)^{g}$. 
Equivalently, $L^{(1)}(X-Y)=0$. 
Note that $L^{(1)}(X-Y)$ equals $\sum_{j=1}^g (X_j-Y_j)\otimes A_j$ modulo
the canonical shuffle. If this expression equals $0$, then the linear
independence of the $A_1,\ldots,A_g$ (applied entrywise) implies $X=Y$.
\end{proof}

\begin{prop}\label{prop:bounded12}
Let $L=I+\sum_{j=1}^\tg A_jx_j\in\SRdd\ax$ be a monic linear pencil
 and let $L^{(1)}$ denote its truly linear part.
Then:
\begin{enumerate}[\rm (1)]
\item
$\cB_{L^{(1)}}$ is bounded if and only if $L^{(1)}$ is nondegenerate;
\item
if $\cD_\cL$ is bounded then $\{I,A_j\mid j=1,\ldots,\tg\}$
is linearly independent; the converse fails in general.
\een
\end{prop}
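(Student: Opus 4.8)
For part~(1), the plan is to relate boundedness of the matricial ball $\cB_{L^{(1)}}$ directly to the nondegeneracy of $L^{(1)}$, using the already-established equivalence (Proposition~\ref{prop:boundedn}(2)) that $\cB_{L^{(1)}}$ is bounded iff the scalar set $\cB_{L^{(1)}}(1)=\{z\in\R^g\mid \|L^{(1)}(z)\|\le 1\}$ is bounded. So it suffices to show that this scalar convex body is bounded iff $\{A_1,\dots,A_g\}$ is linearly independent (invoking Lemma~\ref{lem:nonsingular} to convert nondegeneracy of $L^{(1)}$ into linear independence). For the contrapositive of the forward direction: if the $A_j$ are linearly dependent, pick $c\in\R^g\setminus\{0\}$ with $\sum c_j A_j=0$; then $L^{(1)}(tc)=0$ for all $t$, so the whole line $\R c$ lies in $\cB_{L^{(1)}}(1)$, which is therefore unbounded. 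Conversely, if the $A_j$ are linearly independent, then $z\mapsto \sum A_j z_j$ is an injective linear map $\R^g\to\SRdd$, hence bounded below: there is $\kappa>0$ with $\|L^{(1)}(z)\|\ge\kappa\|z\|$ (any two norms on the finite-dimensional image are equivalent, and injectivity gives $\|L^{(1)}(z)\|>0$ for $z\ne0$, so compactness of the unit sphere yields the bound). Thus $\|L^{(1)}(z)\|\le 1$ forces $\|z\|\le 1/\kappa$, and $\cB_{L^{(1)}}(1)$ is bounded.

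For part~(2), the plan is: if $\cD_\cL$ is bounded, I want to show $\{I,A_1,\dots,A_g\}$ is linearly independent in $\SRdd$. Suppose not, so there exist $c_0,c_1,\dots,c_g$, not all zero, with $c_0 I+\sum_{j=1}^g c_j A_j=0$. The $c_j$ for $j\ge 1$ cannot all be zero (else $c_0 I=0$ forces $c_0=0$ too), so the truly linear part satisfies $L^{(1)}(c)=\sum c_j A_j = -c_0 I$, a scalar multiple of the identity, at the point $c=(c_1,\dots,c_g)\ne 0$. Then for the monic pencil, $L(tc)=I+tL^{(1)}(c)=(1-tc_0)I$, which is positive semidefinite for all $t\ge 0$ with $tc_0\le 1$ — in particular for all $t\ge 0$ if $c_0\le 0$, and for all sufficiently small $t>0$ if $c_0>0$; after replacing $c$ by $-c$ if necessary we may assume $c_0\le 0$ (note if $c_0=0$ then $L(tc)=I\succeq0$ for all $t$ outright). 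Hence $tc\in\cD_\cL(1)\subseteq\cD_\cL$ for all $t\ge 0$, contradicting boundedness of $\cD_\cL$ (using Proposition~\ref{prop:boundedn}(1) to reduce to the scalar level, though here the scalar points suffice directly). For the failure of the converse, I would exhibit a simple explicit monic pencil whose coefficients together with $I$ are linearly independent but whose positivity domain is an unbounded spectrahedron — e.g.\ in $g=1$ variable, $L(x)=\begin{bmatrix}1 & 0\\ 0 & 1+x\end{bmatrix}$, where $\{I,\diag(0,1)\}$ is independent yet $\cD_L(1)=[-1,\infty)$ is unbounded (one could also use the $2\times 2$ example $\begin{bmatrix}1+x & 0\\0 & 1\end{bmatrix}$ or a block example making the same point).

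The routine linear-algebra steps — norm equivalence on a finite-dimensional space, the unit-sphere compactness argument — are standard. The one place requiring a little care is the bookkeeping in part~(2): correctly handling the sign of $c_0$ and the degenerate case $c_0=0$ so that one genuinely produces an unbounded ray inside $\cD_\cL(1)$. This is elementary but is the only spot where a careless argument could go wrong, so I would state it cleanly as above. I do not anticipate any real obstacle; the proposition is a packaging of Lemma~\ref{lem:nonsingular} and Proposition~\ref{prop:boundedn} together with these short direct arguments.
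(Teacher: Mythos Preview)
Your proposal is correct and follows essentially the same approach as the paper. The only cosmetic differences are that in part~(1) the paper phrases the converse via a sequence-and-compactness extraction rather than your equivalent ``bounded below on the unit sphere'' formulation, and in part~(2) the paper splits into the cases $c_0=0$ and $c_0\ne 0$ (rather than normalizing the sign) and gives the general counterexample family ``all $A_j$ positive semidefinite'' of which your $1$-variable example is an instance.
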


\begin{proof}
(1) Suppose $L^{(1)}$ is not nondegenerate, say $\sum_{j=1}^g z_j A_j=0$ for some
$z_j\in \R$. Then with $Z=(z_1,\ldots,z_\tg)\in\R^\tg$ we
have $tZ\in \cB_{L^{(1)}}$ for every $t$, so $\cB_{L^{(1)}}$ is not bounded.
Let us now prove the converse.
First, if $\cB_{L^{(1)}}$ is unbounded,  then by Proposition \ref{prop:boundedn},
  $\cB_{L^{(1)}}(1)$ is unbounded.
  So  suppose $\cB_{L^{(1)}}(1)$ is unbounded.
 Then there exists a sequence $(Z^{(k)})$
  from $\R^\tg$ such
  that $\|Z^{(k)}\|=1$ and a sequence
  $t_k\in\RR_{>0}$ tending to $\infty$ such that
  $\| L^{(1)}(t_k Z^{(k)})\|\le 1$.  A subsequence
   of $(Z^{(k)})$ converges to $Z\in\R^\tg$ which
  also has norm $1$; however, $\| L^{(1)}(Z)\|=0$
  and thus $L^{(1)}$ is degenerate.

For (2) assume
\beq\label{eq:linDep}
\la + \sum_j x_j A_j=0
\eeq
with
$\la,x_j\in\R$.
We may assume $x_j\neq 0$
for at least one index $j$.
Let $Z=(x_1,\ldots,x_\tg)\neq0$. If $\la=0$, then
$\cL(tZ)=I$ is positive semidefinite for all $t\in\RR$. Thus $\cD_\cL$
is not bounded.

Now let $\la\in\RR$ be nonzero. Then
$\cL(Z/\la)=0$. Thus, 
$\cL(t Z/\la)\succeq0$ for all $t<0$, showing $\cD_\cL$
is unbounded.

The converse of (2) fails in general.
For instance, if the $A_j$ are positive semidefinite, then
$\cD_\cL$ contains $(\RR_{\geq0})^\tg$ and thus cannot be bounded.
\end{proof}

\section{Matricial LMI sets: Inclusion and Equality}
\label{sec:arv}

Given $L_1$ and $L_2$ monic linear pencils
\beq
\label{eq:penc12}
 L_j(x) = I+\sum_{\ell=1}^\tg A_{j,\ell} x_\ell \in\mbS\R^{d_j\times d_j}\ax , \quad j=1,2,
\eeq
we shall consider the following two inclusions for
matricial LMI sets:
\beq\label{eq:bind1}
\cD_{\cL_1}\subseteq\cD_{\cL_2};\eeq
\beq\label{eq:bind2}
\partial\cD_{\cL_1}\subseteq\partial\cD_{\cL_2}.
\eeq
Equation \eqref{eq:bind1} is equivalent to:
for all $n\in\NN$ and $X\in (\SRnn)^\tg$,
$$
\cL_1(X)\succeq 0 \quad \Rightarrow \quad \cL_2(X) \succeq 0.
$$
Similarly, \eqref{eq:bind2} can be rephrased as follows:
$$
\cL_1(X)\succeq 0\quad \mathrm{and} \quad  \cL_1(X)\not\succ0 \quad \Rightarrow \quad \cL_2(X)\succeq 0\quad \mathrm{and} \quad
  \cL_2(X)\not\succ0.
$$

In this section we characterize precisely the relationship between
$L_1$ and $L_2$ satisfying \eqref{eq:bind1} and \eqref{eq:bind2}.
Section \ref{sec:tau} handles \eqref{eq:bind1} and gives
a Positivstellensatz for linear pencils. Section \ref{subsec:equalD}
shows that ``minimal'' pencils $L_1$ and $L_2$ satisfying \eqref{eq:bind2}
are the same up to unitary equivalence.

\begin{example}\label{ex:nonScalar}
By Lemma \ref{lem:23lin} it is enough to
 test condition \eqref{eq:bind1}
on matrices of some fixed (large enough) size. It is, however, not enough
to test on $X\in\R^\tg$. For instance, let
$$
\De(x_1,x_2)=
I+ 
 \begin{bmatrix}  0 & 1 & 0 \\
                     1 & 0 & 0\\
                     0 & 0 & 0
                      \end{bmatrix} x_1 +
\begin{bmatrix}  0 & 0 & 1 \\
                     0 & 0 & 0\\
                     1 & 0 & 0
                      \end{bmatrix} x_2 =
\begin{bmatrix}  1 & x_1 & x_2 \\
                     x_1 & 1 & 0\\
                     x_2 & 0 & 1
                      \end{bmatrix}\in\mbS\R^{3\times 3}\ax
$$
and
$$
\Ga(x_1,x_2)=I+ \begin{bmatrix}1  & 0 \\ 0 & -1
\end{bmatrix} x_1 +  \begin{bmatrix} 0  & 1 \\ 1 & 0
\end{bmatrix} x_2 =  \begin{bmatrix} 1+x_1  & x_2 \\ x_2 & 1-x_1
\end{bmatrix}\in\mbS\R^{2\times 2}\ax.
$$Then
\[
\begin{split}
\cD_{\De} & =\{(X_1,X_2)\mid 1-X_1^2-X_2^2 \succeq0\}, \\
\cD_{\De}(1) & =\{(X_1,X_2)\in\R^2\mid X_1^2+X_2^2\leq 1\}, \\
\cD_{\Ga}(1) & =\{(X_1,X_2)\in\R^2\mid X_1^2+X_2^2\leq 1\}.
\end{split}
\]
Thus $\cD_{\De}(1)=\cD_{\Ga}(1)$. On one hand,
$$
\left( \begin{bmatrix} \frac 12 & 0 \\ 0 & 0 \end{bmatrix},
\begin{bmatrix} 0& \frac 34  \\ \frac 34  & 0 \end{bmatrix}\right)
\in\cD_{\De}\setminus\cD_{\Ga},$$
so $\De (X_1,X_2)\succeq0$ does not imply $\Ga (X_1,X_2)\succeq0$.
On the other hand, $\Ga (X_1,X_2)\succeq0$ does imply
$\De (X_1,X_2)\succeq0$. 
We shall prove this later below, see Example \ref{ex:nonScalar2}.
\end{example}

We now introduce  subspaces to be used in our considerations:
\beq\label{eq:spanSdefined}
\cS_j = \span \{I, A_{j,\ell}\mid\ell=1,\ldots,\tg\}
\subseteq
\mbS\R^{d_j\times d_j}.
\eeq

\begin{lem}\label{lem:Sj}
$\cS_j= \span\{\cL_j(X)\mid X\in\R^\tg\} 
.$
\end{lem}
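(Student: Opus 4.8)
The plan is to show the two sets $\cS_j$ and $\span\{\cL_j(X) \mid X \in \R^\tg\}$ coincide by mutual inclusion, which is essentially a routine linear-algebra unwinding of the definition of a monic linear pencil. Fix $j$ and write $L_j(x) = I + \sum_{\ell=1}^\tg A_{j,\ell} x_\ell$.

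For the inclusion $\span\{L_j(X) \mid X \in \R^\tg\} \subseteq \cS_j$: for any $X = (x_1,\ldots,x_\tg) \in \R^\tg$ we have $L_j(X) = I + \sum_\ell A_{j,\ell} x_\ell$, which is manifestly a linear combination of $I, A_{j,1}, \ldots, A_{j,\tg}$, hence lies in $\cS_j$. Since $\cS_j$ is a subspace, the span of all such $L_j(X)$ is contained in $\cS_j$.

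For the reverse inclusion $\cS_j \subseteq \span\{L_j(X) \mid X \in \R^\tg\}$: it suffices to produce each generator $I$ and each $A_{j,\ell}$ as a linear combination of evaluations $L_j(X)$. Taking $X = 0$ gives $L_j(0) = I$, so $I$ is in the span. Then for each fixed $\ell$, taking $X = e_\ell$ (the $\ell$-th standard basis vector of $\R^\tg$) gives $L_j(e_\ell) = I + A_{j,\ell}$, so $A_{j,\ell} = L_j(e_\ell) - L_j(0)$ lies in the span. Since these elements generate $\cS_j$, we conclude $\cS_j \subseteq \span\{L_j(X) \mid X \in \R^\tg\}$, completing the proof.

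There is no real obstacle here; the only thing to be slightly careful about is that one is working with $X$ ranging over $\R^\tg$ (scalar points) rather than over tuples of matrices, so the evaluation $L_j(X)$ is just the literal substitution $I + \sum_\ell A_{j,\ell} x_\ell$ with no Kronecker factors, and the argument is purely about the span of an affine-linear family of matrices. The statement is a convenient bookkeeping lemma identifying the coefficient space $\cS_j$ with the linear span of the range of $L_j$ over scalar points, to be used in the complete-positivity arguments that follow.
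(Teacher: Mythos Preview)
Your proof is correct and is exactly the natural double-inclusion argument; the paper states this lemma without proof, treating it as an immediate consequence of the definitions, so there is nothing to compare against.
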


The key tool in studying inclusions of matricial LMI sets is the mapping
$\tau$ we now define.

\begin{definition}\label{def:tau}
Let
$L_1,L_2$ be monic linear pencils as in \eqref{eq:penc12}.
 If
$\{I,A_{1,\ell}\mid \ell=1,\ldots, g\}$
is linearly independent
$($e.g.~$\cD_{\cL_1}$ is bounded$)$,
 we define the unital linear map
\beq\label{eq:tau}
  \tau : \cS_1 \to \cS_2 , \quad A_{1,\ell}\mapsto A_{2,\ell}.
  \eeq
\end{definition}

We shall soon see that,
assuming \eqref{eq:bind1},  $\tau$ has a property called complete positivity, 
which we now introduce.
Let $\cS_j\subseteq\R^{d_j\times d_j}$ be unital linear subspaces
invariant under the transpose, and $\phi:\cS_1\to\cS_2$ a unital linear $*$-map.
For $n\in\N$, $\phi$ induces the map 
$$\phi_n=I_n\otimes\phi:\R^{n\times n}\otimes\cS_1=\cS_1^{n\times n}
 \to \cS_2^{n\times n},
\quad M\otimes A \mapsto M \otimes \phi(A),
$$
called an {\bf ampliation} of $\phi$.
Equivalently,
$$
\phi_n\left( \begin{bmatrix} T_{11} & \cdots & T_{1n} \\
\vdots & \ddots & \vdots \\
T_{n1} & \cdots & T_{nn}\end{bmatrix}\right)
=\begin{bmatrix} \phi(T_{11}) & \cdots & \phi(T_{1n}) \\
\vdots & \ddots & \vdots \\
\phi(T_{n1}) & \cdots & \phi(T_{nn})\end{bmatrix}
$$
for $T_{ij}\in\cS_1$.
We say that $\phi$ is {\bf $k$-positive} if 
$\phi_k$ is a positive map. If $\phi$ is $k$-positive for every $k\in\N$,
then $\phi$ is {\bf completely positive}.
If $\phi_k$ is an isometry for every $k$, then $\phi$ is {\bf 
completely isometric}.

\begin{example}[Example \ref{ex:nonScalar}
revisited]
\label{ex:nonScalar2}
The map $\tau:\cS_2\to\cS_1$ in our example is given by
$$
\begin{bmatrix}1  & 0 \\ 0 & 1
\end{bmatrix}
\mapsto
\begin{bmatrix}  1 & 0 & 0 \\
                     0 & 1 & 0\\
                     0 & 0 & 1
                      \end{bmatrix},\quad
\begin{bmatrix}1  & 0 \\ 0 & -1
\end{bmatrix}
\mapsto
\begin{bmatrix}  0 & 1 & 0 \\
                     1 & 0 & 0\\
                     0 & 0 & 0
                      \end{bmatrix}, \quad
\begin{bmatrix} 0  & 1 \\ 1 & 0
\end{bmatrix}
\mapsto
\begin{bmatrix}  0 & 0 & 1 \\
                     0 & 0 & 0\\
                     1 & 0 & 0
                      \end{bmatrix}.
$$
Consider the extension of $\tau$ to a unital linear $*$-map
$\psi:\R^{2\times2}\to\R^{3\times3}$, defined by
$$
E_{11}\mapsto
\frac 12
\begin{bmatrix}  1 & 1 & 0 \\
                     1 & 1 & 0\\
                     0 & 0 & 1
                      \end{bmatrix}  , \;
E_{12}
\mapsto
\frac 12
\begin{bmatrix}  0 & 0 & 1 \\
                     0 & 0 & 1\\
                     1 & -1 & 0
                      \end{bmatrix}, \;
E_{21} \mapsto
\frac 12
\begin{bmatrix}  0 & 0 & 1 \\
                     0 & 0 & -1\\
                     1 & 1 & 0
                      \end{bmatrix} ,\;
E_{22}
\mapsto
\frac 12
\begin{bmatrix}  1 & -1 & 0 \\
                     -1 & 1 & 0\\
                     0 & 0 & 1
                      \end{bmatrix}.
$$
(Here $E_{ij}$ are the $2\times 2$ matrix units.)
Now we show the map $\psi$ is completely positive.
To do this, we use its Choi matrix defined as
\beq\label{eq:choi2x2}
C=\begin{bmatrix}
\psi(E_{11}) & \psi(E_{12})\\
\psi(E_{21}) & \psi(E_{22})
\end{bmatrix}.
\eeq
\cite[Theorem 3.14]{Pau}
says $\psi$ is completely positive if and only if $C\succeq0$.
We will use the Choi matrix again in Section \ref{sec:comput}
for computational algorithms.
To see that $C$ is positive semidefinite, note
$$
C=\frac 12 W^*W\quad \text{ for }\quad W=
\begin{bmatrix}
1 & 1 & 0 & 0 & 0 & 1 \\
 0 & 0 & 1 & 1 & -1 & 0 \\
\end{bmatrix}.
$$

Now $\psi$ has a very nice
representation:
\beq\label{ex:greatExample}
\psi(S)=\frac12 V_1^*SV_1+\frac 12V_2^*SV_2= \frac 12 \begin{bmatrix} V_1 \\ V_2\end{bmatrix}^*
\begin{bmatrix} S & 0\\ 0 & S\end{bmatrix} 
\begin{bmatrix} V_1 \\ V_2\end{bmatrix}
\eeq
for all $S\in\R^{2\times 2}$.
(Here 
$
V_1=
\begin{bmatrix}
1 & 1 & 0  \\
 0 & 0 & 1 \\
\end{bmatrix}
$
and
$V_2=
\begin{bmatrix}
 0 & 0 & 1 \\
1 & -1 & 0 \\
\end{bmatrix},
$
thus 
$
W = \begin{bmatrix} V_1 & V_2\end{bmatrix}.
$)
In particular, 
\beq\label{ex:great2}
2\De (x,y)= V_1^* \Ga (x,y)V_1+V_2^* \Ga (x,y)V_2.
\eeq
Hence $\Ga (X_1,X_2)\succeq0 $ implies $\De (X_1,X_2)\succeq0$, i.e.,
$\cD_{\Ga }\subseteq\cD_{\De}$.

The formula \eqref{ex:great2} 
illustrates our linear Positivstellensatz which is the subject
of the next subsection.
The construction of the formula in this example 
is a concrete implementation of
the theory leading up to the general result that is presented
in Corollary \ref{cor:tau}. 
\end{example}

\subsection{The map
$\tau$ is completely positive: Linear Positivstellensatz}
\label{sec:tau}
 We begin by equating $n$-positivity of $\tau$ with inclusion
$\cD_{L_1}(n)\subseteq\cD_{L_2}(n)$. Then we use 
the complete positivity of 
$\tau$
to give an algebraic characterization of pencils $L_1$, $L_2$
producing an inclusion $\cD_{L_1}\subseteq\cD_{L_2}$.

\begin{thm}
 \label{thm:tau}
Let
\[
  L_j(x)=I+\sum_{\ell=1}^\tg A_{j,\ell} x_\ell
    \in \mbS\R^{d_j\times d_j}\ax, \quad j=1,2
\]
be monic linear pencils and assume
the matricial LMI set $\cD_{\cL_1}$ is bounded.
Let $\tau:\cS_1\to\cS_2$
be the unital linear map $A_{1,\ell}\mapsto A_{2,\ell}$.
\ben[\rm (1)]
\item
$\tau$ is $n$-positive if and only if $\cD_{L_1}(n)\subseteq
\cD_{L_2}(n)$;
\item
$\tau$ is completely positive if and only if
$\cD_{L_1}\subseteq\cD_{L_2}$;
\item
 $\tau$ is completely isometric if and only 
if $\partial\cD_{L_1}\subseteq\partial\cD_{L_2}$, 
\een
\end{thm}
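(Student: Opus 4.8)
The plan is to argue everything at the level of the ampliations $\tau_n=\mathrm{id}\otimes\tau$, using the canonical shuffle as a dictionary: for $X\in(\SRnn)^g$ the evaluation $L_j(X)=I_{d_j}\otimes I_n+\sum_\ell A_{j,\ell}\otimes X_\ell$ is unitarily equivalent to $I_n\otimes I_{d_j}+\sum_\ell X_\ell\otimes A_{j,\ell}\in\cS_j^{n\times n}$, and, because $\tau$ is unital with $\tau(A_{1,\ell})=A_{2,\ell}$, the map $\tau_n$ carries the $j=1$ version to the $j=2$ version. With this, the forward direction of (1) costs nothing: if $\tau_n$ is positive and $X\in\cD_{L_1}(n)$, then $L_1(X)\succeq0$, so $L_2(X)=\tau_n\big(L_1(X)\big)\succeq0$ and $X\in\cD_{L_2}(n)$.

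For the reverse direction of (1), take a symmetric $T\in\cS_1^{n\times n}$ with $T\succeq0$; by linear independence of $\{I,A_{1,\ell}\}$ (a consequence of $\cD_{L_1}$ being bounded, Proposition~\ref{prop:bounded12}) it has a unique representation $T=M_0\otimes I+\sum_\ell M_\ell\otimes A_{1,\ell}$ with all $M_\ell$ symmetric, and the goal is $\tau_n(T)=M_0\otimes I+\sum_\ell M_\ell\otimes A_{2,\ell}\succeq0$. Since $\tau_n$ is continuous and unital it suffices to show $\tau_n(T)+\epsilon(I_n\otimes I)\succeq0$ for every $\epsilon>0$, so replacing $M_0$ by $M_0+\epsilon I_n$ one may assume $T\succ0$. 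The one essential use of boundedness of $\cD_{L_1}$ is now to deduce $M_0\succ0$: compressing the first tensor leg of $T$ onto a unit vector $w$ gives $c\,I+\sum_\ell z_\ell A_{1,\ell}\succ0$ with $c=\langle M_0w,w\rangle$, $z=(\langle M_\ell w,w\rangle)_\ell$; if $c\le0$ this forces $\sum_\ell z_\ell A_{1,\ell}\succ0$ with $z\ne0$, whence $L_1(tz)\succeq(1+t\delta)I\succ0$ for all $t\ge0$ and suitable $\delta>0$, making $\cD_{L_1}(1)$ — hence, by Proposition~\ref{prop:boundedn}, $\cD_{L_1}$ — unbounded, a contradiction. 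Given $M_0\succ0$, conjugating $T$ by $M_0^{-1/2}\otimes I$ shows $T\succ0$ is equivalent to $N:=(M_0^{-1/2}M_\ell M_0^{-1/2})_\ell\in\cD_{L_1}(n)$; the hypothesis $\cD_{L_1}(n)\subseteq\cD_{L_2}(n)$ yields $I_n\otimes I+\sum_\ell N_\ell\otimes A_{2,\ell}\succeq0$, and conjugating back by $M_0^{1/2}\otimes I$ gives $\tau_n(T)\succeq0$. Part (2) then follows at once, since $\tau$ is completely positive iff $\tau_n$ is positive for all $n$, iff $\cD_{L_1}(n)\subseteq\cD_{L_2}(n)$ for all $n$, iff $\cD_{L_1}\subseteq\cD_{L_2}$. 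I expect this reverse direction — precisely the passage $T\succ0\Rightarrow M_0\succ0$ — to be the technical heart of the theorem: without boundedness $M_0$ may fail to be invertible and the conjugation argument collapses.

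For (3), first assume $\tau$ is completely isometric. A unital contractive map between operator systems is positive, so $\tau$ is completely positive and $\cD_{L_1}\subseteq\cD_{L_2}$ by (2); moreover $\tau$ is injective and $\tau^{-1}$ on its range is again unital and completely isometric, hence completely positive, so $\tau$ is a complete order embedding: for symmetric $S\in\cS_1^{n\times n}$ one has $\tau_n(S)\succeq0\iff S\succeq0$ (standard operator-system facts; see e.g.~\cite{Pau}). Now take $X\in\partial\cD_{L_1}(n)$, so $L_1(X)\succeq0$ and $L_1(X)\not\succ0$. From $\cD_{L_1}\subseteq\cD_{L_2}$ we get $L_2(X)\succeq0$; and since $L_1(X)$ has nontrivial kernel, $L_1(X)-\epsilon I\not\succeq0$ for every $\epsilon>0$, so order-reflection gives $L_2(X)-\epsilon I=\tau_n\big(L_1(X)-\epsilon I\big)\not\succeq0$ for every $\epsilon>0$; thus $L_2(X)$ is singular and $X\in\partial\cD_{L_2}(n)$. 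Hence $\partial\cD_{L_1}\subseteq\partial\cD_{L_2}$.

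Conversely, assume $\partial\cD_{L_1}\subseteq\partial\cD_{L_2}$; I claim this forces $\cD_{L_1}=\cD_{L_2}$, from which (3) follows. Fix $n$ and $0\ne X\in(\SRnn)^g$ and examine the half-line $t\mapsto tX$: with $W_j=\sum_\ell X_\ell\otimes A_{j,\ell}$, the set $\{t\ge0:L_j(tX)\succeq0\}$ equals $[0,t_j^*]$ where $t_j^*=-1/\lambda_{\min}(W_j)$ (read as $+\infty$ when $W_j\succeq0$), and when $t_j^*<\infty$ the point $t_j^*X$ is the unique point of $\partial\cD_{L_j}(n)$ on the half-line. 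Boundedness of $\cD_{L_1}$ forces $t_1^*<\infty$, so $t_1^*X\in\partial\cD_{L_1}\subseteq\partial\cD_{L_2}$ yields $t_2^*=t_1^*$; hence $\cD_{L_1}(n)$ and $\cD_{L_2}(n)$ — convex sets containing $0$ — meet every half-line from $0$ in the same segment, and therefore coincide. In particular $\cD_{L_2}$ is bounded, $\tau$ is a bijection, and $\tau^{-1}$ is precisely the map attached by Definition~\ref{def:tau} to the now-admissible pair $(L_2,L_1)$; applying (2) to both pairs shows $\tau$ and $\tau^{-1}$ are completely positive, so $\tau$ is a unital complete order isomorphism, hence completely isometric (apply the characterization $\|Y\|\le1\iff\begin{bmatrix}I&Y\\ Y^*&I\end{bmatrix}\succeq0$ to $\tau_{2n}$).
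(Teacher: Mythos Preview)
Your proof is correct and follows essentially the same route as the paper: the conjugation trick $(M_0^{-1/2}\otimes I)\,T\,(M_0^{-1/2}\otimes I)$ for (1)--(2), and for the converse of (3) the passage through $\cD_{L_1}=\cD_{L_2}$ followed by the $2\times 2$ block characterisation of the norm. The differences are packaging: you inline the argument that $T\succ0\Rightarrow M_0\succ0$, which the paper isolates as Lemma~\ref{lem:bounded34}, and you prove $\partial\cD_{L_1}\subseteq\partial\cD_{L_2}\Rightarrow\cD_{L_1}=\cD_{L_2}$ directly via the half-line argument, which the paper defers to Proposition~\ref{prop:eqDom}. Your treatment of the converse of (3) is in fact a bit more explicit than the paper's: where the paper asserts ``since $W$ has a kernel, so does $\tau(W)$'' (a step that tacitly needs order-reflection, hence Proposition~\ref{prop:eqDom} and part~(2) applied in both directions), you spell out that $\tau^{-1}$ is itself the map attached to the admissible pair $(L_2,L_1)$ and is therefore completely positive.
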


We remark that the binding condition \eqref{eq:bind2} used in (3)
implies 
\eqref{eq:bind1} used in (2) 
under the boundedness assumption; see
Proposition \ref{prop:eqDom}.  The
proposition says that the relaxed domination problem
(see the abstract) can be restated in terms
of complete positivity, under a boundedness assumption.  
Conversely, suppose $\cD$ is a unital (self-adjoint) subspace
of $\SRdd$ and $\tau:\cD\to \mathbb S\mathbb R^{d^\prime\times d^\prime}$
is completely positive. Given a basis $\{I,A_1,\dots,A_g\}$ 
for $\cD$, let $B_j=\tau(A_j)$. Let
\[
  L_1 = I+\sum A_j x_j, \ \ \ L_2=I+\sum B_j x_j.
\]
 The complete positivity of $\tau$ implies, if
  $L_1(X)\succeq 0$, then $L_2(X)\succeq 0$ and
 hence $\cD_{L_1}\subseteq \cD_{L_2}$. Hence
 the completely positive map $\tau$ (together with a choice
 of basis) gives 
 rise to an LMI domination. 

To prove the theorem we need a lemma.

\begin{lem}\label{lem:bounded34}
Let $L=I+\sum_{j=1}^\tg A_jx_j\in\SRdd\ax$ be a monic linear pencil
with bounded matricial LMI set $\cD_L$.
Then:
\begin{enumerate}[\rm (1)]
\item
  if
 $\Lambda\in \R^{n\times n}$ and $X\in (\SR^{n\times n})^g,$
  and if
\beq\label{eq:T1}
S:=  I\otimes \Lambda + 
L^{(1)}(X)
\eeq
  is symmetric, then  $\Lambda =\Lambda^*$;
\item if $S\succeq0$, then
$\Lambda\succeq 0$;
\item
 if
 $\Lambda\in \R^{n\times n}$ and $X\in (\SR^{n\times n})^g,$
  and if
\beq\label{eq:T2}
T:=  \Lambda \otimes I + \sum_{j=1}^g X_j\otimes A_j 
\succeq0
\eeq
then  $\Lambda\succeq0 $.
\een
\end{lem}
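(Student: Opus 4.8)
The plan is to exploit boundedness of $\cD_L$ to obtain, for each fixed direction, enough points in $\cD_L$ to force the claimed positivity. For part (1), I would argue as follows. Since $\cD_L$ is bounded, by Proposition \ref{prop:bounded12}(1) the truly linear part $L^{(1)}$ is nondegenerate; more usefully, by Proposition \ref{prop:bounded12}(2) the set $\{I, A_1,\dots,A_g\}$ is linearly independent. First I want to reduce to the scalar case. For $\Lambda$ and $X$ as given, set $S = I\otimes\Lambda + L^{(1)}(X)$. Taking any unit vector $e\in\R^n$ and compressing by $I_d\otimes e$ (acting $\R^d\to\R^{dn}$), one computes $(I_d\otimes e)^* S (I_d\otimes e) = \langle \Lambda e, e\rangle I_d + L^{(1)}(Z)$ where $Z = (\langle X_1 e, e\rangle, \dots, \langle X_g e, e\rangle)\in\R^g$; more generally, compressing with $I_d\otimes(ae+bf)$ for two vectors $e,f$ produces similar scalar-coefficient pencils, and from the symmetry of $S$ together with symmetry of each $A_j$ and linear independence of $\{I, A_1,\dots,A_g\}$, one extracts that the off-diagonal entries of $\Lambda$ must match, i.e. $\Lambda = \Lambda^*$. (Alternatively and more cleanly: $L^{(1)}(X)$ modulo the canonical shuffle is $\sum_j X_j\otimes A_j$ with each $X_j$ symmetric and each $A_j$ symmetric, hence symmetric; so $S$ symmetric forces $I\otimes\Lambda$ symmetric, hence $\Lambda=\Lambda^*$.)

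For part (2), assume $S\succeq 0$; I want $\Lambda\succeq 0$. Since $\cD_L$ is bounded, by Proposition \ref{prop:boundedn} so is $\cD_L(1)$, hence there is $N>0$ with $\|Z\|\le N$ for all $Z\in\cD_L(1)$; equivalently, $0$ is an interior point but the spectrahedron does not extend past radius $N$ in any direction. The key point is that for every direction $z\in\R^g$ with $\|z\|=1$ there is a positive scalar $t_z > 0$ with $t_z z\in\partial\cD_L(1)$, i.e. $L(t_z z) = I + t_z L^{(1)}(z)\succeq 0$ but singular; in particular $I + t L^{(1)}(z)\succeq 0$ for all $t\in[-t_{-z}, t_z]$, an interval around $0$ of some definite length (bounded below away from $0$ uniformly in $z$, but I only need existence). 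Now for a given vector $v\in\R^n$ I plug $X$ replaced by a scaled rank-one configuration: with $Z=(\langle X_1 v,v\rangle,\dots)$ as above, compressing $S\succeq 0$ by $I_d\otimes v$ gives $\langle\Lambda v,v\rangle I_d + L^{(1)}(Z)\succeq 0$. Rescaling, if $\langle\Lambda v,v\rangle < 0$ I can choose the scaling of the original data (or rather, use that $S\succeq 0$ is invariant under replacing $X$ by $sX$ and $\Lambda$ by $s\Lambda$ only along this compression — here one must be a little careful) to land outside the bounded spectrahedron, contradiction. The cleanest route: from $\langle\Lambda v,v\rangle I + L^{(1)}(Z)\succeq 0$, if $\langle\Lambda v,v\rangle = \lambda < 0$, divide by $-\lambda$ to get $-I + L^{(1)}(Z/(-\lambda))\succeq 0$, so $L^{(1)}(Z')\succeq I\succ 0$ for $Z' = Z/(-\lambda)$, whence $tZ'\in\cD_L(1)$ for all $t\ge 0$ (since $I + tL^{(1)}(Z') \succeq I + tI\succeq 0$), contradicting boundedness of $\cD_L(1)$ — unless $Z'=0$. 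If $Z'=0$ then $-I\succeq 0$, absurd. So $\langle\Lambda v,v\rangle\ge 0$ for all $v$, i.e. $\Lambda\succeq 0$.

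Part (3) reduces to the previous parts by the canonical shuffle: the matrix $T = \Lambda\otimes I + \sum_j X_j\otimes A_j$ is unitarily equivalent (via the flip $\R^n\otimes\R^d\to\R^d\otimes\R^n$) to $I\otimes\Lambda + \sum_j A_j\otimes X_j = I\otimes\Lambda + L^{(1)}(X) = S$, using that evaluation $L^{(1)}(X)=\sum A_j\otimes X_j$. Thus $T\succeq 0$ iff $S\succeq 0$, and part (2) gives $\Lambda\succeq 0$. (One should double-check the shuffle intertwines the two tensor orderings correctly, but that is routine.) I expect the main obstacle to be part (2): specifically, making rigorous the passage "from a bad vector $v$ produce a ray in $\cD_L(1)$," i.e. correctly using the homogenization/rescaling so that a negative value $\langle\Lambda v,v\rangle$ genuinely contradicts boundedness rather than merely boundedness of some compression. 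The trick above — dividing by $-\lambda$ to convert $L^{(1)}(Z)\succeq |\lambda| I$ into an unbounded ray — is what resolves it, and it crucially uses that $L$ is \emph{monic} so that scaling the linear part dominates the constant term $I$. I would make sure to state explicitly that $L^{(1)}(Z')\succeq I$ implies $L^{(1)}(Z')\succeq 0$ and hence $L(tZ')=I+tL^{(1)}(Z')\succeq 0$ for all $t\ge0$, giving the contradiction with $\cD_L(1)$ bounded.
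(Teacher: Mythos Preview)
Your proposal is correct and follows essentially the paper's approach. The paper argues (1) directly from $L^{(1)}(X)=\sum A_j\otimes X_j$ being symmetric (your ``alternatively'' argument), handles (2) by compressing $S$ at a vector $v$ with $\langle\Lambda v,v\rangle<0$ to obtain $L^{(1)}(Y)\succ 0$ for the scalar tuple $Y=(\langle X_j v,v\rangle)_j$ and hence an unbounded ray $tY\in\cD_L$ (your division by $-\lambda$ is a harmless extra step the paper skips), and reduces (3) to (2) via the canonical shuffle just as you do.
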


\begin{proof}
 To prove item (1), suppose
\[
  S = I\otimes \Lambda +\sum_{j=1}^g A_j\otimes X_j 
\]
is symmetric. Then
$
0=S-S^* = I\otimes(\Lambda-\Lambda^*).
$
Hence $\Lambda=\Lambda^*$.

For (2),
  if $\Lambda \not \succeq 0$, then there is a vector $v$ such that
$\langle\Lambda v,v \rangle < 0$.
 Consider the projection $P$ onto $\R^d \otimes \R v$, and
let 
$Y=(\langle X_jv,v\rangle)_{j=1}^g\in\R^g$. Then
 the corresponding
compression
$$
PSP= P(I \otimes \Lambda + L^{(1)}(X)  )P
     = I \otimes \langle\Lambda v,v\rangle  +L^{(1)}(Y  ) \succeq 0,
$$
which says that  $L^{(1)}(Y) \succ 0$.
This implies $0\neq t Y\in \cD_\cL$ for all $t>0$;
contrary to $\cD_\cL$ being bounded.

Finally, for (3),
we note that $T$ is, after applying a permutation (often called
the canonical shuffle),
of the form \eqref{eq:T1}. Hence $\Lambda\succeq0$ by (2).
\end{proof}

\begin{proof}[Proof of Theorem {\rm\ref{thm:tau}}]
In each of the three statements, the direction $(\Rightarrow)$ is
obvious. We focus on the converses.

  Fix $n\in\N$. Suppose $T\in \cS_1^{n\times n}$ is positive definite.
Then $T$ is of the form \eqref{eq:T2} for some
$\Lambda\succeq 0$ and $X\in(\SRnn)^g$.
By applying the canonical shuffle,
 \[
   S= I\otimes \Lambda +\sum A_{1,j}\otimes X_j
         \succ 0.
 \]
If we change $\Lambda$ to $\Lambda+\eps I$, the resulting
$T=T_\eps$ is in $\cS_1^{n\times n}$, so without loss of generality
we may assume $\Lambda\succ0$.
   Hence,
 \[
   (I\otimes \Lambda^{-\frac12}) S (I\otimes \Lambda^{-\frac12})
    =  I\otimes I +\sum A_{1,j}\otimes (\Lambda^{-\frac12} X_j
          \Lambda^{-\frac12})  \succ 0.
 \]
  Condition \eqref{eq:bind1} thus says that
 \[
      I\otimes I +\sum A_{2,j}\otimes (\Lambda^{-\frac12} X_j
          \Lambda^{-\frac12})  \succeq 0.
 \]
   Multiplying on the left and right by $I\otimes \Lambda^{\frac12}$
   shows
 \[
    I\otimes \Lambda +\sum A_{2,j}\otimes X_j
         \succeq 0.
 \]
Applying the canonical shuffle again, yields
\[
\tau(T_\eps)=
\Lambda\otimes I +\sum X_{j}\otimes A_{2,j}
         \succeq 0.
\]
  Thus we have proved, if $T_\eps\in \cS_1^{n\times n}$ and $T_\eps\succ 0$,
  then $\tau(T_\eps)\succeq 0$.  An approximation argument now
  shows if $T\succeq 0$, then $\tau(T)\succeq 0$ and
  hence $\tau$ is $n$-positive proving (1). Now (2) follows immediately.

  For (3), suppose $T\in \cS_1^{n\times n}$ has norm one. It follows that
\[
  W=\begin{bmatrix} I & T \\ T^* & I \end{bmatrix} \succeq 0.
\]
  From what has already been proved, $\tau(W)\succeq 0$
  and therefore $\tau(W)$ has norm at most one.  Moreover,
  since $W$ has a kernel, so does $\tau(W)$ and hence
  the norm of $\tau(T)$ is at least one. We conclude
  that $\tau$ is completely isometric.
\end{proof}

\begin{cor}[Linear Positivstellensatz]
 \label{cor:tau}
Let
\[
  L_j(x)=I+\sum_{\ell=1}^\tg A_{j,\ell} x_\ell
    \in \mbS\R^{d_j\times d_j}\ax, \quad j=1,2
\]
be monic linear pencils and assume
$\cD_{\cL_1}$ is bounded.
 If \eqref{eq:bind1} holds, that is, if $\cL_1(X) \succeq 0$
implies $\cL_2(X) \succeq 0$ for all $X$,
 then
there is $\mu\in\NN$ and an isometry $V\in\R^{\mu d_1\times d_2}$ such that
\beq\label{eq:linPos}
L_2(x)= V^* \big( I_\mu \otimes L_1(x) \big) V.
\eeq
Conversely, if $\mu,V$ are as above, then \eqref{eq:linPos}
implies \eqref{eq:bind1} holds.
\end{cor}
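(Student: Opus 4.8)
The plan is to read off the corollary from Theorem~\ref{thm:tau}(2) together with the dilation theory of completely positive maps. Since $\cD_{\cL_1}$ is bounded, Proposition~\ref{prop:bounded12}(2) guarantees that $\{I,A_{1,\ell}\mid\ell=1,\dots,g\}$ is linearly independent, so the unital $*$-map $\tau:\cS_1\to\cS_2$, $A_{1,\ell}\mapsto A_{2,\ell}$, is well defined (Definition~\ref{def:tau}); and hypothesis \eqref{eq:bind1} is exactly condition~(2) of Theorem~\ref{thm:tau}, hence $\tau$ is completely positive.

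Next I would extend and then represent $\tau$. The space $\cS_1\subseteq\mbS\R^{d_1\times d_1}$ is a unital self-adjoint subspace (an operator system), and the finite-dimensional matrix algebra containing the range is injective, so Arveson's extension theorem yields a unital completely positive map $\hat\tau:\R^{d_1\times d_1}\to\R^{d_2\times d_2}$ with $\hat\tau|_{\cS_1}=\tau$. (Over $\mathbb R$ one invokes the real form of Arveson's theorem, or complexifies, extends, and then replaces the extension by its self-adjoint symmetrization; neither changes the values on the real subspace $\cS_1$.) Now apply Choi's theorem to $\hat\tau$: its Choi matrix $C=\big(\hat\tau(E_{ij})\big)_{i,j=1}^{d_1}$ is positive semidefinite of size $d_1 d_2$, so $C=\sum_{k=1}^{\mu}\xi_k\xi_k^*$ with $\mu\le d_1 d_2$, and reshaping the $\xi_k$ into $d_1\times d_2$ matrices $V_k$ gives the Kraus form $\hat\tau(T)=\sum_{k=1}^{\mu}V_k^* T V_k$. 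Setting $V=\mathrm{col}(V_1,\dots,V_\mu)\in\R^{\mu d_1\times d_2}$ this reads $\hat\tau(T)=V^*(I_\mu\otimes T)V$, and unitality forces $V^*V=\hat\tau(I_{d_1})=I_{d_2}$, i.e., $V$ is an isometry.

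Finally, apply $\hat\tau$ to the pencil: since $\hat\tau$ is linear, unital and $\hat\tau(A_{1,\ell})=A_{2,\ell}$, evaluating entrywise with the $x_\ell$ treated as indeterminates gives $\hat\tau\big(L_1(x)\big)=I+\sum_\ell A_{2,\ell}x_\ell=L_2(x)$, so $L_2(x)=V^*\big(I_\mu\otimes L_1(x)\big)V$, which is \eqref{eq:linPos}. For the converse one compares coefficients in \eqref{eq:linPos} to get $I=V^*V$ and $A_{2,\ell}=V^*(I_\mu\otimes A_{1,\ell})V$; then for a tuple $X\in(\SRnn)^g$ the canonical shuffle turns $V\otimes I_n$ into an isometry $\tilde V$ with $L_2(X)=\tilde V^*\big(I_\mu\otimes L_1(X)\big)\tilde V$, and since $L_1(X)\succeq0$ implies $I_\mu\otimes L_1(X)\succeq0$ we conclude $\cD_{L_1}\subseteq\cD_{L_2}$, establishing \eqref{eq:bind1}.

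The genuinely substantive input is Theorem~\ref{thm:tau}, which we are allowed to assume; the only care needed here is with the real scalar field — making sure Arveson's extension and the Choi/Stinespring representation are carried out over $\mathbb R$ with an honestly self-adjoint, unital extension, so that $V$ emerges as a real isometry and $\mu$ is finite (indeed $\mu\le d_1 d_2$). I expect this bookkeeping, together with the routine ``canonical shuffle'' identification of $V^*(I_\mu\otimes L_1(x))V$ evaluated at a matrix tuple with $L_2(X)$, to be the only points requiring attention.
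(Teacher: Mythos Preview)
Your argument is correct. The overall architecture matches the paper's: invoke Theorem~\ref{thm:tau} to get complete positivity of $\tau$, extend $\tau$ to a unital completely positive map on the full matrix algebra $\R^{d_1\times d_1}$, then factor the Choi matrix to obtain the Kraus/Stinespring form $L_2(x)=V^*(I_\mu\otimes L_1(x))V$ with $V$ an isometry.

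The one genuine difference is in how the extension to $\R^{d_1\times d_1}$ is obtained. You appeal directly to Arveson's extension theorem, noting that over $\mathbb R$ one either cites a real version or complexifies and averages back. The paper deliberately avoids this: it passes through the duality between completely positive maps and positive linear functionals, defines $\tilde\sigma$ on $\cS_1\otimes\R^{d_2\times d_2}$ from $\tau$, observes $\tilde\sigma$ is positive because $\tau$ is completely positive, and then extends $\tilde\sigma$ to all of $\R^{d_1\times d_1}\otimes\R^{d_2\times d_2}$ by the Krein extension theorem for positive functionals on ordered vector spaces. This is more elementary and stays manifestly over $\mathbb R$ throughout, which is exactly why the authors chose it (they say so explicitly). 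Your route is shorter and standard, but leans on heavier black boxes whose real-scalar versions you have to either cite or manufacture; the paper's route trades that for a few lines of functional-analytic bookkeeping that also feed directly into the algorithmic Section~\ref{sec:comput}.
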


\begin{remark}\rm Before turning to the proof of the 
Corollary, we pause for a couple of remarks.
\ben[\rm (1)]
\item
Equation  \eqref{eq:linPos} can be equivalently written
as 
\beq\label{eq:linPss}
L_2(x)=\sum_{j=1}^\mu V_j^* L_1(x) V_j,
\eeq
where
$V_j\in\R^{d_1\times d_2}$ and
$V=
\mbox{col}(V_1,\ldots,V_\mu)$.
Since $\sum_{j=1}^\mu V_j^*V_j=I_{d_2}$, $V$ is an isometry.
Moreover, $\mu$ can be uniformly bounded 
(see the proof of Corollary \ref{cor:tau}, or Choi's characterization
\cite[Proposition 4.7]{Pau}
of completely positive maps between matrix algebras). In fact,
$\mu\leq d_1 d_2$.

\item
Corollary \ref{cor:tau} can be regarded as a Positivstellensatz
for linear (matrix valued) polynomials,
a theme we expand upon later below. 
Indeed, \eqref{eq:linPss} is easily seen to be equivalent to the more
common statement
\beq\label{eq:linPss2}
L_2(x)=B+\sum_{j=1}^\eta W_j^* L_1(x) W_j
\eeq
for some positive semidefinite $B\in\mbS\R^{d_2\times d_2}$
and $W_j\in\R^{d_1\times d_2}$.
\een
\end{remark}

If we worked over $\C$, the proof of Corollary \ref{cor:tau}
would proceed as follows. First invoke 
Arveson's extension theorem \cite[Theorem 7.5]{Pau} to extend
$\tau$ to a completely positive map $\psi$
from $d_1\times d_1$ matrices to $d_2\times d_2$ matrices, and
then 
apply the Stinespring
representation theorem \cite[Theorem 4.1]{Pau} to obtain
\beq\label{eq:taupi}
\psi(a)= V^* \pi(a) V,\quad a\in\CC^{ d_1\times d_1}
\eeq
for some unital
$*$-representation $\pi:\CC^{d_1\times d_1}\to\CC^{d_3\times d_3}$
and isometry (since $\tau$ is unital) $V:\CC^{d_1}\to\CC^{d_3}$.
As all representations of $\CC^{d_1\times d_1}$ are
(equivalent to) a multiple of the identity representation, i.e.,
$\pi(a)= I_\mu\otimes a$ for some $\mu\in\NN$ and all $a\in\CC^{d_1\times d_1}$,
\eqref{eq:taupi} implies \eqref{eq:linPos}.

However, in our case, the pencils $L_j$ have real coefficients 
and we want the isometry $V$
to have real entries as well. 
For this reason and to aid understanding of
this and
our algorithm Section S \ref{sec:comput}
we present a self-contained proof,
keeping all the ingredients real.

We prepare for the proof by reviewing some basic facts about
completely positive maps.
This  serves as a tutorial for LMI experts,
who often are unfamiliar with complete positivity.

Linear functionals  $\sigma: \R^{d_1\times d_1}\otimes  
\R^{d_2\times d_2}\to \mathbb R$ are in a one-one correspondence with 
mappings $\psi:\R^{d_1\times d_1}\to\R^{d_2\times d_2}$ given
by 
 \beq\label{eq:sig2psi}
  \langle \psi(E_{ij})e_a,e_b\rangle 
    = \langle \psi(e_i e_j^*)e_a,e_b\rangle 
    = \sigma(e_j e_i^* \otimes e_a e_b^*).
 \eeq
 Here, with a slight conservation of notation,
 the $e_i,e_j$ are from  $\{e_1,\dots,e_{d_1}\}$ and 
 $e_a,e_b$ are from $\{e_a,\dots,e_{d_2}\}$ which are
  the standard basis for $\mathbb R^{d_1}$ and $\mathbb R^{d_2}$
  respectively.

Now we verify that positive functionals $\sigma$ 
correspond precisely to completely positive $\psi$ and
give a nice representation for such a $\psi$.

\def\tr{\mbox{tr}}
  A positive functional $\sigma:
\R^{d_1\times d_1}\otimes  
\R^{d_2\times d_2} = \R^{d_1d_2\times d_1d_2}
\to \mathbb R$
  corresponds to a positive semidefinite $d_1d_2\times d_1d_2$ matrix $C$ via 
\[
 \sigma(Z)=\tr(ZC).
\]
  Express $C=(C_{pq})_{p,q=1}^{d_1}$ as a $d_1\times d_1$ matrix
  with $d_2\times d_2$ entries.  Thus, the $(a,b)$
  entry of the $(i,j)$ block entry of $C$ is
\[
   (C_{ij})_{ab}=\langle  C (e_j \otimes e_a), e_i\otimes e_b\rangle.
\]
  With 
  $Z=(e_j\otimes e_a)(e_i\otimes e_b)^*$ observe that
\[
  \langle \psi(E_{ij})e_a,e_b\rangle
   =  \sigma(e_je_i^* \otimes e_a e_b^*) 
   =  \tr(ZC) 
   =  \langle C (e_j\otimes e_a),e_i \otimes e_b\rangle 
   =  \langle C_{ij} e_a,e_b\rangle.
\]
 Hence, given $S=(s_{ij})=\sum_{i,j=1}^{d_1} s_{ij} E_{ij}$,
by the linearity of $\psi$,
\[
  \psi(S)=\sum_{i,j} s_{ij}C_{ij}.
\]
(The matrix $C$ is the Choi matrix for $\psi$, illustrated earlier in
\eqref{eq:choi2x2}.)
 The matrix $C$ is positive and thus factors (over the reals) as
 $W^*W$.  Expressing $W=(W_{ij})_{i,j=1}^{d_1}$ as a $d_1\times d_1$ matrix
 with $d_2\times d_2$ entries $W_{ij}$, 
\[
  C_{ij} =\sum_{\ell=1}^{d_1} W_{j\ell}^* W_{i\ell}.
\]
 Define $V_\ell = (W_{i\ell})$. Then we have
$\sigma$ positive implies
\beq\label{eq:reppsi}
 \psi(S)=  \sum_{i,j=1}^{d_2} s_{ij} C_{ij}
     =  \sum_{\ell=1}^{d_1}  \sum_{i,j=1}^{d_2} W_{i\ell}^* s_{ij}  W_{j\ell} 
     =  \sum_{\ell=1}^{d_1} V_\ell^* S V_{\ell} 
     = V^* \big ( (I_{d_1}\otimes S)\otimes I_{d_2} \big ) V,
\eeq
where $V$ denotes the column with $\ell$-th entry $V_\ell$.
Hence $\psi$ is completely positive.

\begin{proof}[Proof of Corollary {\rm\ref{cor:tau}}]
We now proceed to prove Corollary \ref{cor:tau}.
Given $\tau$ as in Theorem \ref{thm:tau},
  define 
a linear functional
$\tilde\sigma: \cS_1\otimes\R^{d_2\times d_2}\to \mathbb R$ 
as in correspondence \eqref{eq:sig2psi}
by
 \[
  \tilde\sigma(S\otimes Y) =\sum_{a,b} \langle Ye_b,e_a\rangle 
          \langle \tau(S)e_b,e_a\rangle.
 \]
  Suppose $Z=\sum S_k\otimes Y_k \in \cS_1 \otimes \R^{d_2\times d_2}$ 
is positive semidefinite
  and let $\mathbf e =\sum_{a=1}^{d_2} e_a\otimes e_a$.
 Since
the map $\tau_{d_1}=I_{d_1}\otimes \tau$, called
an ampliation of $\tau$,  
is positive,
\[
 0\le  \langle \tau_{d_1}(Z)\mathbf e, \mathbf e\rangle 
    =  \tilde\sigma(Z).
\]
  Thus $\tilde\sigma$ is positive and hence extends to a positive
  mapping $\sigma: \R^{d_1\times d_1}\otimes  \R^{d_2\times d_2}\to \mathbb R$ 
 by the Krein extension theorem, which in turn corresponds
to a completely positive mapping 
$ \psi:\R^{d_1\times d_1}\to\R^{d_2\times d_2}
$ 
as in \eqref{eq:sig2psi}.
It is easy to verify that $\psi|_{\cS_1}=\tau$.
By the above, 
\[
 \psi(S)= V^* \big ( ( I_{d_1}\otimes S)\otimes I_{d_2}\big ) V.
\]
Since $\psi(I)=I$, it follows that
 $V^*V=I$.
\end{proof}

\subsection{Equal matricial LMI sets}\label{sec:preequalD}

In this section we begin an analysis of the binding condition \eqref{eq:bind2}.
We present an equivalent reformulation:

\begin{prop}\label{prop:eqDom}
Let $L_1$, $L_2$ be monic linear pencils.
 If $\cD_{L_1}$ is bounded and
\eqref{eq:bind2} holds, that is, if
$\partial\cD_{L_1}\subseteq\partial\cD_{L_2}$,
then
$
\cD_{\cL_1}=\cD_{\cL_2}.
$
\end{prop}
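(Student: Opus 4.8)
The plan is to prove the two inclusions $\cD_{L_1}\subseteq\cD_{L_2}$ and $\cD_{L_2}\subseteq\cD_{L_1}$ separately, working at each fixed matrix level $n$ and using two elementary features of monic pencils: $\cD_{L_j}(n)\subseteq(\SRnn)^g$ is convex (it is the preimage of the PSD cone under the affine map $X\mapsto L_j(X)$) and it contains $0$ with $L_j(0)=I\succ0$. The basic device is the behaviour of $L_j$ along a ray through the origin: for $X\neq0$ the map $t\mapsto L_j(tX)=I+tL_j^{(1)}(X)$ is affine, so $\{t\ge0:L_j(tX)\succeq0\}$ is a closed interval $[0,t_j(X)]$ with $t_j(X)>0$ (since $L_j(tX)\to I\succ0$ as $t\downarrow0$); and whenever $t_j(X)$ is finite, $L_j(t_j(X)X)$ is positive semidefinite but singular by maximality of $t_j(X)$, hence $t_j(X)X\in\partial\cD_{L_j}$.

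For $\cD_{L_1}\subseteq\cD_{L_2}$ I would take $X\in\cD_{L_1}(n)$; if $X=0$ we are done because $L_2(0)=I\succ0$. Otherwise boundedness of $\cD_{L_1}$ forces $t^*:=t_1(X)<\infty$ (since $\|tX\|=t\|X\|\to\infty$), while $t^*\ge1$ because $L_1(X)\succeq0$. By the remark above $t^*X\in\partial\cD_{L_1}$, hence $t^*X\in\partial\cD_{L_2}$ by hypothesis, so in particular $L_2(t^*X)\succeq0$. Since $1/t^*\le1$, the identity $L_2(X)=(1-\tfrac1{t^*})I+\tfrac1{t^*}L_2(t^*X)$ exhibits $L_2(X)$ as a sum of positive semidefinite matrices, so $L_2(X)\succeq0$, i.e.\ $X\in\cD_{L_2}$.

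For the reverse inclusion I would argue by contradiction: suppose $X\in\cD_{L_2}(n)\setminus\cD_{L_1}(n)$. Then $X\neq0$, and since $L_1(X)\not\succeq0$ we have $1\notin[0,t_1(X)]$, so $t_0:=t_1(X)\in(0,1)$ and $t_0X\in\partial\cD_{L_1}\subseteq\partial\cD_{L_2}$; thus $L_2(t_0X)w=0$ for some $w\neq0$. But $L_2(t_0X)=(1-t_0)I+t_0L_2(X)\succeq(1-t_0)I\succ0$, because $t_0<1$ and $L_2(X)\succeq0$, so $L_2(t_0X)$ is invertible, contradicting $L_2(t_0X)w=0$. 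Hence no such $X$ exists and $\cD_{L_2}\subseteq\cD_{L_1}$. Combined with the first inclusion, $\cD_{L_1}=\cD_{L_2}$.

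I do not expect a real obstacle here; the one point that needs a little care is the claim that the ray endpoint $t_j(X)X$ is a genuine \emph{matricial} boundary point, i.e.\ that $L_j(t_j(X)X)$ is singular rather than merely positive semidefinite — this holds because positive definiteness there would persist for slightly larger $t$ by continuity, contradicting maximality. It is also worth noting that boundedness of $\cD_{L_1}$ is used only to guarantee $t^*<\infty$ in the first inclusion (so that the ray actually meets $\partial\cD_{L_1}$); the second inclusion uses nothing beyond $\partial\cD_{L_1}\subseteq\partial\cD_{L_2}$.
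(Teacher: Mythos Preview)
Your proof is correct and uses essentially the same idea as the paper: shoot a ray from the origin through $X$, use boundedness (resp.\ $X\notin\cD_{L_1}$) to locate the ray's exit point on $\partial\cD_{L_1}$, pass to $\partial\cD_{L_2}$ by hypothesis, and then use convexity along the ray. The paper packages this slightly differently --- it first gets $\cD_{L_1}\subseteq\cD_{L_2}$ from ``a closed bounded convex set is the convex hull of its boundary'' and then isolates the reverse inclusion as a general lemma about closed convex sets $C_1\subseteq C_2$ with $0$ interior and $\partial C_1\subseteq\partial C_2$ --- but the content of that lemma's proof is exactly your ray-endpoint contradiction. Your version has the minor advantage of exploiting the explicit affine structure $t\mapsto I+tL_j^{(1)}(X)$, which makes the ``boundary point is singular'' and convex-combination steps completely transparent; the paper's version has the minor advantage of recording a reusable convex-geometry fact.
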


The proof is an easy consequence of the following elementary
observation on convex sets.

\begin{lem}\label{lem:convexSets}
Let $C_1\subseteq C_2\subseteq \RR^n$ be closed convex sets, $0\in\Int C_1\cap\Int C_2$.
If $\partial C_1\subseteq \partial C_2$ then
$C_1=C_2$.
\end{lem}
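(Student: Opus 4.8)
The plan is to argue by contradiction: suppose $C_1 \subsetneq C_2$, so there exists a point $p \in C_2 \setminus C_1$. Since $C_1$ is closed and convex with $0 \in \Int C_1$, I would consider the line segment from $0$ to $p$. As $0 \in \Int C_1$ the segment starts inside $C_1$, and since $p \notin C_1$ the segment must exit $C_1$; concretely, there is a largest $t_0 \in (0,1)$ with $q := t_0 p \in C_1$, and for $t > t_0$ the point $tp$ lies outside $C_1$. Because $C_1$ is closed, $q \in C_1$, and since $q$ is a limit of points $tp \notin C_1$ (for $t$ slightly larger than $t_0$), $q \in \partial C_1$.

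Next I would use the hypothesis $\partial C_1 \subseteq \partial C_2$ to conclude $q \in \partial C_2$. The key step is then to derive a contradiction with the fact that $q$ is an \emph{interior} point of the segment $[0,p]$ whose endpoints both lie in $C_2$. Indeed, $0 \in \Int C_2$ and $p \in C_2$, and $q = t_0 \cdot p + 0$ with $t_0 \in (0,1)$; by a standard fact on convex sets (the segment joining an interior point of a convex set to any point of the set lies, except possibly for that endpoint, in the interior), $q = t_0 p + (1-t_0)\cdot 0 \in \Int C_2$. This contradicts $q \in \partial C_2$.

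I expect the only mildly delicate point to be the precise justification that the relative-interior-to-closure segment of a convex set lies in the interior — i.e., that $q \in \Int C_2$ — which requires $0 \in \Int C_2$ (not merely $0 \in C_2$) and is exactly where that hypothesis is used; this is a textbook lemma on convex bodies, so I would simply cite it or give the one-line scaling argument (a small ball around $0$ scaled by $t_0$ and translated along gives a ball around $q$ inside $C_2$). Everything else — the existence of $t_0$, closedness giving $q \in C_1$, and $q \in \partial C_1$ — is routine. Having reached a contradiction, we conclude $C_1 = C_2$, and then Proposition \ref{prop:eqDom} follows by applying this with $C_j = \cD_{L_j}(1)$ together with Proposition \ref{prop:boundedn} and the reduction to monic pencils with $0$ in the interior.
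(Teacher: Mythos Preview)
Your proof is correct and follows essentially the same approach as the paper's: both argue by contradiction, pick $p\in C_2\setminus C_1$, find the last point $q=t_0 p$ on $[0,p]$ lying in $C_1$, observe $q\in\partial C_1\subseteq\partial C_2$, and then derive a contradiction by showing $q\in\Int C_2$. The only cosmetic difference is that the paper makes the last step explicit by taking a small ball $D(0,\varepsilon)\subseteq C_1\subseteq C_2$ and noting $q\in\Int\bigl(\co(D(0,\varepsilon)\cup\{p\})\bigr)\subseteq\Int C_2$, whereas you invoke the equivalent textbook fact directly.
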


\begin{proof}
By way of contradiction, assume $C_1\subsetneq C_2$ and let
$a\in C_2\setminus C_1$. The interval $[0,a]$ intersects $C_1$
in $[0,\mu a]$ for some $0<\mu<1$. Then $\mu a\in\partial C_1
\subseteq \partial C_2$. Since $0\in\Int C_1$, $C_1$ contains
a small disk $D(0,\ep)$. Then $K:= \co (D(0,\ep) \cup\{a\})$
is contained in $C_2$ and $\mu a\in \Int K\subseteq\Int C_2$
contradicting $\mu a\in\partial C_2$.
\end{proof}

\begin{proof}[Proof of Proposition {\rm \ref{prop:eqDom}}]
Let $C_i:=\cD_{\cL_i}$, $i=1,2$. Then
$$
\partial C_i=\{X\in\cD_{\cL_i}\mid \cL_i(X)\succeq0,  \cL_i(X)\not\succ0\}.
$$
Since $C_1$ is closed and bounded, it is the convex hull of its boundary.
Thus by \eqref{eq:bind2}, $C_1\subseteq C_2$.
Hence the assumptions of Lemma \ref{lem:convexSets}
are fulfilled and we conclude $C_1=C_2$.
\end{proof}

\begin{example}
It is tempting to guess that $\cD_{\cL_1}=\cD_{\cL_2}$ implies $\cL_1$
and $\cL_2$ (or, equivalently, $L_1^{(1)}$ and $L_2^{(1)}$) are 
unitarily equivalent.
In fact, in the next subsection we will show this to be true
under a certain irreducibility-type assumption.
However, in general this fails for the trivial reason that the
direct sum of a representing pencil and an ``unrestrictive'' pencil
is also representative.

Let $L_1$ be an arbitrary monic linear pencil (with
$\cD_{\cL_1}$ bounded) and
$$L_2(x)=I+ \big(L_1^{(1)}(x)\oplus \frac 12 L_1^{(1)}(x)\big)=
\begin{bmatrix} I+L_1^{(1)}(x) &0\\ 0& I+\frac 12 L_1^{(1)}(x) \end{bmatrix}=
\begin{bmatrix} L_1(x) &0\\ 0& I+\frac 12 L_1^{(1)}(x) \end{bmatrix}
.$$
Then $\cD_{\cL_1}=\cD_{\cL_2}$ but $L_1$ and $L_2$ are obviously
not unitarily equivalent.
However,
$$
L_1(x)= \begin{bmatrix} I \\ 0 \end{bmatrix}^* L_2(x) \begin{bmatrix} I \\ 0 \end{bmatrix}
$$
in accordance with Corollary \ref{cor:tau}.

Another guess would be that under $\cD_{\cL_1}=\cD_{\cL_2}$, we
have $p=1$ in Corollary \ref{cor:tau}. However this
example also refutes that. Namely, there is no isometry
$V\in\R^{d_1 \times 2d_1}$
satisfying
$$
\begin{bmatrix} L_1(x) &0\\0& I+ \frac 12 L_1^{(1)}(x) \end{bmatrix} = L_2(x)= V^* L_1(x) V.
$$
(Here $L_1$ is assumed to be a $d_1\times d_1$ pencil.)
\end{example}

\subsection{Minimal $L$ representing $\cD_L$ are unique: Linear
Gleichstellensatz}
\label{subsec:minimal}\label{subsec:equalD}

Let $L=I+\sum A_ix_i$ be a $d\times d$ monic linear pencil and
$\cS= \span \{I, A_{\ell}\mid\ell=1,\ldots,\tg\}$.
In this subsection we explain how to
associate a monic linear pencil $\tL$ to $L$ with the following
properties:
\ben[\rm (a)]
\item
$\cD_\tcL=\cD_\cL$;
\item
$\tL$ is the minimal (with respect to the size of the defining
matrices) pencil satisfying (a).
\een

A pencil $\tL=I+\sum \tA_j x_j$ is a {\bf subpencil} of $L$
provided there is a nontrivial reducing subspace $\cH$ for $\cS$ such
that $\tA_j = V^* A_j V$, where $V$ is the inclusion of
$\cH$ into $\R^d$, where $d$ is the size of the matrices $A_j$.
The pencil $L$ is {\bf minimal} if there does not exist
a subpencil $\tL$ such that $\cD_\cL=\cD_{\tcL}$.

\begin{thm}
 \label{thm:minimal}
Suppose $L$ and $M$ are linear pencils of size $d\times d$
 and $e\times e$ respectively. If $\cD_\cL=\cD_M$
 is bounded and  both $L$ and $M$ are minimal,
 then $d=e$ 
  and there is a unitary $d\times d$ matrix
 $U$ such that $U^* L U=M$; i.e., $L$ and $M$
 are unitarily equivalent.

 In particular, all minimal pencils for a given matricial LMI set
 have the same size $($with respect to the defining
 matrices$)$ and this size is the smallest possible.
\end{thm}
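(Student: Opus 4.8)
The plan is to use the Linear Positivstellensatz (Corollary \ref{cor:tau}) in both directions together with a careful analysis of what minimality forces. Since $\cD_\cL = \cD_M$ is bounded, both inclusions $\cD_\cL \subseteq \cD_M$ and $\cD_M \subseteq \cD_\cL$ hold; moreover boundedness of $\cD_M$ also holds and $\{I, A_j\}$, $\{I, B_j\}$ are each linearly independent by Proposition \ref{prop:bounded12}(2). Applying Corollary \ref{cor:tau} twice yields isometries $V \in \R^{\mu d \times e}$ and $W \in \R^{\nu e \times d}$ with $M(x) = V^*(I_\mu \otimes L(x))V$ and $L(x) = W^*(I_\nu \otimes M(x))W$. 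The first main step is to substitute one into the other: write $L(x) = W^*(I_\nu \otimes V^*)(I_{\nu\mu} \otimes L(x))(I_\nu \otimes V)W$, so that setting $T := (I_\nu \otimes V)W \in \R^{\nu\mu d \times d}$ (an isometry) we get $L(x) = T^*(I_{\nu\mu}\otimes L(x))T$ for all $x$. Comparing the coefficient of $I$ gives $T^*T = I$ (already known), and comparing the coefficient of $x_j$ gives $T^*(I_{\nu\mu}\otimes A_j)T = A_j$ for each $j$; equivalently, $T$ intertwines $A_j$ with $I_{\nu\mu}\otimes A_j$ up to the compression $T^*(\cdot)T$.

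The second step is to extract from this ``roundtrip is a compression'' identity that $T$ is in fact (the inclusion of) a reducing subspace. Here I would argue as follows. The range $\mathcal{R} := \Ran T \subseteq \R^{\nu\mu d}$ satisfies $T^* P_{\mathcal{R}}(I_{\nu\mu}\otimes A_j) P_{\mathcal{R}} T = T^*(I_{\nu\mu}\otimes A_j)T$ where $P_{\mathcal R} = TT^*$; this forces $(I_{\nu\mu}\otimes A_j)\mathcal{R} \subseteq \mathcal{R}$ once one knows the ``norm is preserved'' — that is, one uses the standard fact (which also underlies the proof of Theorem \ref{thm:tau}(3)) that if a compression $T^* c\, T = T^* c T$ agrees with the full operator in the sense that no norm is lost, and this holds simultaneously with $T^* T = I$, then $\mathcal R$ reduces $c$. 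Concretely: for a symmetric $c$, $\|T^* c T\| \le \|c\|$ always, and equality of the compressed and uncompressed actions applied to an eigenvector-type argument (or: consider $c^2$, note $T^* c^2 T \succeq (T^* c T)^2 = $ [the square of the compression], and the reverse would follow from reducing) pins down that $\mathcal R$ is invariant, hence reducing since $c$ is symmetric. Applying this to each $A_j$, $\mathcal R$ reduces every $I_{\nu\mu}\otimes A_j$, so $\tL := $ the subpencil of $I_{\nu\mu}\otimes L$ cut down to $\mathcal R$ satisfies $\tL = T^*(I_{\nu\mu}\otimes L(x))T = L(x)$ after identifying $\mathcal R \cong \R^d$ via $T$. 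But a reducing subspace of $I_{\nu\mu}\otimes L$ of dimension $d$ that reproduces $L$ exactly is, after a unitary change of coordinates, of the form (a copy of $\R^d$ sitting inside one summand, or more precisely) an isometric copy of $\R^d$ on which the action is unitarily equivalent to $A_j$ itself; minimality of $L$ then forces this copy to carry $\cD_\cL$ — which it does automatically since $\tL = L$.

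The third step is the symmetric argument with the roles reversed to get $L$ and $M$ each a ``reducing restriction'' of a multiple of the other, and then to invoke minimality to conclude the dimension count $d = e$ and the unitary equivalence. From $M(x) = V^*(I_\mu\otimes L(x))V$ with $V$ an isometry: the range of $V$ need not reduce $I_\mu \otimes L$ in general, but minimality of $M$ says $M$ has no proper defining subpencil, and one shows that the smallest reducing subspace of $I_\mu\otimes L$ containing $\Ran V$ yields a pencil that still represents $\cD_{\cL} = \cD_M$ and compresses to something between; combined with minimality of $L$ (so that $I_\mu\otimes L$ ``is'' $\mu$ disjoint copies of the minimal $L$, each reducing) one deduces $\Ran V$ must itself be reducing, $V$ picks out a single copy, hence $d \le e$; symmetrically $e \le d$, so $d = e$ and $V$ (resp.\ $W$) is a genuine unitary identifying $L$ with $M$. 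The main obstacle I anticipate is exactly this last bookkeeping: making rigorous that minimality of $L$ implies the only reducing subspaces of $I_\mu\otimes L$ that still ``define'' $\cD_\cL$ are the coordinate copies of $\R^d$, and thus that $\Ran V$ is forced to be one of them rather than some skew subspace. This is essentially a statement that the minimal defining pencil is determined up to unitary equivalence and multiplicity — a ``uniqueness of the reduced Stinespring dilation'' phenomenon — and getting the real (as opposed to complex) version of that clean requires the self-contained compression arguments already developed for Lemma \ref{lem:bounded34} and Theorem \ref{thm:tau} rather than a black-box appeal to operator algebra theory.
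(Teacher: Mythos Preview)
Your plan has a genuine gap at the key step, and it is essentially the step you yourself flag as ``the main obstacle.'' From the roundtrip identity $L(x)=T^*(I_{\nu\mu}\otimes L(x))T$ you try to deduce that $\mathcal R=\Ran T$ reduces each $I_{\nu\mu}\otimes A_j$. The ``no norm is lost'' heuristic does not give this: for a single symmetric $c$, the identity $T^* c\, T = A$ with $\|A\|=\|c\|$ does \emph{not} force $\mathcal R$ to be $c$-invariant (all you get is $T^* c^2 T\succeq A^2$, not equality). What you actually need is that the unital completely positive map $\psi(S)=T^*(I_{\nu\mu}\otimes S)T$ on $\R^{d\times d}$, which agrees with the identity on the operator system $\cS_1=\span\{I,A_j\}$, is the identity on the whole $C^*$-algebra $C^*(\cS_1)$ --- i.e., that $\cS_1$ has the \emph{unique extension property} inside $C^*(\cS_1)$. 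That is a nontrivial statement about the boundary structure of $\cS_1$, not a consequence of the compression identity alone. Likewise, your claim that ``the only reducing subspaces of $I_\mu\otimes L$ that still define $\cD_\cL$ are the coordinate copies of $\R^d$'' is false as stated (for instance, when $C^*(\cS_1)$ is simple every subspace of the form $v\otimes\R^d$ with $v\in\R^\mu$ a unit vector is reducing and carries a copy of $L$), and even the correct version does not help until you know $\Ran V$ is reducing.

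The paper's proof supplies exactly the missing ingredient, but by a different route. It first characterizes minimality of $L$ algebraically (Proposition~\ref{prop:minimal}): $L$ is minimal iff every minimal reducing projection lies in $C^*(\cS)$ \emph{and} the \v Silov ideal of $\cS$ in $C^*(\cS)$ is trivial. With both $L$ and $M$ minimal, the completely isometric map $\tau:\cS\to\cT$ (from Theorem~\ref{thm:tau}(3)) connects two operator systems each sitting inside its $C^*$-envelope with trivial boundary ideal, and Arveson's theorem \cite[Theorem 2.2.5]{Arv1} then upgrades $\tau$ to a $*$-isomorphism $\rho:C^*(\cS)\to C^*(\cT)$. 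The structure theory of finite-dimensional real $C^*$-algebras (Propositions~\ref{prop:fdimcstar}, \ref{prop:realIso} and Remark~\ref{rem:realComplIso}) then shows $\rho$ is implemented by a real unitary, and the matching of minimal central projections gives $d=e$. In short: the step you cannot complete by compression arguments is handled by the \v Silov-ideal/boundary machinery, and that machinery (or an equivalent unique-extension-property argument) is unavoidable here.
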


\begin{example}
Suppose $L$ and $M$ are only minimal with respect to the
spectrahedra $\cD_L(1)$ and $\cD_M(1)$, respectively.
Then $\cD_L(1)=\cD_M(1)$ does not imply that $L$ and $M$
are unitarily equivalent.
For instance, 
let $L$ and $M$ be the two pencils studied in Example \ref{ex:nonScalar}.
Then both $L$ and $M$ are minimal, $\cD_L(1)=\cD_M(1)$,
but $L$ and $M$ are clearly not unitarily equivalent.
\end{example}

 The remainder of this subsection is devoted to the
 proof of, and corollaries to, Theorem \ref{thm:minimal}.
We shall see how $\cD_\cL$ is governed by the multiplicative
structure (i.e., the $C^*$-algebra) $C^*(\cS)$ generated by $\cS$ as
well as the embedding $\cS\hookrightarrow C^*(\cS)$.
For this we borrow heavily from Arveson's noncommutative
Choquet theory \cite{Arv1,Arv2,Arv3} and to a lesser
extent from  the
paper of the third author with Dritschel \cite{DM}.

We start with a basics of 
real $C^*$-algebras needed in the proof of Theorem \ref{thm:minimal}.
First, the well-known classification result.

\begin{prop}\label{prop:fdimcstar}
A finite dimensional real $C^*$-algebra is $*$-isomorphic
to a direct sum of real $*$-algebras of the form
$M_n(\R)$, $M_n(\C)$ and $M_n(\HH)$.
$($Here the quaternions $\HH$ are endowed with the standard involution.$)$
\end{prop}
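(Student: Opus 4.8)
The plan is to reduce to the case of a \emph{simple} algebra and then to identify it by inspecting the commutant of a faithful irreducible $*$-representation. First I would carry out the semisimple reduction. If $J$ denotes the Jacobson radical of the finite dimensional algebra $A$, then $J$ is its largest nilpotent ideal; since $(J^*)^k=(J^k)^*$ and $J^*$ is again a two-sided ideal, maximality forces $J=J^*$. For $a\in J$ the element $b=a^*a$ is then a self-adjoint element of $J$, hence nilpotent, while the $C^*$-identity gives $\|b^{2^m}\|=\|b\|^{2^m}$ for all $m$; thus $b=0$, i.e. $a=0$, so $A$ is semisimple. By Wedderburn--Artin, $A=\bigoplus_i B_i$ with each $B_i=M_{n_i}(D_i)$ simple and $D_i$ a finite dimensional real division algebra. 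The involution permutes the minimal two-sided ideals $B_i$, and it cannot interchange two distinct ones: if $*$ carried $B_i$ onto $B_j$ with $i\neq j$, then for $0\neq a\in B_i$ we would get $a^*a\in B_jB_i\subseteq B_i\cap B_j=0$, contradicting $\|a^*a\|=\|a\|^2$. Hence each $B_i$ is $*$-closed, so is a simple real $C^*$-algebra in its own right, and it suffices to treat a single simple summand $B$.

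For such a $B$ I would pick a pure state and form its GNS representation $\pi$ on a (necessarily finite dimensional) real Hilbert space $H$; then $\pi$ is a unital irreducible $*$-representation, and it is faithful since its kernel is a proper ideal of the simple algebra $B$. Identify $B$ with $\pi(B)\subseteq B(H)$. Its commutant $C=\pi(B)'$ is a $*$-subalgebra of $B(H)$ which, by irreducibility and Schur's lemma, is a division algebra, hence by Frobenius's theorem $*$-isomorphic to one of $\R$, $\C$, $\HH$. The finite dimensional double commutant (Jacobson density) theorem now gives $B=\pi(B)=C'$. Viewing $H$ as a module over the division ring $C$ it is free, $H\cong C^n$ with $\dim_\R H=n\dim_\R C$, and the commutant of $C$ acting on $C^n$ is $M_n(C^{\mathrm{op}})\cong M_n(C)$ (using $\C^{\mathrm{op}}=\C$ and $\HH^{\mathrm{op}}\cong\HH$). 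To match the involution with the standard conjugate-transpose, I would replace $\pi$ by a unitarily equivalent representation whose real inner product is invariant under the compact group of unit-norm elements of $C$ (average the inner product over that group) and choose a $C$-orthonormal basis of $H$; in those coordinates the adjoint on $B(H)$ restricts on $C'=M_n(C)$ to the conjugate-transpose, so $B\cong M_n(\R),M_n(\C)$, or $M_n(\HH)$ as a real $C^*$-algebra. Since a finite dimensional $C^*$-algebra carries a unique $C^*$-norm (namely $\|x\|^2$ equals the spectral radius of $x^*x$), this isomorphism is automatically isometric and $A=\bigoplus_i B_i$ is the asserted decomposition.

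The step I expect to be the main obstacle is the last one: carefully reconciling the abstract involution on the simple summand with the standard conjugate-transpose involution on $M_n(D)$, especially in the quaternionic case, and verifying that the unit-group averaging really produces a real inner product whose adjoint has the desired block form. An alternative that sidesteps some of this bookkeeping is complexification: $A_\C=\C\otimes_\R A$ is a finite dimensional complex $C^*$-algebra, hence $\bigoplus_j M_{m_j}(\C)$, equipped with a period-two conjugate-linear $*$-automorphism whose fixed-point algebra is $A$; classifying such conjugations (a fixed summand contributes $M_m(\R)$ or $M_{m/2}(\HH)$, a swapped pair contributes a copy of $M_m(\C)$ by restriction of scalars) yields the same list. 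All the remaining ingredients — semisimplicity of finite dimensional $C^*$-algebras, Wedderburn--Artin, Frobenius, the double commutant theorem, and existence of pure states — are standard and may simply be quoted.
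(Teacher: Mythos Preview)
The paper does not actually prove this proposition: it introduces it with the phrase ``the well-known classification result'' and states it without proof, so there is no argument in the paper to compare yours against.

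Your proposal is a correct and standard route to the result. The semisimplicity argument via the Jacobson radical and the $C^*$-identity is clean, and the observation that the involution cannot swap two distinct minimal ideals (since $a^*a$ would then land in a zero product) is the right way to reduce to the simple case. Identifying a simple summand through an irreducible $*$-representation, Schur's lemma, and Frobenius is exactly the expected line; the Jacobson density theorem then gives $\pi(B)=\mathrm{End}_C(H)\cong M_n(C)$. You are also right that the only point requiring care is matching the abstract involution with the conjugate-transpose on $M_n(C)$, and your averaging-over-the-unit-group device is adequate here (for $C=\HH$ one is averaging over $S^3$, for $C=\C$ over $S^1$). The complexification alternative you sketch is equally standard and perhaps slightly cleaner, since the classification of conjugate-linear period-two $*$-automorphisms of $\bigoplus_j M_{m_j}(\C)$ is a short case analysis; this is the approach most references take when they do write out a proof. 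Either way, the paper is content to quote the result, so your level of detail already exceeds what is required.
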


\begin{prop}\label{prop:realIso}
Let $\K\in\{\R,\C,\HH\}$ and let $\Phi:M_n(\K)\to M_n(\K)$ be
a real $*$-isomorphism.
\ben[\rm (1)]
\item
If $\K\in\{\R,\HH\}$, then there exists a unitary $U\in M_n(\K)$ 
with $\Phi(A)=U^*AU$ for all $A\in M_n(\K)$.
\item
For $K=\C$, there exists a unitary $U\in M_n(\C)$ 
with $\Phi(A)=U^*AU$ for all $A\in M_n(\C)$ or
$\Phi(A)=U^* \bar A U$ for all $A\in M_n(\C)$. 
$($Here $\bar A$ denotes the entrywise complex conjugate of $A$.$)$
\een
\end{prop}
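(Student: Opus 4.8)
The plan is to realize $\Phi$ (or a slight modification of it) as an inner automorphism of a matrix algebra over $\K$, and then to use compatibility with the involution to force the implementing invertible matrix to be unitary.

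\emph{Step 1: reduce to an inner automorphism.} Recall that $M_n(\R)$ and $M_n(\HH)$ are central simple over $\R$ (their center is $\R I_n$), while $M_n(\C)$ is central simple over $\C$, with center $\C I_n$. As an algebra automorphism, $\Phi$ maps the center onto itself. For $\K\in\{\R,\HH\}$ this is no constraint. For $\K=\C$, real-linearity of $\Phi$ together with $\Phi(iI_n)^2=\Phi(-I_n)=-I_n$ and $\Phi(iI_n)\in\C I_n$ forces $\Phi(iI_n)=\pm iI_n$; hence either $\Phi$ is $\C$-linear, or the map $\Psi(A):=\Phi(\bar A)$ is $\C$-linear, and in the latter case $\Psi$ is again a $\C$-algebra $*$-automorphism of $M_n(\C)$, since entrywise conjugation is multiplicative and commutes with the adjoint, i.e.\ $\overline{A^*}=(\bar A)^*$. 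In all cases we thus obtain an automorphism $\Psi$ of $M_n(\K)$ that is linear over the center of $M_n(\K)$ (with $\Psi=\Phi$ except in the conjugation case). Since every such automorphism of a matrix algebra over a division ring is inner (Skolem--Noether), there is an invertible $S\in M_n(\K)$ with $\Psi(A)=SAS^{-1}$ for all $A$.

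\emph{Step 2: make $S$ unitary.} Now impose $\Phi(A^*)=\Phi(A)^*$. When $\Psi=\Phi$ this reads $SA^*S^{-1}=S^{-*}A^*S^*$ for all $A$, so $S^*S$ commutes with every element of $M_n(\K)$ and is therefore central; being $S^*S$ with $S$ invertible it is positive definite, and a central positive definite matrix equals $\lambda I_n$ for some $\lambda\in\R_{>0}$. Replacing $S$ by $\lambda^{-1/2}S$ (which does not alter $SAS^{-1}$) we may assume $S^*S=I_n$, and then $U:=S^*$ is unitary with $\Phi(A)=SAS^*=U^*AU$. In the conjugation case the same computation, now using $\overline{A^*}=(\bar A)^*$, again yields $S^*S=\lambda I_n$, and after rescaling $\Phi(A)=S\bar AS^{-1}=U^*\bar AU$ with $U:=S^*$ unitary.

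The result is classical and there is no serious obstacle; the one place that genuinely requires care is the $\K=\C$ bookkeeping in Step 1. A ``real $*$-isomorphism'' is not a priori $\C$-linear, so one must first determine its action on the center $\C I_n$ (necessarily the identity or complex conjugation), and, in the conjugation case, check that precomposing with entrywise conjugation restores $\C$-linearity while keeping both the multiplicative and the $*$-structure, so that the inner-automorphism theorem still applies. Everything else, namely inner-ness of automorphisms of $M_n(\K)$ and the elementary fact that a unit implementing a $*$-automorphism has $S^*S\in\R_{>0}\,I_n$, is routine.
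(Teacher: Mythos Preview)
Your proof is correct and follows essentially the same approach as the paper: Skolem--Noether gives an inner automorphism, and compatibility with the involution forces the implementing matrix to satisfy $S^*S$ central, hence a positive scalar multiple of the identity, which after rescaling yields a unitary. Your treatment of the $\K=\C$ case via the action on the center $\C I_n$ is likewise the same as the paper's (the paper phrases it as ``$\Phi(i)$ is central and skew-symmetric, hence $\Phi(i)=\alpha i$ with $\alpha^2=1$'', which is your argument in slightly different words).
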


\begin{proof}
In (1), $M_n(\K)$ is a central simple $\R$-algebra. By the Skolem-Noether
theorem \cite[Theorem 1.4]{KMRT}, there exists an invertible matrix $U\in M_n(\K)$ with
\beq\label{eq:realIso1}
\Phi(A)=U^{-1}AU \quad \text{ for all }\quad A\in M_n(\K).
\eeq
Since $\Phi$ is a $*$-isomorphism,
$$
U^{-1} A^* U = \Phi(A^*) =
\Phi(A)^*= \left(U^{-1}AU \right)^* = U^* A^* U^{-*},
$$
leading to $UU^*$ being central in $M_n(\K)$. By scaling,
we may assume $UU^*=I$, i.e., $U$ is unitary.

(2) $\Phi(i)$ is central and a skew-symmetric matrix, hence
$\Phi(i)=\alpha i$ for some $\alpha\in\R$. Moreover,
$\Phi(i^2)=-1$ yields $\alpha^2=1$. So $\Phi(i)=i$ or 
$\Phi(i)=-i$. In the former case, $\Phi$ is a $*$-isomorphism
over $\C$ and thus given by a unitary conjugation as in (1).
If $\Phi(i)=-i$, then $\Phi$ composed with entrywise conjugation is 
a $*$-isomorphism
over $\C$. Hence there is some unitary $U$ with 
$\Phi(A)=U^* \bar A U$ for all $A\in M_n(\C)$.
\end{proof}

\begin{remark}\label{rem:realComplIso}
For $K\in\{\R,\C,\HH\}$, 
every real $*$-isomorphism
$\Phi:M_n(\K)\to M_n(\K)$ 
lifts to a 
unitary conjugation isomorphism 
$M_{dn}(\R)\to M_{dn}(\R)$, where 
$d=\dim_\R\K$.
By Proposition \ref{prop:realIso}, this is
clear if $K\in\{\R,\HH\}$. To see why this is true in the complex case
we proceed as follows.

Consider the standard real presentation of complex matrices,
induced by 
\beq\label{eq:Ciota}
\iota:\C\to M_2(\R), \quad 
a+i\, b \mapsto \begin{bmatrix} a & b \\ -b & a\end{bmatrix}.
\eeq
If the real $*$-isomorphism
$\Phi:M_n(\C)\to M_n(\C)$ 
is itself a unitary conjugation, the claim is obvious.
Otherwise 
$\bar\Phi$ 
is
 conjugation by some unitary $U\in M_n(\C)$ and thus has 
a natural extension to a
$*$-isomorphism
$$\check\Phi: M_{2n}(\R)\to M_{2n}(\R), \quad A \mapsto \iota(U)^* A \iota(U).$$
Then
$$
\hat\Phi: M_{2n}(\R)\to M_{2n}(\R), \quad A\mapsto
\left(I_n\otimes\begin{bmatrix}1& 0 \\ 0 & -1\end{bmatrix}\right)^{-1} \check\Phi(A) \left(I_n\otimes\begin{bmatrix}1& 0 \\ 0 & -1\end{bmatrix}\right)
$$
is a unitary conjugation $*$-isomorphism of $M_{2n}(\R)$ and 
restricts to $\Phi$ on $M_n(\C)$.
\end{remark}

Let $K$ be the biggest two sided ideal of $C^*(\cS)$ such
that the natural map
\beq\label{eq:isoProj}
C^*(\cS) \to C^*(\cS)/K, \quad a\mapsto\ta:=a+K
\eeq
is completely isometric on $\cS$. 
 $K$ is called the {\bf \v Silov ideal} (also the
{boundary ideal}) for $\cS$ in $C^*(\cS)$. Its existence 
 and uniqueness is nontrivial,
see the references given above.
The snippet \cite{Arv4} contains a streamlined, compared
to approaches which use injectivity, presentation
of the \v Silov ideal based upon the existence of
completely positive maps with the unique extension property.
While this snippet, as well as all of the references in
the literature of which we are aware, use complex scalars,
the proofs go through with no essential changes in the real
case. 

 A {\bf central projection} $P$ in $C^*(\cS)$ is a projection
 $P\in C^*(\cS)$ such that $PA=AP$ for all $A\in C^*(\cS)$
 (alternately $PA=AP$ for all $A\in \cS$).
 We will say that a projection $Q$ {\bf reduces} or is 
  {\bf a reducing projection for} $C^*(\cS)$ if $QA=AQ$ for all $A\in C^*(\cS)$.
 In particular, $P$ is a central projection if $P$ reduces
 $C^*(\cS)$ and $P\in C^*(\cS)$. 

\begin{prop}\label{prop:minimal}
 Let $L$ be a $d\times d$ truly linear pencil and
 suppose $\cD_\cL$ is bounded. 
 Then $L$ is minimal if and only if
\begin{enumerate}[\rm (1)]
 \item  every minimal reducing projection $Q$ is in fact 
   in $C^*(\cS)$; and 
 \item the \v Silov ideal of $C^*(\cS)$ is $(0)$.
\end{enumerate}
\end{prop}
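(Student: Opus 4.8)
The plan is to translate the minimality of $L$ into a statement about the \v Silov ideal and the reducing projections of the real $C^*$-algebra $C^*(\cS)$ generated by $\cS=\span\{I,A_1,\dots,A_g\}$, and then read off (1) and (2). The bridge is the following equivalence, which I would establish first. Let $\cH\subseteq\R^d$ be a reducing subspace for $\cS$ (equivalently, for $C^*(\cS)$), let $V\colon\cH\hookrightarrow\R^d$ be the inclusion, and let $\tL:=V^*LV$ be the associated subpencil. Since $\tL(X)=(V\otimes I)^*L(X)(V\otimes I)$, the inclusion $\cD_L\subseteq\cD_{\tL}$ holds automatically; hence, using that $\cD_L$ is bounded, Theorem \ref{thm:tau}(3) and Proposition \ref{prop:eqDom} together give
\[
\cD_{\tL}=\cD_L \iff \text{the compression } A\mapsto V^*AV \text{ is completely isometric on } \cS .
\]
(For ``$\Rightarrow$'': equality forces $\partial\cD_L\subseteq\partial\cD_{\tL}$, so Theorem \ref{thm:tau}(3) applies to the map $\tau\colon A_j\mapsto V^*A_jV$. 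For ``$\Leftarrow$'': complete isometry gives $\partial\cD_L\subseteq\partial\cD_{\tL}$, and Proposition \ref{prop:eqDom} then upgrades this to equality.) Thus $L$ is \emph{not} minimal precisely when there is a proper nonzero reducing subspace of $C^*(\cS)$ on which compression is completely isometric on $\cS$.

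Next I would bring in the \v Silov ideal $K$ for $\cS$ in $C^*(\cS)$. For a reducing projection $Q$ with range $\cH$, the compression $\pi_Q\colon a\mapsto QaQ$ is a surjective $*$-homomorphism $C^*(\cS)\to V^*C^*(\cS)V=C^*(V^*\cS V)$ with two-sided kernel $J_Q=\{a\in C^*(\cS)\colon Qa=0\}$, and $\pi_Q(\cS)=V^*\cS V$. Since a quotient $*$-homomorphism of $C^*$-algebras is automatically completely isometric, compression is completely isometric on $\cS$ if and only if the quotient $C^*(\cS)\to C^*(\cS)/J_Q$ is completely isometric on $\cS$, i.e.\ (by maximality of the \v Silov ideal) if and only if $J_Q\subseteq K$. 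Two cases will be used: if $Q\in C^*(\cS)$ then $Q$ is central and $J_Q=(I-Q)C^*(\cS)$; if $J_Q=(0)$ then $\pi_Q$ is a $*$-isomorphism, so compression is completely isometric on $\cS$ for free. (Existence, uniqueness and maximality of $K$ over $\R$ are as cited in the text, e.g.\ \cite{Arv4}.)

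Now the two implications. For ($\Leftarrow$), assume (1) and (2) and let $\cH$ be a nonzero reducing subspace with $\cD_{\tL}=\cD_L$; by the bridge, compression to $\cH$ is completely isometric on $\cS$, so $J_Q\subseteq K=(0)$. By (1) the projection $Q$ onto $\cH$ is a finite orthogonal sum of minimal reducing projections, all in $C^*(\cS)$, so $Q\in C^*(\cS)$ is central and $J_Q=(I-Q)C^*(\cS)=(0)$; evaluating at $I$ gives $I-Q=0$, so $\cH=\R^d$ and $L$ is minimal. For ($\Rightarrow$), assume $L$ is minimal. If $K\neq(0)$, let $Q=I-P_K$ where $P_K\in C^*(\cS)$ is the unit of $K$; then $Q$ is a proper ($P_K\neq0$) nonzero ($K\neq C^*(\cS)$, since a completely isometric quotient cannot kill $I$) central projection with $J_Q=K$, and compression to $\Ran Q$ — the completely isometric quotient $\cS\to C^*(\cS)/K$ followed by the $*$-isomorphism $C^*(\cS)/K\cong V^*C^*(\cS)V$ — is completely isometric on $\cS$, contradicting minimality; hence $K=(0)$, which is (2). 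If (1) fails, pick a minimal reducing projection $Q_0\notin C^*(\cS)$; using the classification of finite dimensional real $C^*$-algebras (Proposition \ref{prop:fdimcstar}) and the ensuing description of how $C^*(\cS)$ and $C^*(\cS)'$ act on $\R^d$, $Q_0$ lies in a simple summand of $C^*(\cS)'$ of dimension $>1$ (else $Q_0$ would be a minimal central projection of $C^*(\cS)$), and a block-by-block check shows $Qa=0\Rightarrow a=0$ on $C^*(\cS)$ for the proper nonzero reducing projection $Q:=I-Q_0$; thus $J_Q=(0)$, so by the second case above compression to $\Ran Q$ is completely isometric on $\cS$, again contradicting minimality. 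Hence (1) holds.

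I expect the conceptual skeleton (minimality $\Leftrightarrow$ no proper nonzero completely-isometric compression $\Leftrightarrow$ conditions on $K$ and on reducing projections) to be short and clean; the real work — and the main obstacle — is the block-by-block bookkeeping in the ($\Rightarrow$) proof of (1): writing $C^*(\cS)\cong\bigoplus_i M_{n_i}(\K_i)$ with $\K_i\in\{\R,\C,\HH\}$, locating each simple block's multiplicity space inside $\R^d$, describing $C^*(\cS)'$ accordingly, and checking that deleting one minimal reducing projection $Q_0\notin C^*(\cS)$ leaves a compression still $*$-isomorphic to $C^*(\cS)$. The complex and quaternionic summands need slightly more care than the real ones (compare Remark \ref{rem:realComplIso}) but raise no new difficulty. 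A secondary point to verify is that the \v Silov-ideal machinery (existence and maximality of $K$, unique-extension-property arguments) indeed transfers verbatim to the real case, which the text asserts.
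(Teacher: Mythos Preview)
Your argument is correct and rests on the same circle of ideas as the paper's proof: the equivalence between $\cD_{\tL}=\cD_L$ and complete isometry of the compression $\cS\to V^*\cS V$, together with the maximality of the \v Silov ideal. Your organization is somewhat cleaner, since you isolate the general bridge ``compression to $\Ran Q$ is completely isometric on $\cS$ $\Leftrightarrow$ $J_Q\subseteq K$'' and then read both directions off from it.

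The one substantive difference is in the proof that failure of (1) forces non-minimality. You compress to $\Ran(I-Q_0)$ and verify injectivity of $a\mapsto(I-Q_0)a$ by invoking the classification of finite dimensional real $C^*$-algebras and doing the multiplicity bookkeeping block by block. The paper instead picks the minimal \emph{central} projection $P\succeq Q_0$, observes that $C^*(\cS)P$ is simple (so the map $C^*(\cS)P\ni a\mapsto aQ_0$ is either zero or injective, and it cannot be zero since $PQ_0=Q_0\neq0$), and then compresses to $\Ran(I-P)\oplus\Ran Q_0$. This ideal-theoretic route sidesteps the structure theorem entirely and avoids the $\R/\C/\HH$ case analysis you flagged as the ``main obstacle''; your approach works too, but the paper's is shorter here. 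For the ($\Leftarrow$) direction the paper argues slightly differently as well---it picks a minimal reducing projection $P$ inside $\cK^\perp$, uses (1) to place $P\in C^*(\cS)$, and exhibits $C^*(\cS)P$ as a nonzero ideal on which the quotient is completely isometric, contradicting (2)---but this is equivalent to your argument via $J_Q=(I-Q)C^*(\cS)$.
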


\begin{proof}
Assume (1) does not hold
and let $Q$ be a given minimal nonzero reducing projection 
  for $C^*(\cS)$, which is not an element of $C^*(\cS)$.
  Let $P$ be a given minimal nonzero central projection
  such that $P$ dominates $Q$; i.e., $Q\preceq P$.
By our assumption, $Q\neq P$.

  Consider the real
  $C^*$-algebra $\cA= C^*(\cS)P$ as a real $*$-algebra
  of operators on the range $\mathcal H$ of $P$.  First we claim
  that the mapping $\cA \ni A \mapsto AQ$ is one-one.
  If not, it has a nontrivial kernel $J$ which is
  an ideal in $\cA$. 
  The subspace $\mathcal K=J\mathcal H$
  reduces $\cA$
   and moreover, because of finite dimensionality, 
   the projection $R$ onto $\mathcal K$ is in fact in
   $\cA$.
  Hence, $R$ is a central
  projection. 
  By minimality, $R=P$ or $R=(0)$.
  In the second case the mapping is one-one.
  In the first case,  $J\cH=\cH$ and
  thus $J=C^*(\cS)P$; i.e., the
  mapping $C^*(\cS)P \ni A \mapsto AQ$ is identically
  zero. In this case, the mapping 
  $C^*(\cS)P \ni A \mapsto A(I-Q)$ is completely isometric,
  contradicting the minimality of $L$. 
  Hence the map $\cA\ni A\mapsto AQ$ is indeed one-one.

  Therefore, the mapping $C^*(\cS)\ni A\mapsto A(I-P)+AQ$
  is faithful and in particular completely isometric. 
  Thus the restriction of our pencil to the span of 
  the ranges of $I-P$ and $Q$ produces a
  pencil $L'$ with $\cD_{L'}=\cD_L$,
  but of lesser dimension. Thus, we have proved,
  if (1) does not hold, then $L$ is not minimal. 

  It is clear that if the \v Silov ideal of $C^*(\cS)$ is nonzero,
  then $L$ is not minimal. Suppose $J\subset C^*(\cS)$
  is an ideal and the quotient
  mapping $\sigma:\cS\to C^*(\cS)/J$ is 
  completely isometric.  As before, let
  $\cK=J\mathbb R^d$ (where the pencil $L$ has size $d$).
  The projection $P$ onto $\cK$ is a central projection.
  Because for $S\in \cS$ we have both $\sigma(S)=\sigma(S-SP)$,
  and $\sigma$ is completely isometric, it follows that
  $S\mapsto S(I-P)$ is completely isometric. By the 
  minimality of $L$, it follows that $P=0$. 

Conversely, suppose (1) and (2) hold. If $L$ is not minimal, let $\tL$ 
denote a minimal subpencil
with $\cD_{\tL} = \cD_L,$ corresponding to a reducing subspace
$\cK \subsetneq \R^d$ for $\cS.$ 
 Let $Q$ denote the projection onto $\cK$ and $\cT$ denote $\{SQ\mid S\in
\cS\}.$ Note that the equality $\cD_{\tL}=\cD_L$ says exactly that the
 mapping $\cS \to \cT$ given by
$S\mapsto SQ$ is completely isometric.
In particular, if $R$ is the projection onto a reducing subspace which contains
$\cK$, then also $S\mapsto SR$ is completely isometric.

Let $P:\R^d\to\cK'$ denote any minimal orthogonal projection 
onto a reducing subspace of $\cK^\perp$.
By (1),
$P\in C^*(\cS)$, and hence $C^*(\cS)P$ is a (minimal)
two-sided ideal of $C^*(\cS)$. 
On the other hand, $(I-P)$ is the projection onto a reducing
subspace which contains $\cK$ and hence $S\mapsto S(I-P)$
is completely isometric.  Now let $S=(S_{i,j}) \in M_n(\cS)$ be given.
If $T=(T_{i,j})(I_n\otimes P) \in M_n(C^*(\cS))P$, then
\[
 \| S+T\| = \|S(I_n\otimes (I-P)) \oplus (S+T)(I_n\otimes P) \|
 \ge \| S(I_n\otimes (I-P))\| = \| S\|,
\]
where the last equality comes from the fact that $S\mapsto S(I-P)$
is completely isometric and the inequality from the fact that
the norm of a direct sum is the maximum of the norm of the summands.
Of course choosing $T=S(I_n\otimes P)$ it follows that the norm of $S$ in the quotient
 $C^*(\cS)/C^*(\cS)P$ is the same as $\| S\|$.
Hence the  induced map
$\cS\to C^*(\cS)/ C^*(\cS)P$
is completely isometric
and therefore  $C^*(\cS)P$
is contained in the \v Silov ideal of $\cS$, contradicting
(2).
\end{proof}

\begin{proof}[Proof of Theorem {\rm\ref{thm:minimal}}]
  Write $L=I+\sum A_j x_j$ and $M=I+\sum B_j x_j$ and
  let $C^*(\cS)$ and $C^*(\cT)$ denote the unital
 $C^*$-algebras generated by $\{A_1,\dots,A_g\}$
 and $\{B_1,\dots,B_g\}$ respectively.
 By Proposition \ref{prop:minimal}, both $C^*(\cS)$ and $C^*(\cT)$
 are reduced relative to $\cS$ and $\cT$ respectively; i.e.,
 the \v Silov ideals for $\cS$ and $\cT$ respectively are $(0)$.

  Moreover, for $\cQ$ and $\cP$ maximal families of 
  minimal nonzero reducing projections for $C^*(\cS)$ and $C^*(\cT)$
 respectively,
we use Proposition \ref{prop:minimal} to obtain
\[
   C^*(\cS)=\oplus_{Q\in\cQ} C^*(\cS)Q,\quad
   C^*(\cT)=\oplus_{P\in\cP} C^*(\cT)P.
\]
For later use we note that a minimal ideal in these $C^*$-algebras
is of the form $C^*(\cS)Q$ for $Q\in\cQ$, and
$C^*(\cT)P$ for $P\in\cP$, respectively.

 The unital linear $*$-map
$$
\tau: \cS\to \cT, \quad A_j\mapsto B_j
$$
is a completely isometric isomorphism by Theorem \ref{thm:tau} and maps between
reduced operator systems. By \cite[Theorem 2.2.5]{Arv1}, $\tau$
is induced by a $*$-isomorphism
$$
  \rho: C^*(\cS) \to C^*(\cT).
$$
  Since $\rho$ is an isomorphism
 of $C^*$-algebras and $C^*(\cS)P$ for $P\in\cP$, is a minimal ideal, 
\beq\label{eq:isostarmin}
\rho(C^*(\cS)P)=C^*(\cT)Q
\eeq 
for some $Q\in\cQ$.
The converse is true too.
That is, for each $Q\in\cQ$ there is a unique
$P\in\cP$ such that \eqref{eq:isostarmin} holds.
 We conclude that $d=e$.

By Proposition \ref{prop:realIso} and Remark \ref{rem:realComplIso} 
we also conclude that the $C^*$-isomorphism $\rho:C^*(\cS)P\to C^*(\cT)Q$
  must be implemented by a unitary mapping $\Ran P\to \Ran Q$.
\end{proof}

\begin{cor}\label{cor:choq2}
Let $L\in\mbS\R^{d\times d}\ax$ be a monic linear pencil with bounded
$\cD_L$ and
$\tL\in\mbS\R^{\ell\times\ell}\ax$ its minimal pencil. Then there is
a $(d-\ell)\times(d-\ell)$ monic linear pencil 
$J$ satisfying $J|_{\cD_\cL}\succeq0$
and a unitary $U\in\R^{d\times d}$ such that
$$
L(x)= U^* \begin{bmatrix}
\tL(x) \\ & J(x) \end{bmatrix}
U.
$$
\end{cor}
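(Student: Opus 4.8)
The plan is to read off the claimed block decomposition from the fact that the minimal pencil $\tL$ is itself obtained by compressing $L$ to a reducing subspace of $\cS=\span\{I,A_1,\dots,A_g\}$, and then to take $J$ to be the complementary compression. So the statement is really a repackaging of the structural description of minimal pencils already used in the proofs of Proposition \ref{prop:minimal} and Theorem \ref{thm:minimal}.

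First I would make precise that $\tL$ arises inside $\R^d$. Since $\cD_L$ is bounded, starting from $L$ and, as long as the current pencil is not minimal, replacing it by a proper subpencil with the same matricial LMI set, one reaches a minimal pencil in finitely many steps because the defining size drops at each stage. A subpencil of a subpencil is again a subpencil (if $\cK_2\subseteq\cK_1\subseteq\R^d$ with $\cK_1$ reducing for $\cS$ and $\cK_2$ reducing for $\cS|_{\cK_1}$, then $\cK_2$ is reducing for $\cS$), so this minimal pencil equals $V^*L(x)V$ for the inclusion $V\colon\cK\hookrightarrow\R^d$ of some reducing subspace $\cK$ for $\cS$, with $\cD_{V^*LV}=\cD_L$. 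By the Linear Gleichstellensatz (Theorem \ref{thm:minimal}), all minimal pencils for $\cD_L$ are unitarily equivalent, so after composing $V$ with a unitary on $\R^\ell$ I may assume $V^*L(x)V=\tL(x)$; in particular $\dim\cK=\ell$.

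Next I would exploit the symmetry of the $A_j$: because $\cK$ reduces each (symmetric) $A_j$, so does $\cK^\perp$. Letting $U\in\R^{d\times d}$ be the unitary taking the standard basis of $\R^d$ to an orthonormal basis adapted to $\R^d=\cK\oplus\cK^\perp$, conjugation by $U$ puts $L$ into block-diagonal form
\[
U^*L(x)U=\begin{bmatrix}\tL(x)&0\\0&J(x)\end{bmatrix},\qquad J(x):=I_{d-\ell}+\sum_{j=1}^g\bigl(A_j|_{\cK^\perp}\bigr)x_j,
\]
and $J$ is a monic linear pencil of size $(d-\ell)\times(d-\ell)$. Finally, for $X\in\cD_L$ one has $L(X)\succeq0$ and $(U\otimes I_n)^*L(X)(U\otimes I_n)=\tL(X)\oplus J(X)$, so reading off the lower block gives $J(X)\succeq0$; hence $J|_{\cD_L}\succeq0$, as required.

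I do not expect any real obstacle. The only point needing care is the bookkeeping in the second paragraph: one must check that the iterated passage to subpencils terminates at a subpencil of the \emph{original} $L$ (not merely of an intermediate pencil), and then invoke Theorem \ref{thm:minimal} to identify the resulting minimal pencil with the given $\tL$ up to the harmless unitary on $\R^\ell$. Everything after that — that $\cK^\perp$ reduces the $A_j$, and that a diagonal block of a positive semidefinite matrix is positive semidefinite — is routine.
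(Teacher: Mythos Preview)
Your proposal is correct and is precisely what the paper has in mind: its proof is the single sentence ``Easy consequence of the construction of $\tL$,'' and you have faithfully unpacked that construction (iterated passage to subpencils yields a reducing subspace $\cK$, Theorem~\ref{thm:minimal} aligns the resulting minimal pencil with the given $\tL$, and $J$ is the compression to $\cK^\perp$). There is no substantive difference in approach.
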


\begin{proof}
Easy consequence of the construction of $\tL$.
\end{proof}

\section{Computational algorithms}
\label{sec:comput}
 In this section we present several numerical algorithms using
 semidefinite programming (SDP) \cite{WSV}, based on
 the theory developed in the preceding section.
 However, one can read and implement these algorithms without
 reading anything beyond Section \ref{sec:algo} of the introduction.
 In each case, we first present the algorithm
 and then give the justification (which a user need not read).
 The following section, Section \ref{sec:more-cube}, 
 provides comparisons and refinements
 of the matricial matrix cube algorithm of Subsection \ref{subsec:cube}
 below. 

 Given $L_1$ and $L_2$ monic linear pencils
\beq
\label{eq:pencils12}
 L_j(x) = I+\sum_{\ell=1}^\tg A_{j,\ell} x_\ell \in\mbS\R^{d_j\times d_j}\ax , \quad j=1,2,
\eeq
with bounded matricial LMI set $\cD_{L_1}$,
we present an algorithm, {\bf the inclusion algorithm},
to test whether $\cD_{L_1}\subseteq\cD_{L_2}$.
Of course this numerical
test yields a sufficient condition
for containment of the spectrahedra $\cD_{L_1}(1) \subseteq \cD_{L_2}(1)$.
We refer the reader to Section \ref{subsec:bd} for a test of boundedness of
LMI sets, which works both for commutative LMIs and matricial LMIs, and
computes the radius of a matricial LMI set based 
 on the basic inclusion algorithm. 
 Subsection \ref{sec:reduce}
 contains a refinement of the  basic inclusion algorithm,
 in the case that either $L_1$ or $L_2$ is a direct sum
  of pencils of smaller size.  As an application,  
   we then present a matricial version of the classical matrix
 cube problem 
 in Section \ref{subsec:cube}. 
  Analysis of the matricial matrix cube
  algorithm are in Section   \ref{sec:more-cube} along with
 a comparison to the matrix cube  algorithm  
 of Ben-Tal and Nemirovski \cite{B-TN}. 
  There 
  further algorithms, which offer improved estimates, at
  the expense of additional computation, for the matrix cube
  problem are also discussed. 
The final subsection of this section
 gives a (generically successful) algorithm
for computation of a minimal representing pencil and
the \v Silov ideal, 
these being the only algorithms
whose statement is not self contained.

\subsection{Checking inclusion of matricial LMI sets}\label{subsec:incl}

\def\jus{ \noindent {\bf Justification.}}

\def\ss{\smallskip}
\def\ms{\medskip}
\def\bs{\bigskip}

\begin{center}
{\bf The inclusion algorithm}
\end{center}
Given: $A_{1,\ell}$ and  $A_{2,\ell}$ $ \text{ for }\quad \ell=1,\ldots,g$.
Let $\alpha_{p,q}^\ell$ denote the $(p,q)$ entry of
$A_{1,\ell}$.

\ss
\noindent
Solve
the following (feasibility) SDP:
\beq
\label{eq:tausdp}
(c_{pq})_{p,q=1}^{d_1} := 
C\succeq0, \qquad 
\sum_p^{d_1} c_{pp} = I_{d_2},
\qquad
\forall \ell=1,\ldots,g: \; 
\sum_{p,q}^{d_1}  \alpha_{pq}^\ell
c_{pq}=  A_{2,\ell},
\eeq
 for the unknown symmetric matrix $C$. 
 Since each 
  $c_{pq}$ is a $\R^{d_2 \times d_2}$
 matrix,
the  symmetric  matrix $C$ of unknown variables (reasonably termed the Choi matrix) 
is of size $d_1d_2\times d_1d_2$ and there are
$\frac 12 d_1d_2(d_1d_2+1)$ (scalar) unknowns and
$\frac 12 (1+g)d_2(d_2+1)$ (scalar) linear equality constraints.
This can be, in practice, solved numerically with standard SDP solvers. 
 In the next subsection, we show that if $L_1$ has special structure,
   then the number of ($C$) variables can be reduced, sometimes
  dramatically.  

\ss
\noindent
Conclude:
$\cD_{L_1}\subseteq\cD_{L_2}$ if and only if the SDP
\eqref{eq:tausdp} is feasible, i.e., has a solution.

\jus \ \
By Theorem \ref{thm:tau} and Corollary \ref{cor:tau}, $L_2$ is 
positive semidefinite on $\cD_{L_1}$ 
if and only if
there is a completely positive unital map 
\beq\label{eq:tauu}
\tau:\R^{d_1\times d_1}\to\R^{d_2\times d_2}
\eeq
 satisfying
\beq\label{eq:tauuu}
\tau(A_{1,\ell})=A_{2,\ell} \quad \text{ for }\quad \ell=1,\ldots,g.
\eeq

To determine the existence of such a map,
consider the Choi matrix $C=\big(\tau(E_{ij})\big)_{i,j=1}^{d_1}\in
(\R^{d_2\times d_2})^{d_1\times d_1}$ of 
$\tau$. (Here, $E_{ij}$ are the $d_1\times d_1$ elementary matrices.)
For convenience of notation we consider $C$ to be a $d_1\times d_1$ matrix
with $d_2\times d_2$ entries $c_{ij}$.
This is the matrix $C$ which appears in the algorithm.
It is well-known that $\tau$ is completely positive if and only if $C$ is positive semidefinite
\cite[Theorem 3.14]{Pau}.

Note that we can write
$A_{1,\ell}= \sum_{p,q} \alpha_{pq}^\ell E_{pq}$.
Then
$\tau(A_{1,\ell})=\sum_{p,q}\alpha_{pq}^\ell \tau(E_{pq}) = 
\sum_{p,q}\alpha_{pq}^\ell c_{pq}$.
This lays behind the last equation in \eqref{eq:tausdp}.
 If a solution $C$ to \eqref{eq:tausdp} has been obtained, 
 then a Positivstellensatz-type certificate for
the inclusion of the matricial LMI sets
$\cD_{L_1}\subseteq\cD_{L_2}$
can be obtained; cf.~Example \ref{ex:nonScalar2}
or the proof of Corollary \ref{cor:tau}.
\qed

\subsection{LMIs which are direct sums of LMIs}
 \label{sec:reduce}
   If either  pencil $L_j$ as in \eqref{eq:pencils12} 
   is given as a direct sum of pencils, 
   then the Choi matrix  $C$
  in the inclusion algorithm can be chosen 
  with many fewer unknowns,  reflecting this structure. 
  We start with $L_1$.
  
\def\del{\delta}
\begin{prop}
 \label{prop:reduce}
   Suppose $L_1 =\oplus_{\mu=1}^k M_\mu$, where $M_1,\dots, M_k$ are monic linear
   pencils,
\[
   M_\mu = I + \sum_{\ell=1}^g B_\ell^\mu x_\ell,
\]
  where the $B_j^\mu$ are of size $\del_\mu \times \del_\mu$.
  Thus, $A_{1,\ell} = \oplus_{\mu=1}^k  B_\ell^\mu$. 
  Let $\alpha_{pq}^{\ell,\mu}$ denote the $(p,q)$ entry 
  of $B_\ell^\mu$. 
  Then,
  $\cD_{L_1}\subseteq \cD_{L_2}$ if and only if there exists a
  symmetric matrix $C=\oplus_{\mu=1}^k C ^\mu$ such that 
\beq
\label{eq:tausdp-red}
\begin{split}
\forall \mu=1,\ldots,k: \qquad
C^\mu:= (c_{pq}^\mu)_{p,q=1}^{\del_\mu} & \succeq0,  \\
 \forall \ell =1,\ldots,g: \qquad 
\sum_{\mu=1}^k \sum_{p,q=1}^{\delta_\mu} 
\alpha_{pq}^{\ell,\mu} c_{pq}^\mu & =  A_{2,\ell}, \\
\sum_{\mu=1}^k   \sum_{p=1}^{\delta_\mu}  c_{pp}^\mu & = I_{d_2}.
\end{split}
\eeq
  Each $c^\mu_{pq}$ is an unknown  $d_2 \times d_2$ matrix
   and $(c^\mu_{pq})^*=(c^{\mu}_{qp})$. 
\end{prop}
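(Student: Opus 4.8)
The plan is to route everything through the completely positive (cp) map characterization of Theorem~\ref{thm:tau} and Corollary~\ref{cor:tau}, exploiting the single structural observation that makes the block decomposition relevant: since $L_1=\oplus_{\mu=1}^k M_\mu$, the operator system $\cS_1=\span\{I,A_{1,\ell}\mid \ell=1,\ldots,g\}$ sits inside the block-diagonal $*$-subalgebra $\cA:=\oplus_{\mu=1}^k \R^{\delta_\mu\times\delta_\mu}\subseteq\R^{d_1\times d_1}$. A unital cp map out of $\cA$ splits as $\sum_\mu \tau_\mu\circ\pi_\mu$, where $\pi_\mu\colon\cA\to\R^{\delta_\mu\times\delta_\mu}$ is the coordinate projection and each $\tau_\mu\colon\R^{\delta_\mu\times\delta_\mu}\to\R^{d_2\times d_2}$ is cp; the Choi matrices of the $\tau_\mu$ are exactly the PSD blocks $C^\mu$ and assemble into the block-diagonal Choi matrix $C=\oplus_\mu C^\mu$. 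Thus \eqref{eq:tausdp-red} is just the inclusion algorithm's SDP \eqref{eq:tausdp} restricted to Choi matrices supported on the block diagonal, and the content of the proposition is that this restriction loses nothing.

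For the implication $\cD_{L_1}\subseteq\cD_{L_2}\ \Rightarrow$ feasibility of \eqref{eq:tausdp-red}, I would invoke Corollary~\ref{cor:tau} (legitimate: $\cD_{L_1}$ is bounded and the inclusion holds) to write $L_2(x)=\sum_{j=1}^{m} V_j^* L_1(x) V_j$ with $V_j\in\R^{d_1\times d_2}$ and $\sum_j V_j^* V_j=I_{d_2}$, then partition each $V_j$ conformally with the block structure into blocks $V_j^\mu\in\R^{\delta_\mu\times d_2}$. Since $L_1(x)=\diag(M_1(x),\ldots,M_k(x))$, this yields $L_2(x)=\sum_{\mu=1}^k\sum_{j}(V_j^\mu)^* M_\mu(x) V_j^\mu$. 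Put $\tau_\mu(T):=\sum_j (V_j^\mu)^* T V_j^\mu$; each $\tau_\mu$ is cp and, being of the form $\sum_j W_j^*\cdot W_j$ over $\R$, has a symmetric PSD Choi matrix $C^\mu=(c^\mu_{pq})$, so $(c^\mu_{pq})^*=c^\mu_{qp}$ and $C:=\oplus_\mu C^\mu\succeq0$. Matching the constant term in the identity for $L_2$ gives $\sum_\mu\sum_p c^\mu_{pp}=\sum_\mu\tau_\mu(I_{\delta_\mu})=I_{d_2}$, and matching the coefficient of $x_\ell$ gives $\sum_\mu\sum_{p,q}\alpha^{\ell,\mu}_{pq}c^\mu_{pq}=\sum_\mu\tau_\mu(B^\mu_\ell)=A_{2,\ell}$; these are precisely the equality constraints of \eqref{eq:tausdp-red}.

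For the converse, given $C=\oplus_\mu C^\mu$ solving \eqref{eq:tausdp-red}, each $C^\mu\succeq0$ is the Choi matrix of a real cp map $\tau_\mu\colon\R^{\delta_\mu\times\delta_\mu}\to\R^{d_2\times d_2}$, via a real factorization $C^\mu=W^*W$ and the resulting representation $\tau_\mu(T)=\sum_\ell V_\ell^* T V_\ell$ (as derived around \eqref{eq:reppsi}, equivalently \cite[Theorem 3.14]{Pau}). Then $\tau:=\sum_\mu\tau_\mu\circ\pi_\mu\colon\cA\to\R^{d_2\times d_2}$ is cp; it is unital since $\tau(I_{d_1})=\sum_\mu\tau_\mu(I_{\delta_\mu})=\sum_\mu\sum_p c^\mu_{pp}=I_{d_2}$, and it carries $A_{1,\ell}=\oplus_\mu B^\mu_\ell$ to $\sum_\mu\tau_\mu(B^\mu_\ell)=\sum_\mu\sum_{p,q}\alpha^{\ell,\mu}_{pq}c^\mu_{pq}=A_{2,\ell}$. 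Restricting $\tau$ to $\cS_1\subseteq\cA$ gives exactly the unital map $\cS_1\to\cS_2$ of Definition~\ref{def:tau}, and the restriction of a cp map is cp; hence by the (easy) implication of Theorem~\ref{thm:tau}(2) we conclude $\cD_{L_1}\subseteq\cD_{L_2}$.

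I do not expect a serious obstacle: the argument is bookkeeping on top of Theorem~\ref{thm:tau} and Corollary~\ref{cor:tau}. The one place deserving care --- and really the whole point --- is the reduction in the first paragraph, that a certifying cp map may be taken to factor through the block-diagonal subalgebra $\cA$; in the forward direction this is delivered automatically by partitioning the $V_j$, and in the backward direction it is built in by construction. The remaining fuss is only in keeping everything over $\R$ (symmetry of the $C^\mu$, real factorizations), which is precisely the real bookkeeping already carried out in the discussion preceding the proof of Corollary~\ref{cor:tau}.
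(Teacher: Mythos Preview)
Your argument is correct and complete; it differs from the paper's proof in the one substantive step, namely how one passes from an arbitrary Choi matrix certifying $\cD_{L_1}\subseteq\cD_{L_2}$ to a block-diagonal one.

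The paper starts from the full Choi matrix $C$ of \eqref{eq:tausdp}, factors $C=W^*W$ with $W=[W_1\;\cdots\;W_k]$ partitioned conformally, and then averages the $2^{k-1}$ matrices obtained from the sign flips $W_\mu\mapsto\pm W_\mu$. The off-diagonal blocks cancel in the average while the linear constraints (which only involve the diagonal blocks, since $A_{1,\ell}$ is block diagonal) are preserved, yielding a block-diagonal $\hat C=\oplus_\mu W_\mu^*W_\mu$. You instead go through the Kraus form from Corollary~\ref{cor:tau}: writing $L_2=\sum_j V_j^* L_1 V_j$ and partitioning each $V_j$ conformally gives $V_j^*\big(\oplus_\mu M_\mu\big)V_j=\sum_\mu (V_j^\mu)^* M_\mu V_j^\mu$ directly, so the blocks $C^\mu$ arise as the Choi matrices of the cp maps $\tau_\mu=\sum_j (V_j^\mu)^*(\cdot)V_j^\mu$ without any averaging. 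Your route is a bit more conceptual---it makes explicit that the certifying cp map factors through the block-diagonal subalgebra $\cA=\oplus_\mu\R^{\delta_\mu\times\delta_\mu}$, which is exactly why the off-diagonal Choi variables are redundant---while the paper's sign-averaging stays entirely at the SDP level and is a trick that ports readily to other symmetry reductions. The converse direction is essentially the same in both proofs (a block-diagonal feasible $C$ is in particular feasible for \eqref{eq:tausdp}).
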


\begin{proof}
 The inclusion $\cD_{L_1}\subseteq \cD_{L_2}$ is equivalent to the existence of
 a Choi matrix $C$ satisfying the feasibility conditions 
\eqref{eq:tausdp}
of the inclusion 
 algorithm.  Thus $C$ is a $d_1\times d_1$ block matrix with $d_2\times d_2$
 entries. On the other hand, $d_1=\sum_\mu \del_\mu$ and the matrix 
 $C$ can be viewed
 as a block matrix $C=(C_{i,j})_{i,j}^k$ where $C_{i,j}$ is 
 $\del_i \times\del_j$ block matrix whose
  entries are  $d_2 \times d_2 $ matrices.
   Observe that for $i\ne j$, the entries of $C_{i,j}$
  do not appear as part of the linear constraint in the inclusion
  algorithm - they are unconstrained because our direct sum structure
  forces certain $\alpha_{pq}$ to be zero.  

  Since $d_1=\sum \delta_\mu$ and the matrix $C$ is a
  $d_1\times d_1$ block matrix with $d_2 \times d_2$ blocks 
  and is 
  positive semidefinite,  hence  there exist
  $d_1\times \delta_\mu$ block matrices $W_\mu$ 
  having $d_2 \times d_2$ entries  such that $C$ factors as 
\[
   C =W^* W = \begin{bmatrix} W_1^* \\ \vdots \\ W_k^*\end{bmatrix} 
        \begin{bmatrix} W_1 &\cdots & W_k \end{bmatrix}
\]
  Consider the set $\mathcal C$ 
  of $2^{k-1}$ matrices of the form
\[
   \begin{bmatrix} W_1^* \\ \pm W_2^* \\ \pm W_3^* \\ \vdots \\ \pm W_k^* \end{bmatrix}
     \begin{bmatrix} W_1 &\pm W_2 & \pm W_3 & \cdots & \pm W_k \end{bmatrix}.
\]
  Each $\tilde{C}\in \mathcal C$  
   solves the inclusion algorithm; i.e., validates 
  $\cD_{L_1}\subseteq \cD_{L_2}$.  
  Hence the matrix $\hat{C}$ obtained by averaging
  over $\mathcal C$ also validates the inclusion. Noting that,
  because each off diagonal entry of $\hat{C}$ is the average of
  $2^{k-2}$ terms $W_i^* W_j$ with $2^{k-2}$ terms $-W_i^* W_j$, 
  we get
  $\hat{C}$ is the block diagonal matrix with diagonal entries $W_j^* W_j$,
  which completes the proof. 
\end{proof}

  With the hypotheses of Proposition \ref{prop:reduce}, 
  the number of unknown variables in the LMI inclusion
  algorithm are greatly reduced.  
  Indeed, from 
  $\frac{1}{2} (d_1d_2 +1 )d_1d_2$, to  
  $$\frac{1}{2} \sum_{\mu=1}^k( d_2\delta_\mu+1) d_2 \delta_\mu.$$ 
   The number of equality constraints
  is still  $\frac12 (1+g)d_2(d_2 +1 ).$

  A reduction in both the number of variables and equality constraints
  occurs if $L_2$, the range linear pencil, in the inclusion algorithm
  has a direct sum structure.

\begin{prop}
 \label{range-sum}
    In the inclusion algorithm, if the pencil 
   $L_2$ is a direct sum; i.e., $L_2 =\oplus_{\mu=1}^k M_\mu$,
   where each 
\[
  M_\mu = I+\sum_1^g B^\mu_j x_j
\]
    is a monic linear pencil
   of size $\delta_\mu \times \delta_\mu$ $($so that
   $\sum \delta_\mu = d_2)$, then
   $\cD_{L_1}\subset \cD_{L_2}$ if and only if there exists
   a symmetric matrix $C=\oplus_{\mu=1}^k C ^\mu$ such that 
\beq
\label{eq:tausdp-red-range}
\begin{split}
\forall \mu=1,\ldots,k: \qquad
C^\mu:= (c_{pq}^\mu)_{p,q=1}^{d_1} & \succeq0,  \\
 \forall \ell =1,\ldots,g,\ \ \mu=1,\ldots,k: \qquad 
\sum_{p,q=1}^{d_\mu} 
\alpha_{pq}^{\ell} c_{pq}^\mu & =  B^\mu_\ell, \\
\forall \mu=1,\ldots,k: \qquad
\sum_{p=1}^{d_\mu}  c_{pp}^\mu & = I_{\delta_\mu}.
\end{split}
\eeq
  Each $c^\mu_{pq}$ is an unknown  $\delta_\mu \times \delta_\mu$ matrix
   and $(c^\mu_{pq})^*=(c^{\mu}_{qp})$. 
\end{prop}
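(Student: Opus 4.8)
The plan is to use the fact that a direct sum $L_2=\oplus_{\mu=1}^k M_\mu$ is positive semidefinite precisely when each summand is, so that the single inclusion problem $\cD_{L_1}\subseteq\cD_{L_2}$ decouples into the $k$ inclusion problems $\cD_{L_1}\subseteq\cD_{M_\mu}$, each of which is handled by the inclusion algorithm of \S\ref{subsec:incl}. Indeed, $L_2(X)\succeq0$ if and only if $M_\mu(X)\succeq0$ for every $\mu$, whence $\cD_{L_2}=\bigcap_{\mu=1}^k\cD_{M_\mu}$ and therefore $\cD_{L_1}\subseteq\cD_{L_2}$ if and only if $\cD_{L_1}\subseteq\cD_{M_\mu}$ for all $\mu$. (The standing assumption of this section that $\cD_{L_1}$ is bounded is in force, so Theorem~\ref{thm:tau} and Corollary~\ref{cor:tau} apply to each pair $(L_1,M_\mu)$.)

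First I would record what the inclusion algorithm says for a single pair $(L_1,M_\mu)$: by Theorem~\ref{thm:tau}(2), Corollary~\ref{cor:tau}, and the Choi matrix characterization of completely positive maps \cite[Theorem 3.14]{Pau}, the inclusion $\cD_{L_1}\subseteq\cD_{M_\mu}$ holds if and only if there is a symmetric $C^\mu=(c^\mu_{pq})_{p,q=1}^{d_1}$, a $d_1\times d_1$ block matrix with $\delta_\mu\times\delta_\mu$ entries, with $C^\mu\succeq0$, $\sum_{p=1}^{d_1}c^\mu_{pp}=I_{\delta_\mu}$, and $\sum_{p,q=1}^{d_1}\alpha^\ell_{pq}c^\mu_{pq}=B^\mu_\ell$ for $\ell=1,\dots,g$. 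These are exactly the three families of constraints in~\eqref{eq:tausdp-red-range} pertaining to the index $\mu$, and the relation $(c^\mu_{pq})^*=(c^\mu_{qp})$ is just the symmetry of $C^\mu$.

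Assembling the pieces: if $\cD_{L_1}\subseteq\cD_{L_2}$, pick such a $C^\mu$ for each $\mu$; then $C:=\oplus_{\mu=1}^k C^\mu$ satisfies~\eqref{eq:tausdp-red-range}. Conversely, if $C=\oplus_{\mu=1}^k C^\mu$ satisfies~\eqref{eq:tausdp-red-range}, then each $C^\mu$ meets the feasibility conditions of the inclusion algorithm for $(L_1,M_\mu)$, so $\cD_{L_1}\subseteq\cD_{M_\mu}$ for every $\mu$, and hence $\cD_{L_1}\subseteq\cD_{L_2}$ by the first paragraph.

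The only delicate point, and the natural spot for an error, is the bookkeeping linking this block-diagonal $C$ to the monolithic Choi matrix produced by applying the inclusion algorithm to $(L_1,L_2)$ directly: that matrix is a $d_1\times d_1$ block matrix with $d_2\times d_2$ entries, and one must verify that, after the canonical shuffle, the Choi matrix of a block-diagonal unital completely positive map $\tau=\oplus_\mu\tau_\mu$ is $\oplus_\mu C^\mu$, and --- for the forward direction --- that an arbitrary feasible $\tau$ may be replaced by its compression $\oplus_\mu P_\mu\tau(\,\cdot\,)P_\mu$ by the coordinate projections $P_\mu$ onto the summands of $\R^{d_2}$. This compression remains unital and completely positive and still satisfies $\tau(A_{1,\ell})=A_{2,\ell}$ because $A_{2,\ell}=\oplus_\mu B^\mu_\ell$ is already block diagonal, so no generality is lost. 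This is the analogue for the range pencil of the sign-averaging step in the proof of Proposition~\ref{prop:reduce}, but if anything simpler, since here positivity of a direct sum is literally the conjunction of positivity of its summands.
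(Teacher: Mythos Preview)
Your argument is correct. The paper actually states Proposition~\ref{range-sum} without proof, so there is nothing to compare against beyond noting that your approach is exactly the one the authors evidently had in mind: decouple via $\cD_{L_2}=\bigcap_\mu \cD_{M_\mu}$ and apply the inclusion algorithm of \S\ref{subsec:incl} to each pair $(L_1,M_\mu)$ separately. Your last paragraph, explaining why an arbitrary feasible $\tau$ may be replaced by its block-diagonal compression $\oplus_\mu P_\mu\tau(\cdot)P_\mu$, is the right justification and, as you say, is simpler than the sign-averaging used for Proposition~\ref{prop:reduce} because positivity of a direct sum is already the conjunction of positivity of the blocks.

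One small remark: in the displayed conditions~\eqref{eq:tausdp-red-range} the upper summation limits are written $d_\mu$, but the block index $p,q$ in $C^\mu=(c^\mu_{pq})_{p,q=1}^{d_1}$ runs from $1$ to $d_1$; you have silently (and correctly) read these sums as $\sum_{p,q=1}^{d_1}$ and $\sum_{p=1}^{d_1}$, consistent with the inclusion algorithm applied to $(L_1,M_\mu)$.
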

The count of unknowns is
  $\frac{1}{2} \sum_{\mu=1}^k( d_1 \delta_\mu+1) d_1 \delta_\mu$ 
and of scalar equality constraints is 
$ g k \delta_\mu( \delta_\mu+1)+k \delta_\mu( \delta_\mu+1).$

\subsection{Tightening the relaxation}
\label{sec:tight}
  There is a general approach to tightening
  the  inclusion  algorithm which relaxes
  $\cD_{L_1}(1) \subseteq \cD_{L_2}(1)$, and
  thus applies to the algorithms in the section, 
  based upon the following simple lemma.

\begin{lem}
 \label{lem:simple}
  Suppose $L_1,L_2$ and $M$ are 
linear pencils and let 
  $\hat{M}=L_1 \oplus M.$  If  
  $\cD_{L_1}(1)\subseteq \cD_{M}(1),$ then
 \[
    \cD_{\hat{M}} \subseteq \cD_{L_1}
\quad\text{ and }\quad
   \cD_{\hat{M}}(1) = \cD_{L_1}(1). 
 \]
  In particular, if $\cD_{\hat{M}}\subseteq \cD_{L_2}$, 
   then $\cD_{L_1}(1) \subseteq \cD_{L_2}(1)$. 
\end{lem}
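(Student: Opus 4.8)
The plan is to reduce the whole statement to one elementary observation: evaluation carries a direct sum of pencils to a block-diagonal matrix. Writing $\hat M=L_1\oplus M$ and recalling that $(A\oplus B)\otimes I_n=(A\otimes I_n)\oplus(B\otimes I_n)$, for every $n$ and every $X\in(\SRnn)^{\tg}$ one has, under the obvious orthogonal decomposition of the ambient space,
\[
  \hat M(X)=L_1(X)\oplus M(X).
\]
Hence $\hat M(X)\succeq0$ if and only if $L_1(X)\succeq0$ and $M(X)\succeq0$ both hold, so that at every matrix level
\[
  \cD_{\hat M}=\cD_{L_1}\cap\cD_M,\qquad\text{and in particular}\qquad
  \cD_{\hat M}(1)=\cD_{L_1}(1)\cap\cD_M(1).
\]

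Granting this, the three assertions follow by pure set theory. First, $\cD_{\hat M}=\cD_{L_1}\cap\cD_M\subseteq\cD_{L_1}$; note that no hypothesis on $M$ is used here. Second, using $\cD_{L_1}(1)\subseteq\cD_M(1)$ we get $\cD_{L_1}(1)\cap\cD_M(1)=\cD_{L_1}(1)$, whence $\cD_{\hat M}(1)=\cD_{L_1}(1)$. Third, if moreover $\cD_{\hat M}\subseteq\cD_{L_2}$, then specializing to $n=1$ gives $\cD_{\hat M}(1)\subseteq\cD_{L_2}(1)$, and substituting the equality just established yields $\cD_{L_1}(1)\subseteq\cD_{L_2}(1)$.

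There is no real obstacle here: the only point requiring any care is the bookkeeping of the identification under which the Kronecker product of a block-diagonal coefficient matrix with $I_n$ is again block diagonal, so that the inequality $\hat M(X)\succeq0$ literally decouples into the two conditions on $L_1(X)$ and $M(X)$. Once that is recorded the lemma is immediate, which is precisely what makes it a convenient lightweight device for tightening the inclusion algorithm.
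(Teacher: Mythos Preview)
Your proof is correct and follows essentially the same approach as the paper: both arguments reduce to the identity $\cD_{\hat M}=\cD_{L_1}\cap\cD_M$ (which you justify via the block-diagonal structure of the evaluation, while the paper simply asserts it), and then draw the three conclusions by elementary set-theoretic reasoning.
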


\def\hM{{\hat M}}

\begin{proof}
  The first part of the lemma is evident:
$$\cD_{\hat M}=\cD_{L_1}\cap \cD_{M}\subseteq\cD_{L_1}.$$
Likewise, $\cD_{\hat M}(1)=\cD_{L_1}(1)\cap \cD_{M}(1) 
= \cD_{L_1}(1)$, since $\cD_{L_1}(1)\subseteq \cD_{M}(1)$.
For the last statement note that 
$\cD_{\hat{M}}\subseteq \cD_{L_2}$
implies
$ \cD_{L_1}(1) = \cD_{\hat M}(1)\subseteq \cD_{L_2}(1)$.
\end{proof}

  This lemma tells us applying our inclusion algorithm to $\hM$
  versus $L_2$ is at least as accurate as applying it to $L_1$ versus
  $\hM$ and it quite possibly is more accurate.
  The lemma is used in the context of the matrix cube problem
  in Section \ref{sec:more-cube}. 

\subsection{Computing the radius of matricial LMI sets}\label{subsec:bd}

Let 
$L$ be a monic linear pencil,
\beq
\label{eq:pencils1}
 L(x) = I+\sum_{\ell=1}^\tg A_{\ell} x_\ell \in\mbS\R^{d\times d}\ax.
\eeq
We present an algorithm based on semidefinite programming 
to compute the radius of a matricial LMI set $\cD_{L}$ (and at the same
time check
whether it is bounded).
The idea is simply to use the test in Section \ref{subsec:incl}
to check if $\cD_L$ is contained in the ball of radius $N$.
The smallest such $N$ will be the matricial radius, and 
also an upper bound on the radius of the
spectrahedron $\cD_L(1)$.

\def\CR{\color{red}}

Let 
$$
\cJ_N(x)=
\frac 1N
\begin{bmatrix}
N & x^* \\
x & N I_g
\end{bmatrix}
= I+\frac 1N \sum_{j=1}^{g}  (E_{1,j+1}'+E_{j+1,1}') x_j \in\mbS\R^{(g+1)\times(g+1)}\ax
$$
be a monic linear pencil. 
 Here $E'_{ij}$ the $(g+1)\times (g+1)$ elementary matrix
  with a $1$ in the $(i,j)$ entry and zeros elsewhere. 

Then $\cD_L$ is bounded, and its matricial radius is $\leq N$, if and only if
$\cD_{\cJ_N}\supseteq\cD_L$.

\begin{center}
{\bf The matricial radius algorithm}
\end{center}
Let $\alpha_{r,s}^\ell$ denote the $(r,s)$ entry of
$A_{\ell}$, that is,
$A_{\ell}= \sum_{r,s} \alpha_{rs}^\ell E_{rs}$.
Solve the SDP (RM):
\begin{enumerate}[\rm (RM$_1$)\;]
\item[]\label{eq:ba0}
$\min 
b:= \sum_{r,s}
\alpha_{rs}^1 (c_{rs})_{1,2}\quad$ subject to\\
\item\label{eq:ba1}
$(c_{rs})_{r,s=1}^d:= C\succeq0$, 
\item\label{eq:ba2}
 $\sum\limits_{r=1}^d c_{rr}=I_{g+1},$
\item\label{eq:ba3}
$\forall \ell=1,\ldots, g, \; 
\forall p,q=1,\ldots,g+1:$ 
$$\sum_{r,s}
\alpha_{rs}^\ell (c_{rs})_{p,q}=0 \quad \text{ for }\quad (p,q)\not\in
\{ (1,\ell+1), \,(\ell+1,1)\},$$ 
\item\label{eq:ba4}
$
\sum_{r,s}
\alpha_{rs}^1 (c_{rs})_{1,2}=
\sum_{r,s}
\alpha_{rs}^1 (c_{rs})_{2,1}=
\sum_{r,s}
\alpha_{rs}^2 (c_{rs})_{1,3}=
\sum_{r,s}
\alpha_{rs}^2 (c_{rs})_{3,1}=
\cdots$ \\
${}\quad =
\sum_{r,s}
\alpha_{rs}^g (c_{rs})_{1,g+1}=
\sum_{r,s}
\alpha_{rs}^g (c_{rs})_{g+1,1}$
\een
for the unknown $C$; i.e., the $d^2$  unknown 
 $(g+1)\times (g+1)$ matrices $(c_{rs})$.
If the  optimal value of (RM)
is $b\in\R_{>0}$, then $\|X\|\leq \frac 1b$
for all $X\in\cD_L$, and this bound is sharp.

This SDP is always feasible (for $b=\sum_{r,s}
\alpha_{rs}^1 (c_{rs})_{1,2}=0$).
Clearly, $\cD_{L}$
is bounded
if and only if this SDP
has a positive solution.
In fact, any value of $b>0$ obtained gives
an upper bound of $\frac 1b$ for the norm of an element in $\cD_L$.
The size of the (symmetric) matrix of unknown variables 
is $d(g+1)\times d(g+1)$ and there are
$\frac 12 (g^3+4g^2+3g+4)$ (scalar) linear constraints.
To reduce the number of unknowns, solve the linear system
of $\frac 12g(g^2+3g-2)$ equations given in (RM$_3$).

Checking boundedness of $\cD_L(1)$ is a classical,
fairly basic semidefinite programming problem.
Indeed, given a nondegenerate monic linear pencil $L$, $\cD_L(1)$
is bounded (equivalently, $\cD_L$ is bounded)
if and only if the following SDP is infeasible:
$$
L^{(1)}(X)\succeq0,\quad \tr \left(L^{(1)}(X)\right)=1.
$$
(Here, $L^{(1)}$ denotes the truly linear part of $L$.)

However, computing the radius of $\cD_L(1)$ is harder.
Thus our algorithm, yielding a convenient upper bound on the radius,
 might be of broad interest, 
motivating us to spend more time describing its implementation.
The algorithm can be written entirely in a 
matricial form which is both elegant and easy to code
in MATLAB or Mathematica.
The matricial component of the algorithm is as follows.
Let ${\mathbf e}_n$ denote the vector of length $n$ with
all ones, 
let ${\mathbf E}_n= {\mathbf e}_n\otimes {\mathbf e}_n^t$ be the
$n\times n$ matrix of all ones.
Then 
(RM$_2$)
is (using $\hsp$ for the
Hadamard product) 
$$
\big( {\mathbf e}_{g+1} \otimes I_d \big)^t \big(
( I_d \otimes {\mathbf E}_{g+1})
\hsp C
 \big)  \big( {\mathbf e}_{g+1} \otimes I_d \big) = I_{g+1},
$$
while the left hand side of (RM$_3$) 
can be presented as
the $(p,q)$ entry of
$$
\big( {\mathbf e}_{g+1} \otimes I_d \big)^t \big(
( A_\ell \otimes {\mathbf E}_{g+1})
\hsp C
 \big)  \big( {\mathbf e}_{g+1} \otimes I_d \big).
$$
Equations 
(RM$_3$) 
and 
(RM$_4$)
give constraints on these matrices.

As an example we computed the matricial radius of an ellipse,
which for the example we computed agrees with the scalar radius.
 The corresponding
Mathematica notebook can be downloaded from\\
\centerline{\url{http://srag.fmf.uni-lj.si/preprints/ncLMI-supplement.zip}}

\ss

\jus \ \
As in the previous subsection, we need to determine whether
there is a completely positive unital map $\tau:\Rdd\to\R^{(g+1)\times(g+1)}$
satisfying $\tau(A_j)=\frac 1N (E_{1,j+1}'+E_{j+1,1}')$
for some $N$.
The Choi matrix here is $C=(\tau(E_{ij}))_{i,j}\in
(\R^{(g+1)\times (g+1)})^{d\times d}$.
Let $A_\ell=\sum_{r,s} \alpha_{rs}^\ell E_{rs}$.
Then the linear constraints we need to consider say that
$$
\tau(A_\ell)=\sum_{r,s} \alpha_{rs}^\ell c_{rs}
$$
has all entries $0$ except for the $(1,\ell+1)$ and $(\ell+1,1)$ entries which
are the same; indeed they are all equal to $\frac 1N$. Thus
we arrive at 
the feasibility SDP (RM) above.
\qed

\subsection{The matricial matrix cube problem}
\label{subsec:cube}
 This section describes our matricial matrix cube algorithm - a test 
  for  inclusion of the matricial matrix cube (as defined below) 
  into a given LMI set.  Variations on the
  algorithm and an analysis of the connection
  between this algorithm and the matrix cube algorithm of \cite{B-TN}
  is the subject of Section \ref{sec:more-cube}. 

Let 
$L\in\mbS\R^{d\times d}\ax$ be a monic linear pencil as in \eqref{eq:pencils1}.
We present an algorithm that computes the size $\rho$ of 
 the biggest matricial cube contained in $\cD_L$. 
That is, $\rho\in\R$ is the largest number with the
following property:
 if $n\in\N$ and 
$X\in (\SRnn)^g$ satisfies $\|X_i\|\leq\rho$ for all $i=1,\ldots,
g$, then $X\in\cD_L$.
When $X_i$ is in $\R^{1\times 1}$ this is 
the classical
matrix cube problem (cf.~Ben-Tal and Nemirovski \cite{B-TN}),
which they show is NP-hard.

First we need an LMI which defines the cube.
Let
$$
\cC_\rho(x)=\frac 1\rho \left( 
   \big(\oplus_{j=1}^g \rho - x_j\big)\bigoplus \big(\oplus_{j=1}^g \rho+x_j\big)
\right)\in\mbS\R^{2g\times 2g}\ax.
$$
Then $\cC_\rho(x)= I + \frac 1\rho
 \sum_{j=1}^g ( E_{jj}-E_{g+j,g+j}) x_j$, 
 where  $E_{i,j}$ is a the elementary $2g\times 2g$  
  matrix with a $1$ in the $(i,j)$ entry and zeros elsewhere,
  and
$$\cD_{\cC_\rho}
= \bigcup_{n\in\N} \left\{X\in(\SRnn)^g \mid \;  \|X_i\|\leq\rho\text{ for all }i=1,\ldots,g\right\}.
$$
This is a matricial cube.
Our algorithm uses the test in Section \ref{subsec:incl}
to compute the largest $\rho$ with $\cD_{\cC_\rho}\subseteq \cD_L$.
 It also takes advantage of the fact that $\cC_\rho$ 
  is a direct sum (of scalar-valued pencils)
    by using Proposition \ref{prop:reduce} with 
 $ k=2g$,
    $\delta_\mu=1$ and $d_2=d$.   This 
  immediately gives rise to the following SDP:

\begin{enumerate}[\rm (preMC$_1$)\;]
\item[]
\label{eq:cu1}
$\max \rho\quad$ subject to\\
\item
 $C^j\succeq0$,  \quad $j=1, \ldots, 2g$
\item\label{eq:precu2}
$\forall j=1,\ldots, g:\quad$
$
C^j-C^{g+j} = \rho A_j.
$
\item\label{eq:precu3}
 $\sum\limits_{j=1}^{2g} C^j =I_{d},$

\een
Each of the $2g$ symmetric matrices
$C^j$ is in ${\mathbb S} \R^{d \times d}$.

Next we make this algorithm more efficient by
solving the equality constraints 
(preMC$_2$)
to eliminate $C^{g+1}, \ldots, C^{2g}$ and 
(preMC$_3$)
to obtain
\beq
\label{eq:mc2eq}
C^g = \frac12 \Big(I - 2 \sum_{j=1}^{g-1} C^j + \rho \sum_{j=1}^g A_j  \Big)  
\eeq

With this, the above SDP reduces to

\begin{center}
{\bf The matricial matrix cube algorithm}
\end{center}

\begin{enumerate}[\rm (MC$_1$)\;]
\item[]
$\max \rho\quad$ subject to\\
\item
\label{eq:cu1a}
 $C^j\succeq0$,  \quad $j=1, \ldots, g-1$
\item
\label{eq:cu2a}
$C^j \succeq \rho A_j$  
\item
\label{eq:cu3a}
$I_d -  2 \sum_j^{g-1} C^j + \sum_j^{g-1}\rho A_j
 \pm \rho A_g   \succeq 0,$
\end{enumerate}
where each of the $g-1$ symmetric matrices
$C^j$ is in ${\mathbb S} \R^{d \times d}$.

This SDP is always feasible (with $\rho=0$). 
If its optimal value
is $\rho>0$, then $\cD_{\cC_\rho}\subseteq \cD_L$, and the obtained
upper 
bound for the size of the matricial cube is sharp.
There are
   $\frac12 (g-1) d(d+1)$ variables and
  all of the linear equality constraints have been eliminated.  
There are $2g$ matrix inequality constraints.

\begin{example}
\label{ex:nonScalar3}
Consider finding the largest  square embedded inside
the unit disk. 
 We consider the two pencils
$\De, \Ga$ from 
Example \ref{ex:nonScalar}, each of which represents the unit disk,
 since $\cD_{\De}(1)=\cD_{\Ga}(1)=\{(X_1,X_2)\in\R^2\mid X_1^2+X_2^2\leq1\}$.
It is clear that $\cD_{\cC_{\sqrt2/2}}(1)$
is the maximal square contained in the unit disk $\cD_{\De}(1)$. 
Indeed the biggest matricial cube in $\cD_{\De}$ is
$\cD_{\cC_{\sqrt2/2}}$, but 
the biggest matricial cube in $\cD_{\Ga}$ is
$\cD_{\cC_{  \frac 1 2 }}$.
For details, see the Mathematica notebook 
available at\\
\centerline{\url{http://srag.fmf.uni-lj.si/preprints/ncLMI-supplement.zip}}
We will revisit this example in Section \ref{sec:more-cube}.
\end{example}


\jus \ \
 A justification for the matrix cube algorithm based on the pre-algorithm
 has already been given.  So it suffices to justify the pre-matricial
  matrix cube algorithm.  
Let $B_j:= E_{j,j} - E_{g+j,g+j}\in\mbS\R^{2g\times2g}$.
 Taking advantage of the fact that $\cC_\rho$ 
 is the direct sum of $2g$ scalar linear pencils, we 
want to determine the biggest $\rho$ for which there exists
a completely positive unital map 
 $\tau: \oplus_1^{2g} \R^{1\times 1}\to\R^{d\times d}$
satisfying $\tau(B_j)=\rho A_j$, $j=1,\ldots,g$.
Suppose 
  $C=\oplus_{j=1}^{2g} (c^j) \in \oplus_1^{2g} (\R^{d\times d})^{1\times 1}$ 
 (because each $c^j$ is a $1\times 1$ block matrix whose
  entries are (symmetric) $d\times d$ matrices, there is
  no need for the indexing $c_{pq}^j$)
 is the corresponding Choi matrix as in Proposition \ref{prop:reduce}.
Then the linear constraint $\tau(B_j)=\rho A_j$ translates into
$
c_{1,1}^j - c_{1,1}^{g+j} = \rho A_j
$
which is 
(preMC$_2$).
\qed

\subsection{Minimal pencils
 and the \v Silov ideal}
\label{subsec:silov}

This section describes an algorithm aimed at constructing
from a given pencil $L$ a pencil $\tL$ of minimal size with
$\cD_L=\cD_\tL$.

\subsubsection{Minimal pencils}
Let 
$L$ be a monic linear pencil,
\beq
\label{eq:pencils13}
 L(x) = I+\sum_{\ell=1}^\tg A_{\ell} x_\ell \in\mbS\R^{d\times d}\ax
\eeq
with bounded $\cD_L$.
We present a probabilistic algorithm 
based on semidefinite programming that computes a minimal
pencil $\tilde L$ with the same matricial LMI set. 

The two-step procedure goes as follows. 
In Step 1, one uses 
the decomposition of a semisimple
algebra into a direct sum of simple algebras, a classical technique
in computational algebra, cf.~Friedl and Ro\' nyal \cite{FR}, 
Eberly and Giesbrecht \cite{EG}, or 
 Murota, Kanno, Kojima, and Kojima 
\cite{MKKK} for a recent treatment.
This yields a unitary matrix $U\in\Rdd$ that
simultaneously
 transforms
the $A_\ell$ into block diagonal form,
that is,
$$U^*A_\ell U= \oplus_{j=1}^s B_\ell^j\quad\text{ for all }\quad \ell.$$
For each $j$, the set $\{I,B_1^j,\ldots, B_g^j\}$ generates a simple
real algebra. Define the monic linear pencils
$$
L^j(x)=I+\sum_{\ell=1}^g B_\ell^j x_\ell, \quad
L'(x)=U^* L(x) U= \oplus_{j=1}^s L^j(x).$$ 
Given $\ell$, let $\tL_\ell=\oplus_{j\ne \ell} L^j$.
If there is no $\ell$ such that
\[
   L^\ell|_{\cD_{\tL_\ell}}\succeq 0,
\]
 (this can be tested using SDP as explained in Section \ref{subsec:incl})
 then the pencil is minimal. If there is such an $\ell$
 remove  the (one) corresponding block from $L'$
  to obtain a new pencil
  and repeat the process.
Once we have no more redundant blocks in $L'$, the obtained pencil
$\tilde L$ is minimal, and satisfies $\cD_{\tilde L}=\cD_L$ by
construction.

\subsubsection{\v Silov ideal}

 Thus subsection requires material from Section \ref{sec:arv}.
Using our results from
Section \ref{subsec:minimal} (cf.~Proposition \ref{prop:minimal})
 and Section \ref{subsec:incl},
one can compute the \v Silov ideal of a unital matrix algebra $\cA$
generated by symmetric matrices $A_1,\ldots,A_g\in\mbS\Rdd$.
Form the monic linear pencil
$$
L=I+\sum A_\ell x_\ell \in\mbS\Rdd\ax,
$$
and
compute the 
minimal pencil 
$$\tilde L=I+\sum \tilde A_\ell x_\ell$$
as in the previous
subsection. 
If $$\tilde S=\span \{I,\tilde A_{\ell}\mid\ell=1,\ldots,g\},$$
then the kernel of the canonical unital map
$$
\cA\to C^*(\tilde S), \quad A_\ell\mapsto\tilde A_\ell
$$
is the \v Silov ideal
of $\cA$.

\section{More on the matrix cube problem}
 \label{sec:more-cube}
  This section provides perspective on the inclusion algorithm
 by focusing on the matrix cube problem.
  The first subsection
 shows that the estimate based on the inclusion
 algorithm, namely the matricial
 matrix cube algorithm of Subsection \ref{subsec:cube},
 is essentially identical to that obtained by the algorithm
 of Ben-Tal and Nemirovski
 in \cite{B-TN}.

 Subsection \ref{sec:not-practical},
 illustrates the tightening procedure
 of Lemma \ref{lem:simple} on the matricial cube.

\subsection{Comparison with the algorithm in \cite{B-TN}}
 \label{sec:compare}
  Let $L=I+\sum_{\ell=1}^\tg A_{\ell} x_\ell\in\mbS\R^{d\times d}\ax$ 
 be a monic 
 linear pencil and recall the pencil $\cC_\rho$ and
  its corresponding positivity domain
 $\cD_{\cC_\rho}$, the matricial cube.
 In \cite{B-TN} the {\it verifiable sufficient condition} for the inclusion
 $\cD_\rho(1)\subseteq\cD_L(1)$ is the following: 
Suppose 
there exist symmetric
  matrices $B_1,\dots B_g$ such that
\beq\tag{S}\label{eq:nemBT}
B_j \succeq \pm A_j \text{ for all } j=1,2,\dots,g; \quad\text{and}\quad
 I-\rho \sum_j B_j \succeq 0
\eeq
holds. Then 
  $\cD_{\cC_\rho}(1)\subseteq \cD_L(1)$.

  The following proposition says that the 
  estimate of the largest cube contained
  in a given spectrahedron given by
  the matricial relaxation based upon
  the  matricial matrix cube algorithm  
  is the same as that based upon 
  condition \eqref{eq:nemBT}.

\begin{prop}
 \label{prop:we-are-nem}
    Given $\rho\in\R_{\geq0}$, 
   condition \eqref{eq:nemBT} holds 
   if and only if  $\cD_{\cC_\rho} \subseteq \cD_L$.
Moreover, there is an explicit formula for converting 
  condition \eqref{eq:nemBT}  to a feasible point for
the matricial matrix cube algorithm
  and vice-versa. 
\end{prop}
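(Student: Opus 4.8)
The plan is to reduce the proposition to the feasibility of the semidefinite program underlying the matricial matrix cube algorithm, and then to give explicit conversions in both directions between its feasible points and the matrices $B_1,\dots,B_g$ of condition \eqref{eq:nemBT}. The first half of the reduction is already available: by Theorem~\ref{thm:tau}(2), Corollary~\ref{cor:tau} and Proposition~\ref{prop:reduce} (with $\cC_\rho$ written as a direct sum of $2g$ scalar pencils), together with the justification in Section~\ref{subsec:cube}, the inclusion $\cD_{\cC_\rho}\subseteq\cD_L$ is equivalent to the existence of symmetric $d\times d$ matrices $C^1,\dots,C^{2g}\succeq0$ with $C^j-C^{g+j}=\rho A_j$ for $j=1,\dots,g$ and $\sum_{j=1}^{2g}C^j=I_d$ — feasibility of the pre-algorithm — and, after eliminating $C^{g+1},\dots,C^{2g}$ and $C^g$ via \eqref{eq:mc2eq}, to feasibility of the matricial matrix cube algorithm in the variables $C^1,\dots,C^{g-1}$. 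So it remains to prove that \eqref{eq:nemBT} holds if and only if such $C^j$ exist, with explicit conversions. First I would dispose of the case $\rho=0$, where both sides hold trivially ($\cD_{\cC_0}=\{0\}\subseteq\cD_L$ since $L(0)=I\succ0$, and \eqref{eq:nemBT} is met by any $B_j\succeq\pm A_j$), and then assume $\rho>0$.

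For the direction \eqref{eq:nemBT} $\Rightarrow$ feasibility, given symmetric $B_1,\dots,B_g$ with $B_j\pm A_j\succeq0$, I would form the slack $S:=I_d-\rho\sum_{j=1}^gB_j\succeq0$ and set
\[
C^1:=\tfrac{\rho}{2}(B_1+A_1)+\tfrac12 S,\qquad C^{g+1}:=\tfrac{\rho}{2}(B_1-A_1)+\tfrac12 S,
\]
and $C^j:=\tfrac{\rho}{2}(B_j+A_j)$, $C^{g+j}:=\tfrac{\rho}{2}(B_j-A_j)$ for $j=2,\dots,g$. Each $C^j$ is a sum of positive semidefinite matrices, hence $\succeq0$; the differences give $C^j-C^{g+j}=\rho A_j$ for all $j$ (the $\tfrac12S$ terms cancel when $j=1$); and $\sum_{j=1}^{2g}C^j=\rho\sum_{j=1}^gB_j+S=I_d$. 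Thus $(C^1,\dots,C^{2g})$ is feasible for the pre-algorithm, and its first $g-1$ entries are the sought feasible point of the matricial matrix cube algorithm.

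For the converse, given $C^1,\dots,C^{2g}\succeq0$ satisfying the pre-algorithm constraints — or a feasible $C^1,\dots,C^{g-1}$ of the matricial matrix cube algorithm, from which $C^g$ and the $C^{g+j}$ are recovered via \eqref{eq:mc2eq} and $C^{g+j}=C^j-\rho A_j$ — I would set $B_j:=\tfrac1\rho(C^j+C^{g+j})$ for $j=1,\dots,g$. These are symmetric, $B_j-A_j=\tfrac2\rho C^{g+j}\succeq0$ and $B_j+A_j=\tfrac2\rho C^j\succeq0$, so $B_j\succeq\pm A_j$; and $\rho\sum_{j=1}^gB_j=\sum_{j=1}^{2g}C^j=I_d$, whence $I_d-\rho\sum_jB_j=0\succeq0$. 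Hence \eqref{eq:nemBT} holds, and the two displayed substitutions are the explicit conversions asserted in the proposition.

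I do not expect a genuine obstacle here — the entire content is the two substitutions — but the point that needs care is combinatorial: the conversion must respect all three families of constraints simultaneously (positivity, the difference equations $C^j-C^{g+j}=\rho A_j$, and the normalization $\sum_jC^j=I_d$). In particular, in passing from \eqref{eq:nemBT} to the SDP the inequality slack $S\succeq0$ has to be absorbed in equal halves into a complementary pair $C^1,C^{g+1}$, so that it cancels in the difference equation while repairing the normalization; routing all of $S$ into a single $C^j$ would violate $C^j-C^{g+j}=\rho A_j$. Translating between the $2g$-variable pre-algorithm and the reduced $(g-1)$-variable algorithm is exactly the elimination already recorded in \eqref{eq:mc2eq}, so nothing further is needed there.
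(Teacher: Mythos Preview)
Your proof is correct and follows essentially the same route as the paper: both directions hinge on the substitutions $B_j=\tfrac1\rho(C^j+C^{g+j})$ and $C^j=\tfrac\rho2(B_j+A_j)$, $C^{g+j}=\tfrac\rho2(B_j-A_j)$. Your version is in fact slightly more careful than the paper's, which glosses over the passage from the inequality $I-\rho\sum_jB_j\succeq0$ to the equality constraint $\sum_jC^j=I_d$; your device of splitting the slack $S$ evenly between $C^1$ and $C^{g+1}$ handles this cleanly.
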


\begin{proof}
Suppose we have found the optimal $\rho$ and 
the corresponding
$C^j$
for $j=1, \ldots, 2g$,
in the matricial matrix cube algorithm.
From 
(preMC$_2$)  $\rho A_j = C^j- C^{g+j}$. 
 Set 
\[
 \rho B^j:=  C^j + C^{g+j}.
\]
 From (preMC$_1$),
\beq
\label{eq:BC}
\rho (B^j-A_j) = 2C^{j}\succeq 0  \quad  \text{ and }
\quad
 \rho (B^j+A_j) = 2 C^{j+g} \succeq 0.
\eeq
 Relation (preMC$_3$) gives
$I = \rho  \sum_j B^j$.
Thus we see $B^j $ and $\rho$ satisfy  \eqref{eq:nemBT}.

 Conversely, suppose $B_j, \rho$ are a solution 
 to \eqref{eq:nemBT}. 
 Solve \eqref{eq:BC} for $C^j,C^{g+j}$.  
 It is straightforward to check that these $C^j$
  satisfy the conditions (preMC$_j$) for $j=1,2,3$. 
\end{proof}

Now that we know the estimate provided by our relaxation is the same as that
of algorithm \eqref{eq:nemBT}, we look  at the computational cost.
 \eqref{eq:nemBT} has $\frac 12g d(d+1)$ unknowns and the number of 
$d\times d$
 matrix
 inequality constraints is  $(2g+1)$. 
 As we saw, our matricial matrix cube algorithm
  had $\frac 12 (g-1) d(d+1)$ unknowns and $2g$
 matrix ($d \times d $) inequality constraints,
 so the costs are a bit less than those of  \eqref{eq:nemBT}.
However,  \eqref{eq:nemBT} can be improved easily
by the general trick in the following remark
which removes an unknown and a constraint, 
thus making the cost of  \eqref{eq:nemBT} 
 the same as ours.

\begin{remark}
\rm
 \label{rem:trace0}
    If, in the $A_{1,\ell}$ in the inclusion
  algorithm of Section \ref{subsec:incl} all have trace $0$, then the condition
  $I_{d_2}-\sum_{p=1}^{d_1} c_{pp}=0$ is equivalent to the inequality
   $\Delta = I_{d_2}- \sum_{p=1}^{d_1} c_{pp}\succeq 0$, since 
   then the $c_{pp}$ can, without harm, be replaced by 
 $c_{pp}+\frac{\Delta}{d_1}.$  When presented with the inequality form
 we could convert it to equality, 
   then eliminate one variable by solving for it.
\end{remark}

\subsection{The lattice of 
inclusion algorithm relaxations for the matrix cube}
\label{sec:not-practical}
  A virtue of the general method of 
  the inclusion algorithm based  on matricial  relaxations is that,
  as alluded to in Section \ref{sec:tight}, it
  allows tightening in order to improve 
  estimates  (with added cost). 
  This subsection discusses and illustrates
  properties of  this tightening procedure,
 mainly as an introduction to a topic that
 might merit  further study.
 We do show in an example that
 tightening can produce an improved estimate.

\subsubsection{General theory}
  The operator theory upon which this paper is based,
  when converted to the language of  LMIs,
  contains a theory of matricial relaxations of a given LMI
  set and thus provides a general framework containing 
   the tightening
  methods described in Section \ref{sec:tight}.

  Suppose $S\subseteq \mathbb R^g$
  is an LMI set; i.e., suppose
  there is a monic linear pencil $\Lambda$ 
  such that  $S=\cD_\Lambda(1)$.   The collection 
  $\mathcal L_S$  of all monic 
  linear pencils $L$ with $S=\cD_L(1)$ is naturally ordered
  by inclusion. Namely,  if $L,M\in\mathcal L_S$, then $L\ge M$
  if,  $\cD_L(n)\subset \cD_M(n)$ for every positive integer $n$.
  If $\Lambda^\prime$ is also a monic linear pencil and
   $S\subset S^\prime =\cD_{\Lambda^\prime}$, then 
  the matricial inclusion $\cD_M \subset \cD_{\Lambda^\prime}$
  implies the inclusion $\cD_L \subset \cD_{\Lambda^\prime}$.
  Thus, the pencil $L$ gives 
  at least as good a test for the inclusion $S\subset S^\prime$,
  than does $M$.  Similarly, if $M^\prime, L^\prime\in\mathcal L_{S^\prime}$
  and $L^\prime \ge M^\prime$, then $M^\prime$ gives at least as good a 
  test for the inclusion of $S$ into $S^\prime$ as does $L^\prime$.

  If $\mathcal L_S$ has a maximal element, denote it by $L_{\max}$
  and similarly for $L_{\min}$.  Generally, 
  $\mathcal L_S$ will not have either a minimal or maximal element; 
  however, it turns out for the matrix cube there
  is a minimal element. See Proposition 
  \ref{prop:nem-is-worst} below.  Further, in general by  dropping 
  the requirement that the pencils $L$ have matrix coefficients
  and instead allowing for  operator coefficients, it is possible
  to prove that $L_{\max}$ and $L_{\min}$ exists.  (The discussion
  in \cite{Pau} on max and min operator space structures
  is easily seen to carry over to the present setting.)


  Thus, though typically not practical, 
  the matricial relaxation  for the inclusion 
  of the set $S=\cD_{\Lambda}(1)$ into $T=\cD_{\Lambda^\prime}$  
  based upon using $L_{\max}$ in place of $\Lambda$
  produces the exact answer; 
  whereas the matricial relaxation based upon 
  $\cD_{L_{\min}}$ produces the most conservative estimate
  over all possible matricial relaxations.

  \subsubsection{$L_{\min}$ and $L_{\max}$  for the matrix cube}
\label{ss:min-max}
From the next proposition it follows that
for the matrix cube $\cD_{\cC_1}(1)$ the minimal pencil
 $L_{\min}$ is $\cC_1$.

\begin{prop}
 \label{prop:nem-is-worst}
   If $M$ is a monic linear pencil and $\cD_M(1)\subseteq \cD_{\cC_\rho}(1)$,
   then $\cD_M \subseteq \cD_{\cC_\rho}$. In particular, 
   if $\cD_M(1)=\cD_{\cC_\rho}(1)$, and $L$ is a monic linear pencil 
   for which $\cD_{\cC_\rho} \subseteq \cD_L$, then $\cD_M \subseteq \cD_L$.
   Hence if $\cD_M(1)=\cD_{\cC_1}(1)$, then the inclusion
   $\cD_M\subset \cD_L$ is at  least as good a test for 
   $\cD_L$ to contain the unit cube as the inclusion 
   $\cD_{\cC_1}\subset \cD_L$. 
\end{prop}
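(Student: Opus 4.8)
The plan is to prove the first sentence — that $\cD_M(1)\subseteq\cD_{\cC_\rho}(1)$ already forces the full matricial inclusion $\cD_M\subseteq\cD_{\cC_\rho}$ — and then to obtain the ``in particular'' and ``hence'' clauses by merely chaining inclusions. For the first sentence I would exploit the explicit description of the matricial cube recorded just above the statement, $\cD_{\cC_\rho}=\{X\mid\|X_i\|\le\rho\text{ for all }i\}$, together with a ``compression to a single vector'' argument of the same flavour as those in the proofs of Lemma~\ref{lem:23lin} and Proposition~\ref{prop:boundedn}. The point special to the cube is that the constraint $\|X_i\|\le\rho$ is detected by one-dimensional compressions.

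Concretely: write $M(x)=I+\sum_{j=1}^g A_jx_j$, fix $n\in\N$ and $X=(X_1,\dots,X_g)\in\cD_M(n)$, so $M(X)\succeq0$. Suppose toward a contradiction that $\|X_i\|>\rho$ for some index $i$. Since $X_i$ is symmetric, its operator norm equals its numerical radius, so there is a unit vector $v\in\R^n$ with $|\langle X_iv,v\rangle|>\rho$. Let $V\colon\R\to\R^n$ be the isometry $r\mapsto rv$ and put $Z:=(\langle X_1v,v\rangle,\dots,\langle X_gv,v\rangle)\in\R^g$. A direct computation gives
\[
  (I_d\otimes V)^*\,M(X)\,(I_d\otimes V)=I_d\otimes(V^*V)+\sum_{j=1}^g A_j\,\langle X_jv,v\rangle=M(Z),
\]
using that $V^*V=1$ since $v$ is a unit vector. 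This is a compression of the positive semidefinite matrix $M(X)$, hence $M(Z)\succeq0$; that is, $Z\in\cD_M(1)$. But then $Z\in\cD_M(1)\subseteq\cD_{\cC_\rho}(1)=[-\rho,\rho]^g$, contradicting $|Z_i|>\rho$. So $\|X_i\|\le\rho$ for every $i$, i.e. $X\in\cD_{\cC_\rho}$; as $n$ and $X$ were arbitrary, $\cD_M\subseteq\cD_{\cC_\rho}$.

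The remaining assertions are then formal. If $\cD_M(1)=\cD_{\cC_\rho}(1)$, then in particular $\cD_M(1)\subseteq\cD_{\cC_\rho}(1)$, so $\cD_M\subseteq\cD_{\cC_\rho}$ by the above; composing with the hypothesis $\cD_{\cC_\rho}\subseteq\cD_L$ yields $\cD_M\subseteq\cD_L$. Specializing to $\rho=1$ and rereading: whenever the matricial inclusion $\cD_{\cC_1}\subseteq\cD_L$ holds — the certificate, via the matricial relaxation, that $\cD_L$ contains the unit cube $\cD_{\cC_1}(1)$ — the matricial inclusion $\cD_M\subseteq\cD_L$ holds as well; and since $\cD_M(1)=\cD_{\cC_1}(1)$, the test ``$\cD_M\subseteq\cD_L$?'' is a relaxation of the very same scalar inclusion $\cD_{\cC_1}(1)\subseteq\cD_L(1)$, and it succeeds at least as often. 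This is exactly the claimed comparison.

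I do not expect a genuine obstacle: the whole content sits in the one-vector compression identity above, and the only points needing (routine) care are the verification of that identity — with attention to the monic term, where $V$ being an isometry is what keeps $I_d\otimes(V^*V)=I_d$ — and the already-recorded equality $\cD_{\cC_\rho}=\{X\mid\|X_i\|\le\rho\}$. If one prefers to avoid invoking equality of numerical radius and operator norm, one can instead take $v$ to be a norm-attaining eigenvector of the symmetric matrix $X_i$, so that $\langle X_iv,v\rangle$ is an eigenvalue of modulus $\|X_i\|>\rho$; the rest of the argument is unchanged.
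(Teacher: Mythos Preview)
Your proof is correct and follows essentially the same approach as the paper: both arguments compress $M(X)$ by a rank-one isometry $I_d\otimes V$ (equivalently, test against vectors $f\otimes v$) to obtain $M(Z)\succeq0$ with $Z_j=\langle X_jv,v\rangle$, then invoke the scalar hypothesis $\cD_M(1)\subseteq\cD_{\cC_\rho}(1)$ and the fact that the numerical radius of a symmetric matrix equals its norm. The only cosmetic difference is that you phrase it as a proof by contradiction while the paper argues directly that $|\langle X_jv,v\rangle|\le\rho$ for every unit $v$.
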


\begin{proof}
  Write $M=I- \sum A_j x_j$. The condition 
  $\cD_M(1)\subseteq \cD_{\cC_\rho}(1)$ implies, if
\[
  \frac{1}{\rho} \sum A_j x_j  \preceq I,
\quad\text{for some } x_j\in\R,
\]
  then $|x_j|\le 1$ for each $1\le j \le g$. 

  Now suppose that $X=(X_1,\dots,X_g)\in (\SRnn)^g$ and
\[
  \frac{1}{\rho} \sum A_j \otimes X_j \preceq \rho I.
\]
  For each vector $f$ and unit vector $x$ (of the appropriate sizes),
  it follows that
\[
  \frac{1}{\rho} \sum \langle A_j f,f\rangle 
           \langle X_j x, x\rangle \le \|f\|^2. 
\]
  With $x$ fixed, varying $f$ shows that
\[
   \frac{1}{\rho} \sum A_j \langle X_j x,x \rangle \preceq I.
\]
  It now follows that $| \langle X_j x,x \rangle |\le 1$ 
  for each $j$ and unit vector $x$. Hence, $\|X_j\|\le 1$
  for each $j$ and hence $X\in\cD_{\cC_\rho}(n)$
  and the proof is complete.  
\end{proof}

  We have not computed $L_{\max}$
  for the matrix cube ($g>2$ variables), but we have for the matrix square 
  ($g=2$ variables)
  and found it to be a pencil with operator (infinite dimensional)
  coefficients. We do not give the calculation in this paper,
  rather we content ourselves with the simple,
  and natural,  example below 
  which suffices to show that  there are in fact choices
  of $M$ in Proposition \ref{prop:nem-is-worst} which do
  lead to improved estimates for the matrix cube
  problem and  that $L_{\min}$ and $L_{\max}$ 
  are different for the cube.
  Of course, any such improved estimate comes with additional
  computational cost; and, because it is in only two variables
 (where solving four LMIs gives the exact answer), 
  the example is purely illustrative.

 Given $\eta=(s,t)\in\R^2$ with $s^2+t^2 =1$, let
$$
  A_1(\eta) = \begin{bmatrix} s & 0 \\ 0 & -s\end{bmatrix}, \quad
  A_2(\eta) =\begin{bmatrix} 0 & t\\ t & 0 \end{bmatrix},
  \qquad \quad
  L_\eta = I  + \sum A_j(\eta) x_j.
 $$
  Recall, a unitary matrix which is symmetric 
 is called a {\bf signature matrix}.  
  Up to scaling the $A_j(\eta)$ are 
  signature matrices, and 
  further  $(A_1(\eta)\pm A_2(\eta))^2 =I$.
 
  It is straightforward to show that  $\cD_{L_\eta}(1)$
  contains the unit square; i.e.,
  $\cD_{\cC_1}(1)\subset \cD_{L_\eta}(1)$.
  Hence, with   $M_\eta =\cC_1 \oplus L_\eta,$ 
\[ 
  \cD_{M_\eta}(1)= \cD_{\cC_1}(1)
\]
  and at the same time,
\begin{equation}
 \label{ex-eta}
  \cD_{M_\eta} \subset \cD_{\cC_1}. 
\end{equation}
  On the other hand, for $\eta\ne (\pm 1,0)$
  or $(0,\pm 1)$, it is possible to check by hand 
  that $\cD_{\cC_1}(n)\not\subseteq \cD_{L_\eta}(n)$ 
  for each $n\ge 2$.    Indeed, 
  let $X=(X_1,X_2)$  denote the tuple of $2\times 2$ matrices  
\[
 X_1 = \begin{bmatrix} 1 & 0 \\
 0 & -1
\end{bmatrix}, \quad \text{ and }\quad
X_2= \begin{bmatrix}
 0 & 1 \\
 1 & 0
\end{bmatrix}.
\]
  Then $(X_1,X_2)\in \cD_{\cC_1}(2)$, but
  $(X_1,X_2) \notin\cD_{L_\eta}(2)$. 
  Hence, the inclusion 
  in equation \eqref{ex-eta} is proper. 

  Another way to see the inclusion is proper is to verify that 
  the extreme points $X=(X_1,X_2)$  of $\cD_{\cC_1}(n)$ are exactly the
  pairs of $n \times n $
  signature matrices $X_1,X_2$.
  On the other hand, 
    for $\eta\not \in \{(\pm 1,0),(0,\pm 1)\}$,
  the extreme points $X=(X_1,X_2)$ of $\cD_{\cC_1}(n)$ which are also  in 
   $\cD_{L_\eta}(n)$  are precisely the  pairs of $n \times n $
  signature matrices $X_1,X_2$ which commute.

\begin{example}[Example \ref{ex:nonScalar3} revisited]\label{ex:nonScalar4}
Recall the pencil $\Gamma$ from Example \ref{ex:nonScalar}. 
In
Example \ref{ex:nonScalar3} we  employed the matricial matrix
cube relaxation $\cD_{\cC_\rho}\subset\cD_{\Gamma}$ 
to obtain a lower bound of $\frac 12$ for 
the biggest square inside $\cD_{\Gamma}(1)$. 
To tighten the relaxation we direct sum $L_\eta$ to $\cC_\rho$ to 
obtain a linear pencil $M_\eta$. Hand calculations
for this problem tell us that
with $\eta=( \sqrt 2/2, \sqrt 2/2)$ we obtain the exact 
relaxation $\cD_{M_\eta}\subseteq \cD_{\Gamma}$.
However, suppose we did not know this and ask:
 will selecting  $\eta$ without much care
  give a reasonable improvement?

We made  100 runs with random $\eta$ and
considered the inclusion
$\rho \cD_{M_\eta}\subset\cD_{\Gamma}$
for each $\eta$
and found the average value for 
$\rho$ 
to be approximately $0.6$.
This is a considerable improvement over $0.5$
obtained in the untightened problem.
\end{example}

\section{Positivstellens\"atze on a matricial LMI set}
\label{sec:pos}

We give an algebraic characterization of symmetric
polynomials $p$ in noncommuting variables
with the property that $p(X)$ is positive definite
for all $X$ in a bounded matricial LMI set $\cD_L$.
The conclusion of 
this  Positivstellensatz is stronger than previous ones
because of the stronger hypothesis that  $\cD_L$ is an LMI set.
If the polynomial $p$ is linear, then an algebraic characterization is
given by Theorem \ref{thm:intro1}.
We shall  use the linear Positivstellensatz, Corollary \ref{cor:tau},
to prove
that the quadratic module associated to a
monic linear pencil $L$ with bounded
$\cD_L$ is archimedean.
Thereby we obtain a Putinar-type Positivstellensatz \cite{Put} without the
unpleasant added ``bounding term''.
In this section, for simplicity of presentation we stick to
polynomials on the free $*$-algebra. Later in Section \ref{sec:morePosSS}
we give this improved type of Positivstellensatz on  general $*$-algebras,
a few examples being commuting variables, free variables
and free symmetric variables (this section).
The material here is motivated by the
study of positivity of matrix polynomials in \emph{commuting}
variables undertaken in \cite{KlSw}; see also \cite{HM}.

To state and prove our next string of results, we need to introduce
notation pertaining to words and polynomials in 
noncommuting variables.

\subsection{Words and NC polynomials}\label{subsec:NCpoly}
Given positive integers $n,d,d^\prime$ and $g$,
  let $\Mdd$ denote the $d^\prime \times d$ matrices
  with real entries and $(\Mnns)^{g}$
  the set of $g$-tuples of real $n\times n$ matrices.

We write $\ax$ for the monoid freely
generated by $\x=(x_1,\ldots, x_g)$, i.e., $\ax$ consists of {\bf words} in the $g$
noncommuting
letters $x_{1},\ldots,x_{g}$
(including the empty word $\emptyset$ which plays the role of the identity $1$).
Let $\R\ax$ denote the associative
$\R$-algebra freely generated by $\x$, i.e., the elements of $\R\ax$
are polynomials in the noncommuting variables $\x$ with coefficients
in $\R$. Its elements are called {\bf (NC) polynomials}.
An element of the form $aw$ where $0\neq a\in \R$ and
$w\in\ax$ is called a {\bf monomial} and $a$ its
{\bf coefficient}. Hence words are monomials whose coefficient is
$1$.
Endow $\R\ax$ with the natural {\bf involution}
fixing $\R\cup\{\x\}$ pointwise.
The involution reverses words. For example, 
$(3-  2 x_{1}^2 x_{2} x_{3})^\T =3  -2 x_{3} x_{2} x_{1}^2.$

\subsubsection{NC matrix polynomials}
More generally, 
for an abelian group $R$ we use $R\langle x\rangle$
  to denote the abelian
  group of all (finite)
  sums of {\bf monomials} in $\ax$.
Besides $R=\R$, the most important example is $R=\Mdd$
giving rise to NC matrix polynomials.
If $d'=d$, i.e., $R=\Rdd$, then $R\ax$ is an algebra, 
and admits an involution
fixing $\{x\}$ pointwise and being the usual transposition
on $\Rdd$.
We also use $*$ to denote the canonical mapping
$\R^{d'\times d}\ax\to\R^{d\times d'}\ax$.

A {\bf matrix NC polynomial} is an NC polynomial with matrix coefficients, i.e., an element of $\Mdd\ax$ for some $d',d\in\NN$.

\subsubsection{Polynomial evaluations}

If $p\in\Mdd\ax$ is an NC polynomial and $X\in\left(\Mnns\right)^{g}$,
the evaluation $p(X)\in\R^{d'n\times dn}$ is defined by simply replacing $x_{i}$ by $X_{i}$.
For example, if $p(x)=A x_{1} x_{2}$, where
$$
A=\begin{bmatrix}
-4&2\\
3&0
\end{bmatrix},
$$
then
$$
p\left( \begin{bmatrix}
0&1 \\
1 & 0
\end{bmatrix},
\begin{bmatrix}
1&0\\
0&-1
\end{bmatrix}
\right)=
A\otimes \left( \begin{bmatrix}
0&1 \\
1 & 0
\end{bmatrix}
\,
\begin{bmatrix}
1&0\\
0&-1
\end{bmatrix}
\right)=
\begin{bmatrix}
 0 & 4 &  0 & -2 \\
 -4 & 0 &  2 & 0 \\
 0 & -3 & 0 & 0 \\
 3 & 0 &  0 & 0 \\
\end{bmatrix}.
$$
Similarly, if $p(x)=A$ and $X\in\left(\Mnns\right)^{g}$,
then $p(X)=A\otimes I_n$.

Most of our evaluations will be on tuples of \emph{symmetric}
matrices $X\in\left(\SRnn\right)^{g}$; our involution
fixes the variables $\x$ element-wise, so only these evaluations
give rise to $*$-representations of NC polynomials.

\subsection{Archimedean quadratic modules and a Positivstellensatz}

In this subsection we use the linear Positivstellensatz
(Corollary \ref{cor:tau})
to prove
that linear pencils with bounded LMI sets give rise to archimedean
quadratic modules. This is then used to prove
a (nonlinear) Positivstellensatz for matrix
NC polynomials positive (semi)definite on bounded 
matricial LMI sets.

\begin{thm}\label{thm:pos}
Suppose $L\in\SRdd\ax$ is a monic linear pencil
and $\cD_\cL$ is bounded. Then for every symmetric polynomial
$f\in\R^{\ell\times\ell}\langle x\rangle$ with
$f|_{\cD_\cL}\succ0$, there are
$A_j\in\R^{\ell\times\ell}\langle x\rangle$,
and $B_k\in\R^{d\times\ell}\langle x\rangle$
satisfying
\beq\label{eq:pos1}
f=\sum_j A_j^*A_j + \sum_k B_k^* \cL B_k.
\eeq
\end{thm}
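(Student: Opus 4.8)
The plan is to show that the quadratic module
\[
 M_L := \Big\{ \sum\nolimits_j A_j^* A_j + \sum\nolimits_k B_k^* L\, B_k : A_j\in\R^{\ell\times\ell}\langle x\rangle,\ B_k\in\R^{d\times\ell}\langle x\rangle \Big\}
\]
is \emph{archimedean}, i.e.\ that $N^2 I_\ell - \big(\sum_{j=1}^g x_j^2\big)I_\ell \in M_L$ for some $N\in\NN$, and then to feed this into the standard GNS/separation argument underlying noncommutative Putinar-type theorems (cf.~\cite{HM,KlSw}). Once archimedeanity is in hand, the desired conclusion \eqref{eq:pos1} is literally the assertion $f\in M_L$, so the archimedean step is where the real content lies.

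For the archimedean step I would argue as follows. Since $\cD_L$ is bounded there is an $M$ with $\|X_j\|\le M$ for all $X\in\cD_L$ and all $j$, hence $\sum_j X_j^2\preceq gM^2\,I$ on $\cD_L$; with $N:=\sqrt g\,M$ this says $\cD_L\subseteq\cD_{\cJ_N}$, where $\cJ_N(x)=\frac1N\begin{bmatrix} N & x^*\\ x & NI_g\end{bmatrix}$ is the monic pencil from Lemma \ref{lem:boundedn0}. Applying the Linear Positivstellensatz (Corollary \ref{cor:tau}) to $L_1=L$ and $L_2=\cJ_N$ gives matrices $W_1,\dots,W_\mu\in\R^{d\times(g+1)}$ with
\[
 \begin{bmatrix} N & x^*\\ x & NI_g\end{bmatrix} = N\,\cJ_N(x) = \sum_{j=1}^\mu W_j^*\,L(x)\,W_j .
\]
Conjugating by the polynomial column $C$ with $C^*=\begin{bmatrix} 1 & -\tfrac1N x^*\end{bmatrix}$, a one-line computation gives
\[
 C^*\begin{bmatrix} N & x^*\\ x & NI_g\end{bmatrix}C = N - \tfrac1N\sum_{j=1}^g x_j^2 ,
\]
whence $N-\tfrac1N\sum_j x_j^2 = \sum_j (W_jC)^* L\,(W_jC)\in M_L$; multiplying by $N$ and tensoring with $I_\ell$ yields $N^2 I_\ell - (\sum_j x_j^2)I_\ell\in M_L$, as wanted. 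All data stay real because Corollary \ref{cor:tau} does.

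Finally, for the Positivstellensatz itself, suppose $f\notin M_L$. Because $M_L$ is archimedean with order unit $I_\ell$, Hahn--Banach separation produces a unital $\R$-linear functional $\lambda$ on $\R^{\ell\times\ell}\langle x\rangle$ that is nonnegative on $M_L$ and satisfies $\lambda(f)\le0$; the GNS construction attached to $\lambda$ then yields a Hilbert space with cyclic vector $\xi$ and self-adjoint operators $\hat X_1,\dots,\hat X_g$, where archimedeanity forces the $\hat X_j$ to be bounded and $L(\hat X)\succeq0$ while $\langle f(\hat X)\xi,\xi\rangle\le0$. Compressing $\hat X$ to the finite-dimensional subspace spanned by $\{w(\hat X)\xi : |w|\le\deg f\}$ gives a tuple $X^{(0)}$ of symmetric matrices of some finite size $n$ with $\langle f(X^{(0)})\xi,\xi\rangle\le0$, and $X^{(0)}\in\cD_L(n)$ since $L$ is monic (this is exactly the compression argument of Lemma \ref{lem:23lin}); this contradicts $f|_{\cD_L}\succ0$. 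Hence $f\in M_L$, which is \eqref{eq:pos1}. The main obstacle is the archimedean step — it is the one place where the complete-positivity machinery of Section \ref{sec:arv} enters, via Corollary \ref{cor:tau} — while the remaining separation/GNS argument is routine, the only delicate points being to keep all scalars real and to invoke the matrix-valued version of the noncommutative Putinar theorem (as in \cite{HM,KlSw}).
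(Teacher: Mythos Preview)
Your proposal is correct and follows essentially the same route as the paper: first use Corollary~\ref{cor:tau} applied to the ball pencil $\cJ_N$ to prove that $M_{\{L\}}^\ell$ is archimedean (Proposition~\ref{prop:linArch}), then run the standard Eidelheit--Kakutani separation plus GNS argument and finish with the finite-dimensional compression of Proposition~\ref{prop:23}. Your archimedean computation is in fact marginally cleaner than the paper's---you conjugate $N\cJ_N$ by the single column $C=\begin{bmatrix}1\\ -\tfrac1N x\end{bmatrix}$ to land directly in $M_{\{L\}}^1$, whereas the paper conjugates by the full block-triangular $\begin{bmatrix}1&0\\ -\tfrac1N x& I_g\end{bmatrix}$ inside $M_{\{L\}}^{g+1}$ and then invokes Lemma~\ref{lem:linArch}; the only detail you glossed over is that in the $\ell\times\ell$ GNS construction the ``cyclic vector'' is really the $\ell$-tuple $e=\oplus_i e_i$ (so the truncation space in your last paragraph must be spanned by $w(\hat X)e_i$ for all words $w$ of length $\le\deg f$ and all $i$), exactly as in Proposition~\ref{prop:23}.
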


\begin{cor}\label{cor:pos}
Keep the assumptions of Theorem {\rm\ref{thm:pos}}.
Then
for a symmetric polynomial
$f\in\R^{\ell\times\ell}\langle x\rangle$
the following are equivalent:
\ben[\rm (i)]
\item
$f|_{\cD_\cL}\succeq0$;
\item
for every $\ep>0$ there are
$A_j\in\R^{\ell\times\ell}\langle x\rangle$,
and $B_k\in\R^{d\times\ell}\langle x\rangle$
satisfying
\beq\label{eq:pos2}
f +\eps=\sum_j A_j^*A_j + \sum_k B_k^* \cL B_k.
\eeq
\een
\end{cor}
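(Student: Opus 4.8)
The plan is to obtain Corollary~\ref{cor:pos} as an immediate consequence of Theorem~\ref{thm:pos} via a standard perturbation-and-limit argument; there is essentially no new content.

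For the implication (ii)~$\Rightarrow$~(i), I would fix $X\in\cD_\cL$, so $X\in(\SRnn)^g$ for some $n$ with $\cL(X)\succeq0$, and evaluate the identity \eqref{eq:pos2} at $X$:
\[
f(X)+\eps I=\sum_j A_j(X)^*A_j(X)+\sum_k B_k(X)^*\cL(X)B_k(X)\succeq0,
\]
each term on the right being positive semidefinite (the second sum using $\cL(X)\succeq0$). Hence $f(X)\succeq-\eps I$ for every $\eps>0$, and letting $\eps\downarrow0$ gives $f(X)\succeq0$. Since $X$ was an arbitrary point of $\cD_\cL$, this yields $f|_{\cD_\cL}\succeq0$.

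For (i)~$\Rightarrow$~(ii), I would fix $\eps>0$ and apply Theorem~\ref{thm:pos} to the symmetric matrix polynomial $f+\eps:=f+\eps I_\ell\in\R^{\ell\times\ell}\langle x\rangle$. The key (and only) point to check is that $f+\eps$ is strictly positive definite on $\cD_\cL$: for any $X\in\cD_\cL$ we have $f(X)\succeq0$, whence $(f+\eps)(X)=f(X)+\eps I\succeq\eps I\succ0$. Now the hypotheses of Theorem~\ref{thm:pos} are met ($L$ is a monic linear pencil with $\cD_\cL$ bounded, and $f+\eps$ is symmetric and strictly positive on $\cD_\cL$), so it supplies $A_j\in\R^{\ell\times\ell}\langle x\rangle$ and $B_k\in\R^{d\times\ell}\langle x\rangle$ with $f+\eps=\sum_jA_j^*A_j+\sum_kB_k^*\cL B_k$, which is exactly \eqref{eq:pos2}. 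As $\eps>0$ was arbitrary, (ii) follows.

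I do not expect any genuine obstacle: the whole argument reduces to the one-line observation that shifting by $\eps I_\ell$ converts nonnegativity on $\cD_\cL$ into strict positive definiteness there, making Theorem~\ref{thm:pos} directly applicable, together with a trivial evaluation-and-limit in the other direction.
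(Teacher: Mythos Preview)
Your proposal is correct and follows exactly the same approach as the paper: the paper's proof is the two-line observation that (ii)~$\Rightarrow$~(i) is obvious, and that for (i)~$\Rightarrow$~(ii) one notes $f+\eps|_{\cD_\cL}\succ0$ and applies Theorem~\ref{thm:pos}. Your write-up simply spells out the details of both directions more explicitly.
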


\begin{proof}
Obviously, (ii) $\Rightarrow$ (i). Conversely, if (i) holds,
then $f+\ep|_{\cD_\cL}\succ0$ and we can apply Theorem \ref{thm:pos}.
\end{proof}

We emphasize that convexity of $\cD_L$ implies 
concrete bounds on the size of the matrices $X\in\cD_L$ 
that need to be plugged into $f$ to check whether $f|_{\cD_\cL}\succ0$:

\begin{prop}[cf.~\protect{\cite[Proposition 2.3]{HM}}]\label{prop:23}
Let $L\in\SRdd\ax$ be a linear pencil with $\cD_L$ bounded, and let
$f=f^*\in\Mnns\ax$ be of degree $m$. 
Set $s:=n \sum_{j=0}^m g^j$.
Then:
\ben[\rm (1)]
\item
$f|_{\cD_L} \succ0$ if and only if $f|_{\cD_L(s)} \succ0$;
\item
$f|_{\cD_L} \succeq0$ if and only if $f|_{\cD_L(s)} \succeq0$.
\een 
\end{prop}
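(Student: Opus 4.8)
The plan is to reduce the "if and only if" to a dimension-counting / compression argument in the same spirit as Lemma~\ref{lem:23lin}. The nontrivial direction is ($\Leftarrow$): assuming $f|_{\cD_L(s)}\succeq 0$ (resp. $\succ 0$), deduce positivity on all of $\cD_L$. So suppose for contradiction that there is an $\ell\in\N$, a tuple $X\in\cD_L(\ell)$, and a vector $v\in(\R^\ell)^{\oplus N}$ (so that $f(X)\in\R^{N\ell\times N\ell}$ acts on $(\R^\ell)^{\oplus N}$) with $\langle f(X)v,v\rangle\le 0$ (resp. $<0$). Write $v=\oplus_{k=1}^N v_k$ with $v_k\in\R^\ell$. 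The idea is that $\langle f(X)v,v\rangle$ only sees the action of $X$ on the finite-dimensional subspace of $\R^\ell$ generated by applying all words in $X_1,\dots,X_g$ of length $\le m$ to the vectors $v_1,\dots,v_N$, together with the vectors themselves.

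First I would define
\[
\cK:=\span\big\{\, w(X)v_k \ \mid\ w\in\ax,\ \deg w\le m,\ k=1,\dots,N \,\big\}\subseteq\R^\ell.
\]
Since the number of words of length $\le m$ in $g$ letters is $\sum_{j=0}^m g^j$ and there are $N$ vectors, $\dim\cK\le N\sum_{j=0}^m g^j=s$. Let $P$ be the orthogonal projection of $\R^\ell$ onto $\cK$, and set $Y:=PXP=(PX_1P,\dots,PX_gP)$, which lies in $(\SR^{\dim\cK\times\dim\cK})^g$, hence after padding (or directly, since $\dim\cK\le s$ and $\cD_L$ is closed under compression to smaller sizes via Lemma~\ref{lem:23lin}-type reasoning) in $\cD_L(s)$: indeed $L(PXP)=(I\otimes P)L(X)(I\otimes P)\succeq 0$ because $L(X)\succeq 0$. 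The key computation is that for every word $w$ with $\deg w\le m$ and every $k$, one has $w(Y)Pv_k=Pw(X)v_k$; this follows by induction on $\deg w$, using that $\cK$ is invariant enough — precisely, $X_i\cK\subseteq \cK'$ where $\cK'$ is the span with words of length $\le m+1$, but since we only ever apply at most $m$ letters to the $v_k$, each intermediate vector $w'(X)v_k$ with $\deg w'\le m-1$ lies in $\cK$, so $Pw'(X)v_k=w'(X)v_k$ and hence $PX_iw'(X)v_k=PX_iPw'(X)v_k$. Consequently $f(Y)(Pv)=Pf(X)v$ where $Pv:=\oplus_k Pv_k$ — actually more carefully $\langle f(Y)(Pv),Pv\rangle=\langle f(X)v,v\rangle$ since all the relevant vectors $w(X)v_k$ with $\deg w\le m$ lie in $\cK$ and are therefore fixed by $P$. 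This yields $\langle f(Y)(Pv),Pv\rangle=\langle f(X)v,v\rangle\le 0$ (resp. $<0$) with $Y\in\cD_L(s)$, contradicting $f|_{\cD_L(s)}\succeq 0$ (resp. $\succ 0$). The ($\Rightarrow$) direction is trivial since $\cD_L(s)\subseteq\cD_L$.

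The main obstacle — really the only point requiring care — is getting the degree bookkeeping exactly right in the identity $w(Y)Pv_k=Pw(X)v_k$ for $\deg w\le m$: one must check that every proper sub-word appearing in the inductive expansion has length strictly less than $m$ so that the corresponding vector still lies in $\cK$ and is fixed by $P$. This is why $\cK$ is defined with words up to length $m$ (not $m-1$): the outermost letter of a length-$m$ word $w=x_i w'$ is applied to $w'(X)v_k\in\cK$, and then we only need $Pf(X)v$, i.e. the coefficient-weighted sum of $w(X)v_k$ for $\deg w\le m$, all of which are in $\cK$. Everything else — linearity, the matrix-coefficient version (handle $f\in\Mnns\ax$ by treating the matrix coefficients as acting on the $\R^n$-factor, which commutes with all the $P$'s since $P$ acts only on the $\R^\ell$ evaluation factor) — is routine. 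This is the matrix-polynomial analogue of \cite[Proposition~2.3]{HM} and the argument there transfers verbatim once the compression $PXP\in\cD_L(s)$ is justified by convexity/positivity of $L$, exactly as in Lemma~\ref{lem:23lin}.
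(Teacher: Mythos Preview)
Your proposal is correct and follows essentially the same compression argument as the paper: define $\cK$ as the span of $w(X)v_k$ over words $w$ of length $\le m$, project with $P$, observe $\dim\cK\le s$ and $PXP\in\cD_L$, and verify $\langle f(PXP)v,v\rangle=\langle f(X)v,v\rangle$. The paper's proof is terser (it simply asserts the last identity), whereas you spell out the inductive bookkeeping, but the route is identical.
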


\begin{proof}
In both statements the direction $(\Rightarrow)$ is obvious.
If $f|_{\cD_L} \not\succ0$, there is an $\ell$, $X\in\cD_L(\ell)$ and
$v=\oplus_{j=1}^n v_j \in (\R^\ell)^n$ with 
$$\langle f(X)v,v\rangle\leq0.$$

Let 
$$
\cK:=\{ w(X) v_j \mid w\in\ax \text{ is of degree}\leq m,\; j=1,\ldots,n\}.
$$
Clearly, $\dim \cK \leq n \sum_{j=0}^m g^j=s$.
Let $P$ be the orthogonal projection of $\R^{\ell}$ onto $\cK$. Then
$$
\langle f(PXP)v,v \rangle = \langle f(X)v,v \rangle \leq0.
$$
Since $PXP\in\cD_L(s)$, this proves (1). The proof of (2) is
the same.
\end{proof}

The crucial step in proving Theorem \ref{thm:pos}
is observing that the quadratic module generated by $\cL$ in
$ \R^{\ell\times\ell}\langle x\rangle$ is archimedean.
This is essentially a consequence of
Corollary \ref{cor:tau}, i.e., of the linear Positivstellensatz
as we now demonstrate.

\begin{definition}
Let $\cA$ be a ring with involution $a\mapsto a^*$
and set $\sym\cA:=\{a\in\cA\mid a=a^*\}$.
A subset $M\subseteq\sym\cA$ is called a {\bf quadratic module} in $\cA$
if
$$1\in M,\quad M+M\subseteq M\quad\text{and}\quad a^*Ma\subseteq M\text{\ for all $a\in\cA$.}$$
\end{definition}

\newcommand{\axstar}{\langle x\rangle}
\newcommand{\fastar}[1]{\R^{ {#1}\times{#1} }\langle x\rangle}
We will be mostly interested in the case $\cA=\fastar{\ell}$.
In this case given a subset $S\subseteq \sym\fastar d$, the quadratic
module $M_S^\ell$ generated by $S$ in $\fastar{\ell}$ is the
smallest subset of $\sym\fastar {\ell}$ containing 
all $a^* s a$ for $s\in S\cup\{1\}$, $a\in \R^{d\times\ell}\axstar$,
and closed under addition:
$$
M_S^\ell=
\Big\{ \sum_{i=1}^N a_i^*s_i a_i\mid N\in\NN,\, s_i\in S\cup \{1\},\, a_i\in\R^{d\times\ell}\ax\Big\}.
$$
This notion extends naturally to quadratic modules generated by
$S\subseteq\bigcup_{d\in\NN}\sym\fastar d$.

\begin{definition}
A quadratic module $M$ of a ring with involution $\cA$ is 
{\bf archimedean} if
\begin{equation}\label{eq:arch}
\forall a\in\cA \;\exists N\in\NN:\; N-a^*a \in M.
\end{equation}
To a quadratic module $M\subseteq\sym\cA$ we associate its
{\bf ring of bounded elements}
$$
H_M(\mathcal A):=\{a\in\cA\mid  \exists N\in\NN:\; N-a^*a \in M\}.
$$
A quadratic module $M\subseteq\sym\cA$ is thus archimedean if
and only if $H_M(\cA)=\cA$.
\end{definition}

The name \emph{ring} of bounded elements
is justified by the following proposition:

\begin{prop}[Vidav \cite{Vid}]\label{prop:vidav}
Let $\cA$ be an $\R$-algebra with involution, and
$M\subseteq\sym\cA$ a quadratic module. Then $H_M(\cA)$ is a subalgebra
of $\cA$ and is closed under the involution.
\end{prop}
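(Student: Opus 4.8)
The plan is to verify, one property at a time, that $H_M(\cA)$ contains $0$ and $1$ and is closed under multiplication by reals, under addition, under multiplication, and under the involution; only the last of these is subtle. Three soft consequences of the axioms ($1\in M$, $M+M\subseteq M$, $c^*Mc\subseteq M$) will be used throughout: since $\cA$ is an $\R$-algebra, $\mu\cdot1=(\sqrt\mu)^*\,1\,\sqrt\mu\in M$ for every real $\mu\ge0$ and $t^{-1}m\in M$ whenever $m\in M$ and $t>0$; $b^*b\in M$ for every $b\in\cA$, in particular $b^2\in M$ when $b=b^*$; and if $N-a^*a\in M$ then $(N+t)-a^*a\in M$ for all $t\ge0$, so a ``witness'' $N$ may be enlarged freely. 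In particular $0,1\in H_M(\cA)$.

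With these in hand the first three closure properties are routine. For scalars, $r^2N-(ra)^*(ra)=r^2(N-a^*a)\in M$, so $ra\in H_M(\cA)$. For sums, if $N_1-a^*a,\,N_2-b^*b\in M$, the parallelogram identity $(a+b)^*(a+b)+(a-b)^*(a-b)=2a^*a+2b^*b$ together with $(a-b)^*(a-b)\in M$ yields $2(N_1+N_2)-(a+b)^*(a+b)=2(N_1-a^*a)+2(N_2-b^*b)+(a-b)^*(a-b)\in M$, so $a+b\in H_M(\cA)$. For products, conjugating $N_a-a^*a\in M$ by $b$ gives $N_a\,b^*b-(ab)^*(ab)=b^*(N_a-a^*a)b\in M$, and adding $N_a(N_b-b^*b)\in M$ produces $N_aN_b-(ab)^*(ab)\in M$, so $ab\in H_M(\cA)$.

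The real work is closure under $*$: $H_M(\cA)$ is defined via $a^*a$ and nothing in the axioms directly controls $aa^*$. Here I would start from $N-a^*a\in M$, conjugate by $a$ on the outside to get $a(N-a^*a)a^*=Nk-k^2\in M$ with $k:=aa^*$, and then complete the square, $Nk-k^2=\tfrac{N^2}{4}-\bigl(k-\tfrac N2\bigr)^2$, so that $\tfrac{N^2}{4}-h^2\in M$ for the symmetric element $h:=k-\tfrac N2$. The proof then reduces to the sub-lemma: \emph{a symmetric $h$ with $\mu^2-h^2\in M$ for some $\mu>0$ satisfies $\mu\pm h\in M$.} To see this, add $(\mu-h)^2=\mu^2-2\mu h+h^2\in M$ to $\mu^2-h^2\in M$ to obtain $2\mu^2-2\mu h\in M$, hence $\mu(\mu-h)\in M$ and then $\mu-h\in M$; the same argument with $-h$ gives $\mu+h\in M$. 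Taking $\mu=\tfrac N2$ delivers $N-aa^*=\tfrac N2-h\in M$, i.e.\ $a^*\in H_M(\cA)$ with the same witness $N$. Putting the five items together shows $H_M(\cA)$ is an involution-closed subalgebra. I expect the completing-the-square reduction to the symmetric sub-lemma --- essentially the assertion ``$h^2\le\mu^2\Rightarrow-\mu\le h\le\mu$'' valid already inside the quadratic module --- to be the one step needing an idea; the rest is bookkeeping with the axioms.
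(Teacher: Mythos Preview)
Your proof is correct. The paper itself does not supply a proof of this proposition: it is stated with attribution to Vidav \cite{Vid} and used immediately, so there is no in-paper argument to compare against. Your route---the parallelogram identity for sums, conjugation for products, and for closure under $*$ the conjugation $a(N-a^*a)a^*=Nk-k^2$ followed by completing the square and the sub-lemma ``$\mu^2-h^2\in M\Rightarrow \mu\pm h\in M$'' for symmetric $h$---is the standard argument and all steps are justified by the quadratic-module axioms together with the observation that $\R_{\ge 0}\subseteq M$ in an $\R$-algebra. One cosmetic remark: since the definition of $H_M(\cA)$ requires the witness $N$ to be a natural number, after the scalar and sum steps you should explicitly round up (e.g.\ replace $r^2N$ or $2(N_1+N_2)$ by the next integer), which you have already prepared for with your ``a witness may be enlarged freely'' observation.
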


Hence it suffices to check
the archimedean condition \eqref{eq:arch} on a set of algebra generators.

\begin{lemma}\label{lem:arch}
A quadratic module $M\subseteq\fastar{\ell}$ is archimedean if and only if
there exists $N\in\NN$ with $N-x^*x=N-\sum_ix_i^2\in M$.
\end{lemma}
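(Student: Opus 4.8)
The plan is to derive the lemma from Vidav's Proposition \ref{prop:vidav}, which turns the (global) archimedean property into the (finite) task of exhibiting enough elements in the ring of bounded elements. Throughout put $\cA=\fastar{\ell}$. The forward implication is immediate: if $M$ is archimedean then $H_M(\cA)=\cA$, so in particular each variable $x_i$ (read as $x_iI_\ell\in\cA$) lies in $H_M(\cA)$, i.e.\ there is $N_i\in\NN$ with $N_i-x_i^2=N_i-x_i^*x_i\in M$; since $M+M\subseteq M$, summing over $i$ gives $N-\sum_i x_i^2\in M$ with $N:=\sum_i N_i$.

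For the converse I would assume $N-\sum_i x_i^2\in M$ for some $N\in\NN$ and argue that $H_M(\cA)=\cA$. By Proposition \ref{prop:vidav}, $H_M(\cA)$ is a $*$-subalgebra of $\cA$, and $\cA=\fastar{\ell}$ is generated as an $\R$-algebra by the scalar matrices $\R^{\ell\times\ell}$ together with the variables $x_1,\dots,x_g$; so it suffices to place all of these in $H_M(\cA)$. For the variables: fix $i$, note each $x_j^2=x_j^*\,1\,x_j\in M$, hence $\sum_{j\ne i}x_j^2\in M$, and adding this to $N-\sum_j x_j^2\in M$ yields $N-x_i^2\in M$, so $x_i\in H_M(\cA)$. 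For the scalar matrices: given $A\in\R^{\ell\times\ell}$ choose $N'\in\NN$ with $N'\ge\|A\|^2$; then $N'I_\ell-A^*A$ is a positive semidefinite real matrix, hence $N'I_\ell-A^*A=C^*C$ for some $C\in\R^{\ell\times\ell}$, and $C^*C=C^*\,1\,C\in M$, so $A\in H_M(\cA)$. Therefore $H_M(\cA)=\cA$, i.e.\ $M$ is archimedean.

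There is no real obstacle here; the only point needing a little care is the bookkeeping in the converse, namely remembering that one must verify $\R^{\ell\times\ell}\subseteq H_M(\cA)$ and not merely $x_i\in H_M(\cA)$, and that this uses nothing beyond the elementary fact that a positive semidefinite real matrix is a single square $C^*C$ (so no appeal to the hypothesis $N-\sum_i x_i^2\in M$ is needed for the scalar generators). Everything else is a direct unwinding of the definitions of quadratic module and of $H_M(\cA)$.
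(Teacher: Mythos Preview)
Your proof is correct and follows essentially the same route as the paper: reduce to Vidav's Proposition~\ref{prop:vidav} and verify that a generating set of $\fastar{\ell}$ lies in $H_M(\cA)$. The only cosmetic difference is that the paper uses the matrix units $E_{ij}$ as the scalar generators (checking $1-E_{ij}^*E_{ij}=\sum_{k\ne j}E_{kk}^*E_{kk}\in M$), whereas you treat all of $\R^{\ell\times\ell}$ at once via the factorization $N'I_\ell-A^*A=C^*C$; both arguments accomplish the same thing.
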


\begin{proof}
The ``only if'' direction is obvious. For the converse, observe
that $\fastar{\ell}$ is generated as an $\R$-algebra by $\x$ and
the $\ell\times\ell$ matrix units $E_{ij}$, $i,j=1,\ldots,\ell$. By assumption,
$$
N-x_i^2= (N-\sum_ix_i^2 ) + \sum_{j\neq i}x_j^2 \in M,
$$
so $x_j\in H_M(\fastar{\ell})$ for every $j$.
On the other hand, $E_{ij}^*E_{ij}= E_{jj}$ and
thus
$$1-E_{ij}^*E_{ij}=\sum_{k\neq j} E_{kk}^*E_{kk} \in M.$$
Hence by Proposition \ref{prop:vidav}, $H_M(\fastar{\ell})=
\fastar{\ell}$ so $M$ is archimedean.
\end{proof}

We are now in a position to give our crucial observation.

\begin{prop}\label{prop:linArch}
Suppose $L\in\mbS\R^{d\times d}\ax$ is a monic linear pencil
and $\cD_\cL$ is bounded.
Then the quadratic module $M_{ \{\cL\}}^\ell$ generated
by $\cL$ in $\fastar{\ell}$ is archimedean.
\end{prop}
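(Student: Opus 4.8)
The plan is to reduce, via Lemma \ref{lem:arch}, to exhibiting a single $N\in\NN$ with $N-\sum_i x_i^2\in M_{\{\cL\}}^\ell$, and to manufacture such a certificate from the linear Positivstellensatz. First I would use boundedness of $\cD_\cL$: as observed in the proof of Lemma \ref{lem:boundedn0}, there is an $N\in\NN$ with $\cJ_N|_{\cD_\cL}\succeq0$, i.e. $\cD_\cL\subseteq\cD_{\cJ_N}$, where
\[
\cJ_N(x)=\frac1N\begin{bmatrix}N & x^*\\ x & NI_g\end{bmatrix}=\begin{bmatrix}1 & \tfrac1N x^*\\ \tfrac1N x & I_g\end{bmatrix}
\]
is the monic pencil of Lemma \ref{lem:boundedn0}. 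Since $\cL$ and $\cJ_N$ are monic and $\cD_\cL$ is bounded, Corollary \ref{cor:tau} applies to the pair $(\cL,\cJ_N)$ and yields $\mu\in\NN$ and real matrices $V_1,\dots,V_\mu\in\R^{d\times(g+1)}$ with $\cJ_N(x)=\sum_{j=1}^\mu V_j^*L(x)V_j$; the point of invoking the real form of Corollary \ref{cor:tau} is precisely to keep the $V_j$ over $\R$.

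Next I would extract scalar data from this representation. Writing $V_j=\begin{bmatrix}v_j & W_j\end{bmatrix}$ with first column $v_j\in\R^{d\times1}$ and with $w_{j,i}\in\R^{d\times1}$ the $i$-th column of $W_j$, comparison of the $(1,1)$, $(1,i{+}1)$ and $(i{+}1,i{+}1)$ entries of $\cJ_N(x)=\sum_jV_j^*L(x)V_j$ gives, for $i=1,\dots,g$,
\[
\sum_{j=1}^\mu v_j^*L(x)v_j=1,\qquad \sum_{j=1}^\mu v_j^*L(x)w_{j,i}=\frac{x_i}{N},\qquad \sum_{j=1}^\mu w_{j,i}^*L(x)w_{j,i}=1,
\]
and taking adjoints of the middle identity (using $x_i^*=x_i$ and $L(x)^*=L(x)$) also yields $\sum_j w_{j,i}^*L(x)v_j=x_i/N$.

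The heart of the argument is then the elementary observation that, for each fixed $i$,
\[
\sum_{j=1}^\mu\bigl(v_jx_i-Nw_{j,i}\bigr)^*L(x)\bigl(v_jx_i-Nw_{j,i}\bigr)
\]
collapses, upon expanding and substituting the four identities above, to $N^2-x_i^2$ (the two cross terms contribute $-2x_i^2$, which overcancels the $+x_i^2$ coming from $v_j^*L(x)v_j$). Since each summand has the form $a_j^*L(x)a_j$ with $a_j\in\R^{d\times1}\langle x\rangle$, this shows $N^2-x_i^2\in M_{\{\cL\}}^1$, and summing over $i=1,\dots,g$ gives $gN^2-\sum_i x_i^2\in M_{\{\cL\}}^1$. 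Finally I would lift this to $M_{\{\cL\}}^\ell$ using the routine fact that $p\mapsto pI_\ell$ carries $M_{\{\cL\}}^1$ into $M_{\{\cL\}}^\ell$: a generator $a^*s\,a$ (with $a\in\R^{d\times1}\langle x\rangle$, $s\in\{\cL,1\}$) is sent to $\sum_{k=1}^\ell(ae_k^*)^*s\,(ae_k^*)$, because $(ae_k^*)^*s(ae_k^*)=(a^*s\,a)\,e_ke_k^*$ and $\sum_k e_ke_k^*=I_\ell$. Thus $gN^2I_\ell-\sum_i x_i^2I_\ell\in M_{\{\cL\}}^\ell$, and Lemma \ref{lem:arch} concludes that $M_{\{\cL\}}^\ell$ is archimedean.

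I expect no serious obstacle once Corollary \ref{cor:tau} is in hand: the only genuinely delicate ingredient is having a representation of $\cJ_N$ through $\cL$ with \emph{real} coefficients, and that is exactly what the matricial (hence real) linear Positivstellensatz supplies. The remaining work is bookkeeping; the one place requiring care is the correct identification of the $(1,1)$, $(1,i{+}1)$, $(i{+}1,i{+}1)$ blocks of $\cJ_N$ together with the sign in the collapsing computation, which must come out so that the cross terms double-cancel, leaving $N^2-x_i^2$.
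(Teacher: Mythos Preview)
Your proof is correct and follows the same overall strategy as the paper: apply the linear Positivstellensatz (Corollary~\ref{cor:tau}) to the inclusion $\cD_\cL\subseteq\cD_{\cJ_N}$, and then perform a Schur-complement-type manipulation on the resulting representation of $\cJ_N$ to extract $N'-\sum_i x_i^2$ from the quadratic module.

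The only difference is in how that extraction is carried out. The paper stays at the matrix level: having $\cJ_N\in M_{\{\cL\}}^{g+1}$, it conjugates $N\cJ_N$ by the unit-triangular polynomial matrix $\bigl[\begin{smallmatrix}1&0\\-x/N&I\end{smallmatrix}\bigr]$ (using closure of the quadratic module under $*$-conjugation) to get the block diagonal $\mathrm{diag}\bigl(N,(N-\tfrac1N x^*x)I_g\bigr)\in M_{\{\cL\}}^{g+1}$, and then invokes Lemma~\ref{lem:linArch} to transfer archimedeanity to arbitrary $\ell$. You instead unpack the columns of the $V_j$ and assemble the scalar certificate $N^2-x_i^2=\sum_j(v_jx_i-Nw_{j,i})^*L(x)(v_jx_i-Nw_{j,i})$ directly in $M_{\{\cL\}}^1$, lifting to $M_{\{\cL\}}^\ell$ by hand. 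Your column-wise completion of squares is exactly the entrywise version of the paper's block conjugation, so the two arguments are really the same computation presented at different levels of abstraction; your version has the mild advantage of landing in $M_{\{\cL\}}^1$ without passing through Lemma~\ref{lem:linArch}.
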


To make the proof more streamlined we separate one easy
argument into a lemma:

\begin{lem}\label{lem:linArch}
For $S\subseteq\bigcup_{d\in\NN}\sym\fastar d$
the following are equivalent:
\ben[\rm (i)]
\item
$M_S^\ell$ is archimedean for some $\ell\in\NN$;
\item
$M_S^\ell$ is archimedean for all $\ell\in\NN$.
\een
\end{lem}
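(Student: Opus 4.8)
The plan is to use Lemma~\ref{lem:arch} as the only working criterion for archimedeanity and to shuttle that criterion between $\fastar{\ell}$ for different $\ell$, passing through the scalar case $\ell=1$ as an intermediary. The direction (ii)$\Rightarrow$(i) is immediate, so the work is in (i)$\Rightarrow$(ii): assuming $M_S^{\ell_0}$ is archimedean for a single $\ell_0\in\NN$, I want to produce, for an arbitrary $\ell\in\NN$, a bound $NI_\ell-\sum_i x_i^2I_\ell\in M_S^\ell$.

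First I would step down to $\ell=1$. By Lemma~\ref{lem:arch} there is $N\in\NN$ and a representation $NI_{\ell_0}-\sum_i x_i^2I_{\ell_0}=\sum_k b_k^*s_kb_k$ with $s_k\in S\cup\{1\}$ of some size $d_k$ and $b_k\in\R^{d_k\times\ell_0}\langle x\rangle$. Compressing both sides by the constant column $v=e_1\in\R^{\ell_0\times1}$ (which satisfies $v^*v=1$) gives $N-\sum_i x_i^2=\sum_k(b_kv)^*s_k(b_kv)\in M_S^1$, since $b_kv\in\R^{d_k\times1}\langle x\rangle$; so $M_S^1$ is archimedean. Then I would step back up to the given $\ell$. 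Writing $N-\sum_i x_i^2=\sum_k c_k^*s_kc_k\in M_S^1$ with $c_k\in\R^{d_k\times1}\langle x\rangle$, I note that for each standard row $e_t^*\in\R^{1\times\ell}$ the matrix $c_ke_t^*$ lies in $\R^{d_k\times\ell}\langle x\rangle$, and because $c_k^*s_kc_k$ is a scalar ($1\times1$) polynomial and $\sum_{t=1}^\ell e_te_t^*=I_\ell$, one has $\sum_{t=1}^\ell(c_ke_t^*)^*s_k(c_ke_t^*)=(c_k^*s_kc_k)\,I_\ell$. Summing over $k$ produces $NI_\ell-\big(\sum_i x_i^2\big)I_\ell\in M_S^\ell$, and Lemma~\ref{lem:arch} finishes the proof.

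I do not expect a genuine obstacle here; this is the routine observation that archimedeanity is stable under matrix amplification and compression. The only points requiring care are purely bookkeeping: each $s_k$ carries its own size $d_k$, so the coefficients in a representation of an element of $M_S^\ell$ must be $d_k\times\ell$ matrix polynomials; and one must check that the two operations used---compression $p\mapsto v^*pv$ by a constant isometry and the amplification $p\mapsto\sum_t e_tpe_t^*$---send generators $a^*s_ka$ to (sums of) generators of the target module, which is immediate since $v$ and $e_t$ are scalar constants acting only on the $\R^{\ell}$ (resp.\ $\R^{\ell_0}$) factor and not on the $\R^{d_k}$ factor on which $s_k$ acts. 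The conceptual content is simply that the criterion of Lemma~\ref{lem:arch} involves only the ``scalar'' element $\sum_i x_i^2$, which transfers transparently across sizes.
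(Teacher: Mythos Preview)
Your proposal is correct and takes essentially the same approach as the paper. The paper merely compresses the two steps into one: instead of first passing through $\ell=1$, it conjugates $(N-x^*x)I_{\ell_0}\in M_S^{\ell_0}$ directly by the rectangular matrix unit $E_{11}^{(\ell_0,\ell)}$ to land in $M_S^\ell$ (and then permutes the diagonal slot), which is exactly your compression-followed-by-amplification written as a single conjugation.
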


\begin{proof}
(ii) $\Rightarrow$ (i) is obvious. For the converse assume
(i) and let $p\in\NN$ be arbitrary.
By assumption, there is $N\in\NN$ with $(N-x^*x) I_\ell \in M_S^\ell$.
If $E_{ij}^{(s,q)}$ denote the $s\times q$ matrix units, then
$$
(N-x^*x) E_{11}^{(p,p)}= (E_{11}^{(\ell,p)})^* (N-x^*x) I_\ell E_{11}^{(\ell,p)}
\in M_S^p.
$$
Now using permutation matrices we see $(N-x^*x) E_{jj}^{(p,p)}\in M_S^p$
for all $j$ concluding the proof by the additivity of $M_S^p$.
\end{proof}

\begin{proof}[Proof of Proposition {\rm \ref{prop:linArch}}]
Since $\cD_\cL$ is bounded, there is $N\in\NN$ with
$N\geq \|X\|$ for all $X\in\cD_\cL$. 
Consider the
$(g+1)\times(g+1)$ monic linear 
pencil
$$
\cJ_N(x)=
\frac 1N
\begin{bmatrix}
N & x^* \\
x & N I_g
\end{bmatrix}\in\mbS\fastar{(g+1)}
.
$$
By taking Schur complements, we see $\cJ_N(X)\succeq0$ if and only if
$N- \frac 1N \sum_j X_j^2\geq 0$, i.e., if and only
if $\|X\|\leq N$. This means $\cJ_N|_{\cD_\cL}\succeq0$ and
so by Corollary \ref{cor:tau} (in the new terminology),
$\cJ_N\in M_{\{\cL\}}^{g+1}$.
Since $M_{\{\cL\}}^{g+1}$ is closed under $*$-conjugation, we obtain
$$
\begin{bmatrix} N & 0 \\ 0 & \big(N- \frac 1N x^* x\big) I_g \end{bmatrix}
 = \begin{bmatrix} 1 & 0 \\ -\frac 1Nx & 1\end{bmatrix} N \cJ_N(x)
\begin{bmatrix} 1 & 0 \\ -\frac 1Nx & 1\end{bmatrix}^* \in
M_{\{\cL\}}^{g+1}.
$$
Again, using permutation matrices leads to
$(N^2 g - x^*x )I_{g+1}\in M_{\{\cL\}}^{g+1}$.
By Proposition \ref{prop:vidav}, $M_{\{\cL\}}^{g+1}$ is archimedean.
Finally, Lemma \ref{lem:linArch} implies $M_{\{\cL\}}^{\ell}$
is archimedean.
\end{proof}

\begin{cor}\label{cor:arch}
For a monic linear pencil $L$ the following are equivalent:
\ben[\rm (i)]
\item
$\cD_\cL(1)$ is bounded;
\item
the quadratic module $M_{ \{\cL\}}^\ell$
is archimedean for some $\ell\in\NN$;
\item
the quadratic module $M_{ \{\cL\}}^\ell$
is archimedean for all $\ell\in\NN$.
\een
\end{cor}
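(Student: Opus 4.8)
The plan is to prove the cycle of implications (i) $\Rightarrow$ (iii) $\Rightarrow$ (ii) $\Rightarrow$ (i); each link is short given the machinery already assembled. For (i) $\Rightarrow$ (iii) I would first apply Proposition~\ref{prop:boundedn}(1) to pass from boundedness of the spectrahedron $\cD_\cL(1)$ to boundedness of the full matricial LMI set $\cD_\cL$. With $\cD_\cL$ bounded, Proposition~\ref{prop:linArch} applies and gives that $M_{\{\cL\}}^\ell$ is archimedean; its proof in fact establishes this for every $\ell\in\NN$ (it ends by invoking Lemma~\ref{lem:linArch}), which is precisely (iii). The implication (iii) $\Rightarrow$ (ii) is trivial.

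For (ii) $\Rightarrow$ (i), suppose $M_{\{\cL\}}^\ell$ is archimedean for some $\ell$. By Lemma~\ref{lem:arch} there is $N\in\NN$ with $(N-\sum_i x_i^2)I_\ell\in M_{\{\cL\}}^\ell$; unwinding the definition of this quadratic module, we may write
\[
\Big(N-\sum_i x_i^2\Big)I_\ell=\sum_j a_j^* a_j+\sum_k b_k^*\,\cL\, b_k,
\qquad a_j\in\R^{\ell\times\ell}\langle x\rangle,\ \ b_k\in\R^{d\times\ell}\langle x\rangle.
\]
Now evaluate at an arbitrary point $X\in\cD_\cL(1)\subseteq\R^g$, so that $\cL(X)\succeq0$. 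Each $a_j(X)^*a_j(X)$ is positive semidefinite, and writing $\cL(X)=B^*B$ shows $b_k(X)^*\cL(X)b_k(X)=(Bb_k(X))^*(Bb_k(X))\succeq0$ as well. Hence $(N-\sum_i X_i^2)I_\ell\succeq0$, which forces $\sum_i X_i^2\le N$, i.e.\ $\|X\|\le\sqrt N$. Therefore $\cD_\cL(1)$ is bounded, closing the cycle.

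I do not anticipate a real obstacle: all of the substance already sits in Proposition~\ref{prop:linArch} (and, behind it, in the linear Positivstellensatz, Corollary~\ref{cor:tau}) together with the equivalence of scalar and matricial boundedness in Proposition~\ref{prop:boundedn}. The corollary merely packages these with the elementary evaluation argument above; the only mild point requiring care is the bookkeeping of the sizes of the matrix coefficients $a_j,b_k$ when relating $M_{\{\cL\}}^\ell$ to its action on tuples of scalars.
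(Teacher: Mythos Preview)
Your proof is correct and follows essentially the same route as the paper's: the paper disposes of (iii) $\Rightarrow$ (ii) $\Rightarrow$ (i) as ``clear'' and handles (i) $\Rightarrow$ (iii) exactly as you do, via Proposition~\ref{prop:boundedn} and Proposition~\ref{prop:linArch}. Your explicit evaluation argument for (ii) $\Rightarrow$ (i) simply unpacks what the paper leaves implicit.
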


\begin{proof}
Clearly, (iii) $\Rightarrow$ (ii) $\Rightarrow$ (i). On the
other hand, (i) is equivalent to $\cD_\cL$ being bounded by Proposition
\ref{prop:boundedn}, so Proposition \ref{prop:linArch} applies and
allows us to deduce (iii).
\end{proof}

\begin{proof}[Proof of Theorem {\rm\ref{thm:pos}}; compare {\protect\cite[Proposition 4.1]{HM}}]
The statement \eqref{eq:pos1} holds if and only if
$f\in M_{\{\cL\}}^{\ell}$. Now that the archimedeanity
of the quadratic module $M_{\{\cL\}}^{\ell}$ has been
established in Proposition \ref{prop:linArch},
the proof is classical. We only list basic steps
and refer the reader to \cite{HM} for detailed proofs.

The proof is by contradiction, so assume $f\not\in M_{\{\cL\}}^{\ell}$.
Archimedeanity of $M_{\{\cL\}}^{\ell}$ is equivalent to the
existence of an order unit (also called algebraic interior point), namely
$1$, of the convex cone $M_{\{\cL\}}^{\ell}\subseteq \sym\fastar{\ell}$.
Thus the Eidelheit-Kakutani separation theorem yields a
linear map $\varphi:\fastar{\ell}\to\R$ satisfying
$$\varphi(f)\leq0 \quad \text{and}\quad
\varphi(M_{\{\cL\}}^{\ell})\subseteq\RR_{\geq0}.$$

\newcommand{\fastarO}{\R^{1\times\ell}\axstar}
Modding out $\cN:=\{f\in \fastarO\mid \varphi(p^*p)=0\}$ out of $\fastarO$ leads to a vector space $\cH_0$ and $\varphi$
induces a scalar product
$$
\langle \text{\textvisiblespace\,,\textvisiblespace}\rangle:\cH_0\times \cH_0\to\R, \quad (\bar p,\bar q)\mapsto
\varphi(q^*p).
$$

Completing $\cH_0$ with respect to this scalar product yields a
Hilbert
space $\cH$. It is nonzero since $\sum_i \langle e_i,e_i\rangle= \varphi(1)=1$,
where $e_i$ are the matrix units of $\R^{1\times\ell}$.
Let $e=\oplus e_i\in\cH^\ell$.

The induced left regular $*$-representation $\pi:\R\axstar\to
B(\cH)$ is bounded (since $M_{\{\cL\}}^{\ell}$ is archimedean).
Let $\hat X_i:=\pi(X_i)$ and $\hat X:=(\hat X_1,\ldots,\hat X_g)$.
The constructed scalar product extends naturally to $\cH^\ell$.
For every $\bar p\in \cH_0^\ell$, we have
$$
\langle  \cL(\hat X)  \bar p,\bar p \rangle
= \sum_{j,k} \langle \cL(\hat X)_{j,k} \bar p_j,\bar p_k\rangle
= \sum_{j,k} \varphi( p_k^* \cL(x)_{j,k}  p_j)=
\varphi( p^* \cL(x)  p)
\geq 0,
$$
where $p$ has been identified with a $\ell\times \ell$ matrix polynomial
and the last inequality results from $p^*\cL(x)p\in M_{\{\cL\}}^{\ell}$.
Hence $\hat X\in\cD_\cL$.
But now
$$
0\geq \varphi(f)= \langle f(\hat X) e, e\rangle >0,
$$
a contradiction.
\end{proof}

The cautious reader will have noticed that the constructed $\hat X$
leading to the contradiction was (in general) not acting on a
finite dimensional Hilbert space. However this is only a slight
technical difficulty; we refer the reader to 
Proposition \ref{prop:23} or
\cite[Proposition 2.3]{HM}
for a remedy.

\subsection{More constraints}

Additional constraints can be imposed on elements of a matricial LMI set.
Given $S\subseteq\bigcup_{d\in\N} \sym\Rdd\ax$, define
$$
\cD_S(n)=
\{X\in (\SRnn)^{\tg} 
\mid \forall s\in S:\; s(X)\succeq0\},$$
and let
$$
\cD_S=
 \bigcup_{n\in\NN} \cD_S(n)
$$
denote the (matrix) {\bf positivity domain}. Also of interest
is the {\bf operator positivity domain} 
$$
\cD_S^\infty =  
\big\{X\in \sym\fB(\cH)^{\tg} \mid \forall s\in S:\; s(X) \succeq 0\big\}.
$$
Here $\cH$ is a separable Hilbert space, and $\sym\fB(\cH)$
is the set of all  bounded symmetric operators on $\cH$.

\begin{thm}\label{thm:pos2}
Suppose $L\in\SRdd\ax$ is a monic linear pencil
and $\cD_\cL$ is bounded. 
Let $g_j\in\sym\R^{d_j\times d_j}\ax$ $(j\in\N)$
be symmetric matrix polynomials.
Then for every 
$f\in\sym\R^{\ell\times\ell}\langle x\rangle$ with
$f|_{\cD^\infty_{ \{L,\, g_j \mid j\in\N\}}}\succ0$, we have
$f\in M^\ell_{ \{L,\,g_j \mid j\in\N\}}.$
\end{thm}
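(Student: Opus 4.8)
The plan is to follow the proof of Theorem~\ref{thm:pos} almost verbatim, the point being that adjoining the constraints $g_j$ and passing from $\cD_\cL$ to the operator positivity domain costs essentially nothing. Write $M:=M^\ell_{\{L,\,g_j\mid j\in\N\}}$. First I would note that $M\supseteq M^\ell_{\{L\}}$, and since $\cD_\cL$ is bounded, $M^\ell_{\{L\}}$ is archimedean by Proposition~\ref{prop:linArch}; as any quadratic module containing an archimedean one is again archimedean, $M$ is archimedean. Equivalently, $1$ is an order unit of the convex cone $M\subseteq\sym\fastar{\ell}$.

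Now argue by contradiction: assume $f\notin M$. By the Eidelheit--Kakutani separation theorem there is a linear functional $\varphi:\fastar{\ell}\to\R$ with $\varphi(f)\le0$ and $\varphi(M)\subseteq\R_{\ge0}$, normalized so $\varphi(1)=1$. Then I would run the GNS construction exactly as in the proof of Theorem~\ref{thm:pos}: mod out $\cN=\{p\in\R^{1\times\ell}\axstar\mid\varphi(p^*p)=0\}$ and complete with respect to $(\bar p,\bar q)\mapsto\varphi(q^*p)$ to a Hilbert space $\cH\neq0$ (nonzero since $\sum_i\langle e_i,e_i\rangle=\varphi(1)=1$ for the row matrix units $e_1,\dots,e_\ell$ of $\R^{1\times\ell}$), obtain the left regular $*$-representation $\pi:\R\axstar\to B(\cH)$, which is \emph{bounded} because $M$ is archimedean, and put $\hat X_i:=\pi(x_i)$, $\hat X:=(\hat X_1,\dots,\hat X_g)$; each $\hat X_i$ is symmetric since the involution fixes the variables. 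The only new ingredient is the observation that for matrix polynomials $p$ of the appropriate sizes one has $p^*\cL(x)p\in M$ and $p^*g_j(x)p\in M$ for every $j$, so
\[
\langle\cL(\hat X)\bar p,\bar p\rangle=\varphi\big(p^*\cL(x)p\big)\ge0,\qquad
\langle g_j(\hat X)\bar p,\bar p\rangle=\varphi\big(p^*g_j(x)p\big)\ge0
\]
on the relevant dense subspaces, hence everywhere by boundedness; thus $\cL(\hat X)\succeq0$ and $g_j(\hat X)\succeq0$ for all $j$, i.e.\ $\hat X\in\cD^\infty_{\{L,\,g_j\mid j\in\N\}}$. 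With $e:=\bigoplus_i e_i\in\cH^\ell\setminus\{0\}$, the hypothesis $f|_{\cD^\infty_{\{L,\,g_j\mid j\in\N\}}}\succ0$ forces $\langle f(\hat X)e,e\rangle>0$, which contradicts $\langle f(\hat X)e,e\rangle=\varphi(f)\le0$. Hence $f\in M$.

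The only genuinely delicate point is that the GNS tuple $\hat X$ is in general an operator tuple on an infinite-dimensional $\cH$, not a matrix tuple; this is precisely why the conclusion must be phrased with $\cD^\infty$ in place of $\cD$, since the finite-dimensional reduction of Proposition~\ref{prop:23} is no longer available once arbitrary (in particular, infinitely many) constraints $g_j$ are imposed. Everything else---boundedness of $\pi$, positivity of $\cL(\hat X)$ and the $g_j(\hat X)$, and $e\neq0$---is inherited without change from the proof of Theorem~\ref{thm:pos}, the constraints $g_j$ entering solely through the fact that $\varphi$ is automatically nonnegative on each $p^*g_jp$.
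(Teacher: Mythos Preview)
Your proposal is correct and follows essentially the same approach as the paper. The paper's proof is two sentences: it observes that $M^\ell_{\{L,\,g_j\mid j\in\N\}}\supseteq M^\ell_{\{L\}}$ is archimedean and then says ``the same proof as for Theorem~\ref{thm:pos} applies''; you have simply spelled out those details, including the correct observation that the hypothesis on $\cD^\infty$ rather than $\cD$ is exactly what absorbs the infinite-dimensional GNS output without needing the reduction of Proposition~\ref{prop:23}.
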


\begin{proof}
Since the quadratic module $ M^\ell_{ \{L,\,g_j \mid j\in\N\}}\supseteq
 M^\ell_{ \{L\}}$ is archimedean, 
the same proof as for Theorem \ref{thm:pos} applies.
\end{proof}

\begin{remark}
For a particularly appealing consequence (in commuting variables)
of Theorem \ref{thm:pos2} see Section \ref{sec:comm}.
\end{remark}

We conclude this section with a Nichtnegativstellensatz.
It is a stronger form of the 
Nirgendsnegativsemidefinitheitsstellensatz
\cite{KSold} for matricial LMI sets.

\begin{cor}\label{cor:nnsd}
Let $L\in\mbS\R^{d\times d}\ax$ be a monic linear pencil
and suppose $\cD_\cL$ is bounded. 
Let $g_j\in\sym\R^{d_j\times d_j}\ax$ $(j\in\N)$
be symmetric matrix polynomials.
Then for every 
$h\in\sym\R^{\ell\times\ell}\langle x\rangle$
the following are equivalent:
\ben[\rm (i)]
\item
$h|_{\cD^\infty_{ \{L,\, g_j \mid j\in\N\}}}\not\preceq 0$, i.e., for every $($nontrivial separable$)$ 
Hilbert space $\cH$
and tuple of symmetric bounded operators $X\in\cD^\infty_{ \{L,\, g_j \mid j\in\N\}}$ on $\cH$,
there is a $v\in\cH$ with $\langle h(X)v,v\rangle>0$;
\item
there are
$D_j\in\R^{\ell\times\ell}\langle x\rangle$
satisfying
\beq\label{eq:pos4}
\sum D_j^* h D_j \in I_\ell +
M^\ell_{ \{L,\, g_j \mid j\in\N\}}.
\eeq
\een
\end{cor}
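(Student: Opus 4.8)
The plan is to reduce the statement to an ``infeasibility'' version of the Positivstellensatz already proved. First I would record two reformulations. Statement (ii) is exactly membership $-I_\ell \in M'' := M^\ell_{\{L,\, -h,\, g_j \mid j\in\N\}}$: a general element of $M''$ has the form $\sigma + m - \sum_j D_j^* h D_j$ with $\sigma=\sum_iA_i^*A_i$ a sum of squares, $m \in M^\ell_{\{L,\, g_j\mid j\in\N\}}$ and $D_j \in \R^{\ell\times\ell}\langle x\rangle$, so $-I_\ell \in M''$ says exactly $\sum_j D_j^* h D_j = I_\ell + (\sigma+m) \in I_\ell + M^\ell_{\{L,\, g_j\mid j\in\N\}}$. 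Statement (i) says precisely that there is no nonzero separable Hilbert space $\cH$ and tuple $X$ of bounded symmetric operators on $\cH$ with $L(X)\succeq0$, all $g_j(X)\succeq0$, and $h(X)\preceq0$, i.e., that $\cD^\infty_{\{L,\, -h,\, g_j\mid j\in\N\}}$ has no nonzero point. Thus the Corollary reduces to: $\cD^\infty_{\{L,\,-h,\,g_j\}}$ is trivial $\iff -I_\ell \in M''$. The structural point making this work is that $M'' \supseteq M^\ell_{\{L\}}$, which is archimedean by Proposition \ref{prop:linArch}; hence $M''$ is archimedean.

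For ``$-I_\ell \in M'' \Rightarrow$ (i)'', which is (ii)$\Rightarrow$(i): write $-I_\ell = \sum_i a_i^* s_i a_i$ with each $s_i \in \{I_\ell\}\cup\{L\}\cup\{-h\}\cup\{g_j\mid j\in\N\}$. Given $X \in \cD^\infty_{\{L,\,g_j\}}$ on a nonzero $\cH$, if $h(X)\preceq0$ then every generator evaluates at $X$ to a positive semidefinite operator, so $-I = \sum_i a_i(X)^* s_i(X) a_i(X) \succeq 0$, absurd; hence $h(X)\not\preceq0$ and there is a unit vector $v$ with $\langle h(X)v,v\rangle>0$, which is (i). (Equivalently, one may evaluate $\sum_j D_j^* h D_j = I_\ell + m$ directly at $X$.)

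The substantive direction is the converse, and here I would run verbatim the GNS/separation argument from the proof of Theorem \ref{thm:pos} (equivalently of Theorem \ref{thm:pos2}), now for the archimedean module $M''$. Assuming $-I_\ell \notin M''$: since $1 = I_\ell$ is an order unit for the convex cone $M'' \subseteq \sym\R^{\ell\times\ell}\langle x\rangle$, the Eidelheit--Kakutani theorem yields a linear functional $\varphi:\R^{\ell\times\ell}\langle x\rangle \to \R$ with $\varphi(M'')\subseteq\R_{\geq0}$ and $\varphi(I_\ell)=1$ (the normalization $\varphi(I_\ell)\neq0$ is forced using archimedeanity, exactly as in the proof of Theorem \ref{thm:pos}). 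Quotienting $\R^{1\times\ell}\langle x\rangle$ by the kernel of $(p,q)\mapsto\varphi(q^*p)$ and completing produces a separable Hilbert space $\cH$, nonzero because $\varphi(I_\ell)=1$, on which the left regular $*$-representation $\pi$ is bounded (since $M''$ is archimedean); put $\hat X_i := \pi(x_i)$, bounded and symmetric because $\pi$ is a $*$-representation. Positivity of $\varphi$ on the $L$-, $g_j$- and $-h$-parts of $M''$ gives $\langle L(\hat X)\bar p,\bar p\rangle = \varphi(p^*Lp)\geq0$ and similarly for the $g_j$ and $-h$; hence $L(\hat X)\succeq0$, $g_j(\hat X)\succeq0$ and $h(\hat X)\preceq0$, so $\hat X$ is a nonzero point of $\cD^\infty_{\{L,\,-h,\,g_j\}}$ --- that is, (i) fails.

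I expect the only real obstacle to be bookkeeping rather than a new idea: keeping the matrix sizes consistent when forming $M''$ (so that $-h$, of size $\ell$, together with the $d\times d$ pencil $L$ and the $g_j$ generate a quadratic module inside $\R^{\ell\times\ell}\langle x\rangle$), and being comfortable that the tuple $\hat X$ furnished by the GNS construction generally acts on an infinite-dimensional Hilbert space --- which is exactly why (i) is stated over the operator positivity domain $\cD^\infty$ rather than over $\cD$. A finite-dimensional version would additionally need the compression argument of Proposition \ref{prop:23}.
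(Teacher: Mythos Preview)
Your proof is correct and follows essentially the same approach as the paper: both reduce the equivalence to showing that (i) is equivalent to $-I_\ell \in M^\ell_{\{L,\,-h,\,g_j\}}$, using that this enlarged module is archimedean. The paper's proof is simply more terse --- it invokes Theorem~\ref{thm:pos2} directly with $f=-1$ and the vacuously satisfied hypothesis $f|_{\cD^\infty_{\{L,\,-h,\,g_j\}}}\succ0$ (since that domain is empty by (i)), whereas you unpack the GNS/separation argument underlying that theorem.
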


\begin{proof}
(ii) $\Rightarrow$ (i) is obvious. The converse is also
easy. 
Just apply Theorem \ref{thm:pos2} with
$f=-1$ and the positivity domain
$\cD^\infty_{ \{L,\,-h,\, g_j \mid j\in\N\}}=\emptyset.$
\end{proof}

\begin{remark}
There does not seem to exist a clean linear Nichtnegativstellensatz.
We found $4\times 4$ monic linear pencils
$L_1$, $L_2$
in nine variables with the following properties:
\ben[\rm (1)]
\item
$\cD_{L_1}$ and $\cD_{L_2}$ are bounded;
\item
$L_2|_{\cD_{L_1}}\not\preceq0$, or equivalently,
$$\left\{X\in\R^9\;\Big\vert\;\begin{bmatrix} L_1(X) & 0 \\ 0 & -L_2(X)\end{bmatrix}\succeq0\right\}=\emptyset;
$$
\item
there do not exist real matrices $U_j,V_k,W_\ell$ with
\beq\label{eq:linnsd}
\sum_j U_j^* L_2(x) U_j = I+ \sum_\ell W_\ell^* W_\ell+\sum_k V_k^* L_1(x) V_k.
\eeq
\een
By Corollary \eqref{cor:nnsd}, (1) and (2) imply that \eqref{eq:linnsd}
holds with $U_j,V_k,W_\ell\in\R^{4\times 4}\ax$.
A
Mathematica notebook with all the calculations 
is available at \url{http://srag.fmf.uni-lj.si}. 
\end{remark}

\section{More general Positivstellens\"atze}
\label{sec:morePosSS}

In this section we present two possible modifications
of our theory. First, we apply our techniques to 
commuting variables 
and derive a ``clean'' classical Putinar Positivstellensatz
on a bounded spectrahedron. This is done by adding symmetrized commutation
relations to our list of constraints. In fact we can add any symmetric
relation and get a clean Positivstellensatz on a subset of a bounded 
LMI set (this is Theorem \ref{thm:pos3}).
In Section \ref{sec:free} we also show how to deduce similar results
for nonsymmetric noncommuting variables.

\subsection{Positivstellens\"atze on an LMI set in $\R^g$}\label{sec:comm}

We adapt some of our previous definitions to commuting variables.
Let $\cy$ be the monoid freely
generated by $\y=(y_1,\ldots, y_g)$, i.e., $\cy$ consists of  words in the $g$
commuting
letters $y_{1},\ldots,y_{g}$
(including the empty word $\emptyset$ which plays the role of the identity $1$).
Let $\R\cy$ denote the commutative 
$\R$-algebra freely generated by $\y$, i.e., the elements of $\R\cy$
are polynomials in the commuting variables $\y$ with coefficients
in $\R$. 

More generally, 
for an abelian group $R$ we use $R\cy$
  to denote the abelian
  group of all $R$-linear combinations of 
words in $\cy$.
Besides $R=\R$, the most important example is $R=\Mdd$
giving rise to matrix polynomials.
If $d'=d$, i.e., $R=\Rdd$, then $R\cy$ is an $\R$-algebra, and
admits an involution
fixing $\{y\}$ pointwise and being the usual transposition
on $\Rdd$.
We also use $*$ to denote the canonical mapping
$\R^{d'\times d}\cy\to\R^{d\times d'}\cy$.
If $p\in\Mdd\cy$ is a polynomial and $Y\in\R^{g}$,
the evaluation $p(Y)\in\Mdd$ is defined by simply replacing $y_{i}$ by $Y_{i}$.

The natural map $\ax\to\cy$ is called the {\bf commutative collapse}.
It extends naturally to matrix polynomials.

   For $A_0,A_1,\dots,A_{g} \in \SRdd$,  a linear matrix polynomial
\beq\label{eq:cpencil}
  L(y)=A_0+\sum_{j=1}^{g} A_j y_j \in \SRdd\cy,
\eeq
  is a linear pencil. 
If $A_0=I$, then $L$ is monic.
If $A_0=0$, then $L$ is a truly linear pencil.
Its spectrahedron is
$$
\cD_{L}(1)=\{Y\in \R^{\tg} \mid L(Y) \succeq 0\},
$$
and for every $\ell\in\NN$, $L$ induces a quadratic module $Q_{\{L\}}^\ell$
in $\R^{\ell\times\ell}\cy$:
$$
Q_{\{L\}}^\ell =
\Big\{ 
\sum_{i=1}^N a_i^*a_i +
\sum_{j=1}^N b_j^*L b_j\mid N\in\NN,\, a_i\in \R^{\ell\times\ell}\cy, \,
b_j\in\R^{d\times\ell}\cy\Big\}.
$$

All the results on linear pencils and archimedeanity given in
Section \ref{sec:pos} carry over to
the commutative setting. For instance, given a monic linear
pencil $L\in\mbS\R^{d\times d}\cy$, we have:
\begin{enumerate}[\rm (1)]
\item
$Q_{ \{\cL\}}^\ell$
is archimedean for some $\ell\in\NN$ if and only if
$Q_{ \{\cL\}}^\ell$
is archimedean for all $\ell\in\NN$;
\item
$Q_{ \{\cL\}}^\ell$
is archimedean  if and only if
the spectrahedron $\cD_L(1)$ is bounded.
\end{enumerate}

Most importantly, we obtain the following clean version of
Putinar's Positivstellensatz \cite{Put} on a bounded spectrahedron.

\begin{thm}\label{thm:commpos}
Suppose $L\in\SRdd\cy$ is a monic linear pencil
and $\cD_\cL(1)$ is bounded. Then for every symmetric polynomial
$f\in\R^{\ell\times\ell}\cy$ with
$f|_{\cD_\cL(1)}\succ0$, there are
$A_j\in\R^{\ell\times\ell}\cy$,
and $B_k\in\R^{d\times\ell}\cy$
satisfying
\beq\label{eq:cpos1}
f=\sum_j A_j^*A_j + \sum_k B_k^* \cL B_k.
\eeq
\end{thm}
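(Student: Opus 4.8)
The plan is to reduce Theorem \ref{thm:commpos} to the noncommutative Positivstellensatz already proved in Theorem \ref{thm:pos2}, forcing commutativity by a finite list of symmetrized commutator constraints. Regard the given monic pencil $\cL$ also as an NC pencil $L=L(x)=I+\sum_j A_jx_j\in\SRdd\ax$; since the scalar spectrahedron of $L$ is $\cD_\cL(1)$, which is bounded, Proposition \ref{prop:boundedn} gives that the matricial set $\cD_L$ is bounded, so Theorem \ref{thm:pos2} applies to $L$. Let $c:\R\ax\to\R\cy$ denote the commutative collapse, extended entrywise to matrix polynomials; it is a surjective $*$-homomorphism, so every symmetric element of $\R^{\ell\times\ell}\cy$ has a symmetric preimage. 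For $1\le i<j\le g$ put
\[
 g_{ij}:=-(x_ix_j-x_jx_i)^*(x_ix_j-x_jx_i)\in\sym\R\ax ,
\]
a finite family of scalar symmetric NC polynomials. For a tuple $X$ of bounded self-adjoint operators one has $g_{ij}(X)=-T^*T\preceq0$ with $T=X_iX_j-X_jX_i$, so imposing $g_{ij}(X)\succeq0$ forces $T^*T=0$, i.e.\ $X_iX_j=X_jX_i$; hence $\cD^\infty_{\{L,\,g_{ij}\mid i<j\}}$ is exactly the set of commuting tuples of bounded self-adjoint operators $X$ with $L(X)\succeq0$.

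Next I would lift $f$ to a symmetric $\tilde f\in\R^{\ell\times\ell}\ax$ with $c(\tilde f)=f$ and check that $\tilde f|_{\cD^\infty_{\{L,\,g_{ij}\}}}\succ0$. Fix such a commuting tuple $X$ on a Hilbert space $\cH$. By the joint spectral theorem there is a joint spectral measure $E$ supported on the compact joint spectrum $\Sigma\subseteq\R^g$ with $X_k=\int y_k\,dE(y)$, and for $\xi\in\R^d$, $v\in\cH$,
\[
 \langle L(X)(\xi\otimes v),\xi\otimes v\rangle=\int_\Sigma\langle L(y)\xi,\xi\rangle\,d\mu_v(y),\qquad \mu_v:=\langle E(\cdot)v,v\rangle .
\]
From $L(X)\succeq0$ one deduces $\Sigma\subseteq\cD_\cL(1)$: if $L(Y_0)\not\succeq0$ for some $Y_0\in\Sigma$, choose $\xi$ and a small open $U\ni Y_0$ with $\langle L(y)\xi,\xi\rangle<-c<0$ on $U$; any nonzero $v\in\Ran E(U)$ then makes the integral, hence $\langle L(X)(\xi\otimes v),\xi\otimes v\rangle$, negative, a contradiction. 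Since $\cD_\cL(1)$ is closed and bounded, $f|_{\cD_\cL(1)}\succ0$ yields $f(y)\succeq\eps I_\ell$ there for some $\eps>0$, so $\tilde f(X)=f(X)=\int_\Sigma f(y)\,dE(y)\succeq\eps I\succ0$.

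Now Theorem \ref{thm:pos2} applied to $L$ and the $g_{ij}$ gives $\tilde f\in M^\ell_{\{L,\,g_{ij}\mid i<j\}}$, i.e.
\[
 \tilde f=\sum_k \tilde A_k^*\tilde A_k+\sum_k\tilde B_k^* L\,\tilde B_k+\sum_{i<j}\sum_k \tilde C_{ijk}^* g_{ij}\,\tilde C_{ijk}
\]
with $\tilde A_k\in\R^{\ell\times\ell}\ax$, $\tilde B_k\in\R^{d\times\ell}\ax$, $\tilde C_{ijk}\in\R^{1\times\ell}\ax$. Applying $c$ and using that $c$ is a $*$-homomorphism with $c(x_ix_j-x_jx_i)=0$ (so every commutator term vanishes), $c(\tilde f)=f$, and $c(L)=\cL$, we obtain $f=\sum_k A_k^*A_k+\sum_k B_k^*\cL B_k$ with $A_k:=c(\tilde A_k)\in\R^{\ell\times\ell}\cy$ and $B_k:=c(\tilde B_k)\in\R^{d\times\ell}\cy$, which is \eqref{eq:cpos1}.

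I expect the main obstacle to be the strict-positivity check in the second paragraph: for a matrix-valued pencil $L$, showing that a commuting tuple of bounded self-adjoint operators with $L(X)\succeq0$ has joint spectrum inside the compact scalar spectrahedron $\cD_\cL(1)$; the rest is bookkeeping plus the appeal to Theorem \ref{thm:pos2}, whose archimedean input is exactly what "carries over" from Section \ref{sec:pos} and ultimately rests on Corollary \ref{cor:tau}. An essentially equivalent route bypasses Theorem \ref{thm:pos2} and instead repeats the GNS/Eidelheit--Kakutani proof of Theorem \ref{thm:pos} verbatim in $\R\cy$: the commutative archimedeanity is item (2) in the list preceding the theorem, and since $\R\cy$ is commutative the GNS operators automatically commute, so the same spectral-theorem argument produces the contradiction with $f|_{\cD_\cL(1)}\succ0$.
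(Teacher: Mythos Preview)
Your proposal is correct and follows essentially the same route as the paper: lift $f$ to a symmetric NC polynomial, add the symmetrized commutator constraints $g_{ij}=-(x_ix_j-x_jx_i)^*(x_ix_j-x_jx_i)$, invoke Theorem \ref{thm:pos2}, and then apply the commutative collapse to kill the commutator terms. The only difference is that you spell out in detail the spectral-theorem verification that $\tilde f\succ0$ on $\cD^\infty_{\{L,\,g_{ij}\}}$ (showing the joint spectrum lands in $\cD_\cL(1)$), whereas the paper dispatches this step in a single sentence.
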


\begin{proof}
Let $F\in\sym\R^{\ell\times\ell}\ax$ be an arbitrary symmetric 
matrix polynomial 
in noncommuting variables 
whose commutative
collapse is $f$.
By abuse of notation, let $L\in\SRdd\ax$ be the canonical lift of 
$L\in\SRdd\cy$. Write 
$$
g_{ij}= -(x_ix_j-x_jx_i)^*(x_ix_j-x_jx_i)= (x_ix_j-x_jx_i)^2 \in \sym\R\ax
$$
for $i,j=1,\ldots,g$.
Note $g_{ij}(X)\succeq0$ if and only if $X_iX_j=X_jX_i$.
By the spectral theorem, $F|_{\cD_{\{L,\, g_{ij}\mid i,j=1,\ldots,g\}}}\succ0$.
So Theorem \ref{thm:pos2} implies and yields
$$
F\in 
M^\ell_{ \{L,\, g_{ij} \mid i,j=1,\ldots,g\}}.
$$
Applying the commutative collapse gives
$f\in Q^\ell_{\{L\}}$, as desired.
\end{proof}

\begin{cor}\label{cor:commpos4}
Suppose $L\in\SRdd\cy$ is a monic linear pencil
and $\cD_\cL(1)$ is bounded.
Let $g_1,\ldots,g_s\in\R\cy$ and
$$
\cD_L(g_1,\ldots,g_s):=\{Y\in\R^g\mid L(Y)\succeq0,\, g_1(Y)\geq0,\ldots,g_s(Y)\geq0\}.
$$
If
$f\in\R\cy$ satisfies
$f|_{\cD_\cL(g_1,\ldots,g_s)}>0$,
then there are
$h_{ij}\in\R\cy$,
and $B_k\in\R^{d\times1}\cy$
satisfying
\beq\label{eq:cpos4}
f =\sum_{j=0}^s g_j \sum_{i} h_{ij}^2+ \sum_k B_k^* \cL B_k,
\eeq
where $g_0:=1$.
\end{cor}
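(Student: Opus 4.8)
The plan is to mimic the proof of Theorem \ref{thm:commpos}: lift the data to noncommuting variables, apply the noncommutative Positivstellensatz with extra constraints (Theorem \ref{thm:pos2}), and then push the resulting identity back down via the commutative collapse. Since $f\in\R\cy$ and the $g_j$ are scalar, we work with $\ell=1$ throughout. Concretely, choose a symmetric noncommutative lift $F\in\R\ax$ of $f$ (take any lift $P$ and set $F=\tfrac12(P+P^*)$), let $L\in\SRdd\ax$ be the canonical noncommutative lift of the pencil, let $G_j\in\R\ax$ be symmetric lifts of the $g_j$, and adjoin the commutator constraints $g_{ik}=(x_ix_k-x_kx_i)^2\in\R\ax$ for $1\le i<k\le g$, exactly as in the proof of Theorem \ref{thm:commpos}; recall that $g_{ik}(X)\succeq0$ forces $X_iX_k=X_kX_i$.

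The key point is that $F$ is positive definite on the operator positivity domain $\cD^\infty_{\{L,\,G_j,\,g_{ik}\}}$. Indeed, if $X$ is a tuple of bounded symmetric operators on a separable Hilbert space lying in this domain, then the $X_i$ commute, so by the spectral theorem they admit a joint projection-valued measure $E$ on $\R^g$; from $L(X)=\int L(\lambda)\,dE(\lambda)\succeq0$ and $G_j(X)=g_j(X)=\int g_j(\lambda)\,dE(\lambda)\succeq0$ one gets that the support of $E$ lies in $\cD_L(g_1,\dots,g_s)$. This set is closed and contained in the bounded set $\cD_L(1)$, hence compact, so $f\ge\eps$ on it for some $\eps>0$; since $F(X)=f(X)=\int f(\lambda)\,dE(\lambda)$ we conclude $F(X)\succeq\eps I\succ0$. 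Therefore Theorem \ref{thm:pos2}, applied with the finite constraint family $\{L,G_1,\dots,G_s,g_{ik}\mid 1\le i<k\le g\}$ and $\ell=1$, yields
\[
F\in M^1_{\{L,G_1,\dots,G_s,g_{ik}\mid 1\le i<k\le g\}},
\]
i.e.\ $F$ is a finite sum of terms $a^*a$, $b^*Lb$, $b^*G_jb$ and $b^*g_{ik}b$ with noncommutative polynomial coefficients of the appropriate sizes.

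Finally, apply the commutative collapse $\ax\to\cy$ to this identity. Each commutator $g_{ik}$ collapses to $0$, so all the terms $b^*g_{ik}b$ vanish; a term $a^*a$ collapses to a square in $\R\cy$; a term $b^*Lb$ collapses to $B_k^*LB_k$ with $B_k\in\R^{d\times1}\cy$; and a term $b^*G_jb$ collapses to $g_j\,h^2$ with $h\in\R\cy$ (using that $g_j$ is scalar and that the involution on $\R\cy$ is the identity on scalar polynomials). Collecting terms, and writing $g_0:=1$ for the plain sum of squares, gives $f=\sum_{j=0}^s g_j\sum_i h_{ij}^2+\sum_k B_k^*LB_k$, which is exactly \eqref{eq:cpos4}.

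I expect no genuinely new difficulty here beyond what is already available: the two points needing care are the spectral-theorem step — which must be run for possibly infinite-dimensional operator tuples (hence the use of $\cD^\infty$), just as in Theorem \ref{thm:commpos}, and which uses compactness of $\cD_L(g_1,\dots,g_s)$ to upgrade pointwise strict positivity to a uniform bound $\eps>0$ — and the routine bookkeeping showing that the commutative collapse of the noncommutative quadratic-module membership has precisely the weighted sum-of-squares shape \eqref{eq:cpos4}. An equivalent route, avoiding the lift altogether, is to note that the quadratic module generated by $L$ and $g_1,\dots,g_s$ in $\R\cy$ contains the archimedean module generated by $L$ alone (archimedean because $\cD_L(1)$ is bounded), hence is itself archimedean, and then to rerun the Eidelheit--Kakutani/GNS separation argument of the proof of Theorem \ref{thm:pos} verbatim in commuting variables.
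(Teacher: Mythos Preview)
Your proof is correct and follows exactly the template the paper uses for Theorem \ref{thm:commpos}: lift to noncommuting variables, adjoin the commutator constraints, apply Theorem \ref{thm:pos2}, and collapse. The paper states Corollary \ref{cor:commpos4} without proof, so this is precisely the intended argument; your alternative route via archimedeanity of the larger commutative quadratic module is equally valid and is the other natural reading of why the corollary follows immediately.
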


\begin{cor}\label{cor:commpos3}
Suppose $L\in\SRdd\cy$ is a monic linear pencil
and $\cD_\cL(1)$ is bounded. Then for every polynomial
$f\in\R\cy$ with
$f|_{\cD_\cL(1)}> 0 $, there are
$h_j\in\R\cy$,
and $B_k\in\R^{d\times1}\cy$
satisfying
\beq\label{eq:cpos3}
f=\sum_j h_j^2 + \sum_k B_k^* \cL B_k.
\eeq
\end{cor}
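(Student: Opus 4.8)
The plan is to deduce Corollary \ref{cor:commpos3} as the scalar specialization ($\ell=1$) of Theorem \ref{thm:commpos}, which has already been proved. First I would note that $\R\cy$ is nothing but $\R^{1\times1}\cy$, so a polynomial $f\in\R\cy$ is in particular a symmetric $1\times1$ matrix polynomial, and the hypothesis $f|_{\cD_\cL(1)}>0$ is precisely the statement $f|_{\cD_\cL(1)}\succ0$ read in the $1\times 1$ case. Hence Theorem \ref{thm:commpos} applies with $\ell=1$ and produces $A_j\in\R^{1\times1}\cy=\R\cy$ and $B_k\in\R^{d\times1}\cy$ with $f=\sum_j A_j^*A_j+\sum_k B_k^*\cL B_k$.

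The only remaining point is cosmetic: the involution on $\R^{1\times1}\cy$ is the transpose on $1\times1$ matrices, hence the identity, so $A_j^*=A_j$ and therefore $A_j^*A_j=A_j^2$. Setting $h_j:=A_j$ yields the asserted representation $f=\sum_j h_j^2+\sum_k B_k^*\cL B_k$. Alternatively, one may invoke Corollary \ref{cor:commpos4} with $s=0$ (no extra scalar constraints $g_i$), in which case $\cD_L(g_1,\ldots,g_s)$ collapses to $\cD_\cL(1)$ and \eqref{eq:cpos4} becomes exactly \eqref{eq:cpos3} since $g_0=1$.

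There is essentially no obstacle here: the content of the corollary is entirely carried by Theorem \ref{thm:commpos}, whose proof in turn rests on the archimedean property of $Q_{\{\cL\}}^\ell$ established via the Linear Positivstellensatz, Corollary \ref{cor:tau}. The corollary is recorded separately only because the scalar case --- an LMI-flavored refinement of Putinar's Positivstellensatz with no bounding term --- is the form most likely to be invoked in polynomial optimization.
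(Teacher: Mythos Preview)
Your proposal is correct and matches the paper's approach: the paper gives no separate proof for Corollary~\ref{cor:commpos3}, treating it as an immediate specialization of Theorem~\ref{thm:commpos} (or equivalently of Corollary~\ref{cor:commpos4} with $s=0$) to the case $\ell=1$, exactly as you describe.
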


It is clear that a Nichtnegativstellensatz along
the lines of Corollary \ref{cor:nnsd} holds in this setting.
We leave the details to the reader.

\subsection{Free (nonsymmetric) variables}\label{sec:free}

In this section we explain how our theory adapts to
the free $*$-algebra. 
Let $\axs$ be the monoid freely
generated by $\x=(x_1,\ldots, x_g)$ and 
$\xs=(x_1^*,\ldots,x_g^*)$, i.e., $\axs$ consists of  words in the $2g$
noncommuting
letters $x_{1},\ldots,x_{g},x_1^*,\ldots,x_g^*$
(including the empty word $\emptyset$ which plays the role of the identity $1$).
Let $\C\axs$ denote the 
$\C$-algebra freely generated by $\x,\xs$, i.e., the elements of $\C\axs$
are polynomials in the noncommuting variables $\x,\xs$ with coefficients
in $\C$. 
As before, we introduce matrix polynomials $\C^{d'\times d}\axs$.
If $p\in\CMdd\axs$ is a polynomial and $X\in(\Cnn)^{g}$,
the evaluation $p(X,X^*)\in\CMdd$ is defined by simply replacing $x_{i}$ by $X_{i}$ and $x_i^*$ by $X_i^*$.

   For $A_1,\dots,A_{g} \in \Cdd$,  a linear matrix polynomial
\beq\label{eq:cpencilC}
  L(x)=\sum_{j=1}^{g} A_j x_j \in \Cdd\ax,
\eeq
  is a truly linear pencil. (Note: none of the variables
$\xs$ appears in such an $L$.) 
Its {\bf monic symmetric pencil} is 
$$
\ccL(x,x^*)= I + L(x) + L(x)^* = I + \sum_{j=1}^{g} A_j x_j +
 \sum_{j=1}^{g} A_j^* x_j^*
\in\sym\Cdd\axs.
$$
The associated matricial LMI set is
$$
\cD_{\ccL}=\bigcup_{n\in\N} \{X\in (\Cnn)^{\tg} \mid \ccL(X) \succeq 0\},
$$
its operator-theoretic counterpart is
$$
\cD_{\ccL}^\infty= \{X\in \fB(\cH)^{\tg} \mid \ccL(X) \succeq 0\},
$$
and for every $\ell\in\NN$, $\ccL$ induces a 
quadratic module $M_{\{\ccL\}}^\ell$
in $\C^{\ell\times\ell}\axs$:
$$
M_{\{\ccL\}}^\ell =
\Big\{ 
\sum_{i=1}^N a_i^*a_i +
\sum_{j=1}^N b_j^*\ccL b_j\mid N\in\NN,\, a_i\in \C^{\ell\times\ell}\axs, \,
b_j\in\C^{d\times\ell}\axs\Big\}.
$$

Like in the previous subsection,
all our main results from 
Section \ref{sec:pos} carry over to
this free setting. 
As a sample, we give a Positivstellensatz:

\begin{thm}\label{thm:pos3}
Suppose $\ccL\in\sym\Cdd\axs$ is a monic symmetric linear pencil
and $\cD_\ccL$ is bounded. 
Let $g_j\in\sym\C^{d_j\times d_j}\axs$ $(j\in\N)$
be symmetric matrix polynomials.
Then for every 
$f\in\sym\C^{\ell\times\ell}\axs$ with
$f|_{\cD^\infty_{ \{\ccL,\,g_j \mid j\in\N\}}}\succ0$, we have
$f\in M^\ell_{ \{\ccL,\,g_j \mid j\in\N\}}.$
\end{thm}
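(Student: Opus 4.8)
The plan is to mimic the proof of Theorem \ref{thm:pos} verbatim, the only new ingredient being the observation that the quadratic module $M^{\ell}_{\{\ccL\}}$ generated by a \emph{monic symmetric} linear pencil $\ccL$ with bounded $\cD_{\ccL}$ is archimedean in $\C^{\ell\times\ell}\axs$. Once this is established, the Hahn--Banach / GNS machinery goes through without change, since $\C^{\ell\times\ell}\axs$ is a unital $*$-algebra over $\C$ and the only place finite-dimensionality of the variables was used (in Theorem \ref{thm:pos}) was circumvented via an operator positivity domain --- which is precisely why the hypothesis here is stated in terms of $\cD^{\infty}_{\{\ccL,\,g_j\mid j\in\N\}}$ rather than $\cD_{\{\ccL,\,g_j\mid j\in\N\}}$.

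First I would reduce to the archimedeanity statement. By Lemma \ref{lem:arch} (whose proof copies over, using the complex matrix units $E_{ij}$ and the fact that $\C^{\ell\times\ell}\axs$ is generated as a $\C$-algebra by $x_1,\dots,x_g,x_1^*,\dots,x_g^*$ and the $E_{ij}$), it suffices to produce an $N\in\N$ with $N-\sum_j x_j^*x_j-\sum_j x_j x_j^* \in M^{\ell}_{\{\ccL\}}$; and by the analogue of Lemma \ref{lem:linArch} it is enough to do this for one value of $\ell$. For the archimedean bound itself I would use boundedness of $\cD_{\ccL}$ to fix $N$ with $\|X\|\le N$ for all $X\in\cD_{\ccL}$, form the monic symmetric pencil $\ccL'(x,x^*)=\begin{bmatrix} I & L(x) \\ L(x)^* & I\end{bmatrix}$ associated to $\cB_{\ccL}$ (cf.~\eqref{lcl}), and compare $\ccL$ with the truly-$*$ analogue of $\cJ_N$, namely the monic symmetric pencil whose LMI set is $\{X\mid \|x\|\le N\}$, i.e.\ a pencil $\cK_N$ of size roughly $g+1$ with $\cK_N|_{\cD_{\ccL}}\succeq0$ (Schur complement: $\cK_N(X)\succeq0 \iff N^2 \succeq \sum X_jX_j^*+\sum X_j^*X_j$, up to the usual normalisation). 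Then I would invoke the \emph{complex} form of the linear Positivstellensatz --- here one does \emph{not} need the real-variables refinement, so Arveson extension plus Stinespring (as sketched right after Corollary \ref{cor:tau}) directly gives $\cK_N = V^*(I_\mu\otimes \ccL)V$, hence $\cK_N\in M^{\mu'}_{\{\ccL\}}$ for a suitable $\mu'$ --- and conjugate/shuffle by permutation matrices exactly as in the proof of Proposition \ref{prop:linArch} to land $(N'-\sum x_j^*x_j - \sum x_jx_j^*)I\in M^{\mu'}_{\{\ccL\}}$. Adding the $g_j$ only enlarges the module, so $M^{\ell}_{\{\ccL,\,g_j\mid j\in\N\}}\supseteq M^{\ell}_{\{\ccL\}}$ is still archimedean.

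Having archimedeanity, I would finish exactly as in the proof of Theorem \ref{thm:pos}: assume $f\notin M^{\ell}_{\{\ccL,\,g_j\mid j\in\N\}}$; since $1$ is an order unit of this cone in $\sym\C^{\ell\times\ell}\axs$, the Eidelheit--Kakutani separation theorem produces a $\C$-linear functional $\varphi$ (real-valued on symmetric elements) with $\varphi(f)\le0$ and $\varphi(M^{\ell}_{\{\ccL,\,g_j\mid j\in\N\}})\subseteq\R_{\ge0}$; build the GNS Hilbert space $\cH$ by modding out the null space of $(p,q)\mapsto\varphi(q^*p)$ on $\C^{1\times\ell}\axs$; archimedeanity makes the left-regular representation $\pi$ bounded, so $\hat X_j:=\pi(x_j)$ are bounded operators with $\hat X_j^*=\pi(x_j^*)$ (here is where nonsymmetric variables are actually used, but cause no trouble), and positivity of $\varphi$ on $b^*\ccL b$ and $b^* g_j b$ forces $\hat X\in\cD^{\infty}_{\{\ccL,\,g_j\mid j\in\N\}}$; then $0\ge\varphi(f)=\langle f(\hat X)e,e\rangle>0$ with $e=\oplus e_i\in\cH^{\ell}$, a contradiction.

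The main obstacle is purely bookkeeping: getting the right ``ball pencil'' $\cK_N$ over $\C$ and checking that $\cK_N|_{\cD_{\ccL}}\succeq0$ reduces, via a Schur complement, to the norm bound $\|X\|\le N$ --- and then tracking the permutation/shuffle conjugations that extract the scalar archimedean element $N'-\sum x_j^*x_j-\sum x_jx_j^*$ from the matrix-valued membership $\cK_N\in M_{\{\ccL\}}$. None of this is conceptually hard; it is the faithful complex-coefficient transcription of Proposition \ref{prop:linArch}, and I would simply state that the proofs of Lemma \ref{lem:arch}, Lemma \ref{lem:linArch}, Proposition \ref{prop:linArch} and Theorem \ref{thm:pos} carry over \emph{mutatis mutandis}, spelling out only the construction of $\cK_N$ and the fact that over $\C$ one may use Arveson--Stinespring directly.
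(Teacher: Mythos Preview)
Your proposal is correct and matches the paper's approach: the paper does not give an explicit proof of Theorem \ref{thm:pos3} but simply states that ``all our main results from Section \ref{sec:pos} carry over to this free setting,'' and your plan is precisely the transcription of Proposition \ref{prop:linArch} and Theorem \ref{thm:pos} to $\C^{\ell\times\ell}\axs$, with the (paper-sanctioned) shortcut of invoking Arveson--Stinespring directly over $\C$ in place of the real Choi-matrix argument. If anything, you supply more detail than the paper does.
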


As a special case we obtain a Positivstellensatz describing
polynomials positive definite on \emph{commuting} tuples $X$ in a matricial LMI set. (Note: we are not assuming the entries $X_i$ commute with
the adjoints $X_j^*$.)

\begin{cor}\label{cor:pos3}
Suppose $\ccL\in\sym\Cdd\axs$ is a monic symmetric linear pencil
and $\cD_\ccL$ is bounded. 
Suppose $f\in\sym\C^{\ell\times\ell}\axs$ satisfies
$f(X,X^*)\succ0$ for all $X\in\cD^\infty_\ccL$ with
$X_iX_j=X_jX_i$ for all $i,j$.
\ben[\rm (1)]
\item
Let $c_{jk}=x_jx_k-x_kx_j$. 
Then
$$f\in M^\ell_{ \{\ccL, \,c_{jk}+c_{jk}^*,\, i (c_{jk}-c_{kj}) \mid j,k=1,\ldots,g\}}.$$
\item
Let $d_{jk}=-c_{jk}^*c_{jk}$. Then
$$f\in M^\ell_{ \{\ccL,\, d_{jk} \mid j,k=1,\ldots,g\}}.$$
\een
\end{cor}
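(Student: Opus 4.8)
The plan is to obtain both parts as immediate consequences of Theorem~\ref{thm:pos3}, the only point requiring attention being the choice of the auxiliary constraint polynomials $g_j$ so that the operator positivity domain $\cD^\infty_{\{\ccL,\,g_j\}}$ is exactly $\{X\in\cD^\infty_\ccL\mid X_iX_j=X_jX_i\text{ for all }i,j\}$. Once that identification is in place, the hypothesis that $f(X,X^*)\succ0$ on all commuting tuples in $\cD^\infty_\ccL$ reads precisely as $f|_{\cD^\infty_{\{\ccL,\,g_j\}}}\succ0$, and Theorem~\ref{thm:pos3} delivers the asserted membership in $M^\ell_{\{\ccL,\,g_j\}}$.

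I would treat part (2) first, as it is cleanest. Put $c_{jk}=x_jx_k-x_kx_j$ and $d_{jk}=-c_{jk}^*c_{jk}$; each $d_{jk}$ is symmetric, being of the form $-a^*a$. For an arbitrary tuple $X$ one has $d_{jk}(X)=-(X_jX_k-X_kX_j)^*(X_jX_k-X_kX_j)\preceq0$, so $d_{jk}(X)\succeq0$ holds if and only if $X_jX_k-X_kX_j=0$. Hence $\cD^\infty_{\{\ccL,\,d_{jk}\mid j,k\}}$ is exactly the set of commuting tuples in $\cD^\infty_\ccL$, and Theorem~\ref{thm:pos3} (whose boundedness hypothesis on $\cD_\ccL$ is assumed) yields $f\in M^\ell_{\{\ccL,\,d_{jk}\mid j,k\}}$. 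For part (1), the generators $c_{jk}+c_{jk}^*$ and $i(c_{jk}-c_{jk}^*)$ are again symmetric scalar polynomials, and jointly their nonnegativity forces $c_{jk}(X)=0$: since $c_{kj}=-c_{jk}$ as polynomials, letting $(j,k)$ run over all ordered pairs makes the list of constraints contain both $M\succeq0$ and $-M\succeq0$ for $M=c_{jk}(X)+c_{jk}(X)^*$, which forces $X_jX_k-X_kX_j$ to be skew-Hermitian, and likewise contain both $N\succeq0$ and $-N\succeq0$ for $N=i\big(c_{jk}(X)-c_{jk}(X)^*\big)$, which forces it to be Hermitian; so it vanishes. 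Thus $\cD^\infty_{\{\ccL,\,c_{jk}+c_{jk}^*,\,i(c_{jk}-c_{jk}^*)\mid j,k\}}$ is once more the commuting locus in $\cD^\infty_\ccL$, and Theorem~\ref{thm:pos3} applies verbatim.

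The only step needing genuine care is this identification of the constraint loci, in particular the bookkeeping in (1) that one must range over all \emph{ordered} index pairs in order to have each inequality together with its reverse available, and the (trivial but necessary) check that every generator used is in fact symmetric so Theorem~\ref{thm:pos3} is applicable as stated. Beyond that there is nothing deeper: the archimedean property of the quadratic module and the Eidelheit--Kakutani separation argument are already packaged inside Theorem~\ref{thm:pos3}.
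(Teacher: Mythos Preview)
The paper gives no proof for this corollary; it is left as an evident application of Theorem~\ref{thm:pos3}, in the same spirit as the proof of Theorem~\ref{thm:commpos} (where the commutator constraints $g_{ij}=-(x_ix_j-x_jx_i)^*(x_ix_j-x_jx_i)$ are used to cut out the commuting locus). Your proposal does exactly this: you choose symmetric constraint polynomials whose joint positivity locus in $\cD^\infty_\ccL$ is precisely the set of commuting tuples, and then invoke Theorem~\ref{thm:pos3}. Both the identification for part~(2) via $d_{jk}=-c_{jk}^*c_{jk}$ and the ordered-pair bookkeeping for part~(1) are correct, so the argument is complete and matches the intended approach.

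One remark: you silently (and correctly) repaired a typo. The paper's statement reads $i(c_{jk}-c_{kj})$, but since $c_{kj}=-c_{jk}$ this would give $2ic_{jk}$, which is not symmetric in $\C\axs$ and hence not an admissible constraint for Theorem~\ref{thm:pos3}. Your version $i(c_{jk}-c_{jk}^*)$ is the symmetric element that is clearly intended, and your argument goes through with it.
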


\linespread{1.1}

\end{document}